\newcounter{thm_counter}[section]
\newtheorem{lemma}[thm_counter]{Lemma}
\newtheorem{prop}[thm_counter]{Proposition}
\newtheorem{theorem}[thm_counter]{Theorem}
\newtheorem{cor}[thm_counter]{Corollary}
\newtheorem{remark}[thm_counter]{Remark}
\newtheorem{defn}[thm_counter]{Definition}
\newcounter{assumptions}
\newenvironment{proof}{\vspace{1ex}\noindent{\textsc{Proof:}}\hspace{0.5em}}{\hfill\qed\vspace{1ex}}
\numberwithin{equation}{section}
\numberwithin{thm_counter}{section}
\DeclareMathOperator{\var}{var}
\DeclareMathOperator{\col}{col}
\def\ra{\Rightarrow} 
\def\to{\rightarrow} 
\def\sw{\subseteq} 
\def\mc{\mathcal} 
\def\mb{\mathbb} 
\def\sc{\setminus} 
\def\p{\partial} 
\def\v{\mathbf} 
\def\E{\mb{E}} 
\def\P{\mb{P}}
\def\R{\mb{R}} 
\def\N{\mb{N}}
\def\~{\sim}
\def\-{\,:\,} 
\def\|{\,|\,} 
\def\wt{\widetilde}
\def\qed{$\blacksquare$}
\def\1{\mathbbm{1}}
\def\cadlag{c\`{a}dl\`{a}g}
\def\l{\left}
\def\r{\right}
\def\IH1{\textit{($IH$)$_1$}}
\begin{document}

\author{Nic Freeman}
\author{Jonathan Jordan}

\affil{School of Mathematics and Statistics, University of Sheffield}

\title{Extensive Condensation in a model of Preferential Attachment with Fitness}
\date{\today}
\maketitle

\begin{abstract}
We introduce a new model of preferential attachment with fitness,
and establish a time reversed duality between the model and a system of branching-coalescing particles.
Using this duality, we give a clear and concise explanation for the condensation phenomenon, in which unusually fit vertices may obtain abnormally high degree: it arises from a growth-extinction dichotomy within the branching part of the dual.

We show further that the condensation is extensive. As the graph grows, unusually fit vertices become, each only for a limited time, neighbouring to a non-vanishing proportion of the current graph.
\end{abstract}


\section{Introduction}

The classical model of preferential attachment is an increasing sequence of random graphs $(\mc{G}_n)$,
beginning from a finite graph $\mc{G}_0$.
To construct $\mc{G}_{n+1}$ from $\mc{G}_n$, a vertex $p_n$ is randomly sampled from $\mc{G}_n$, with the probability of picking each vertex $v$ weighted according to its degree $\deg_n(v)$. Then, a single new node is attached to $p_n$ via a single new edge.

More generally, the new node may be joined via $m$ new edges to $m$ existing nodes, each sampled independently from $\mc{G}_n$, weighted by degree and with replacement.
This model is perhaps the simplest example of a stochastic model in which earlier gains (in the form of higher degree)
confer an advantage towards future growth.
It has been studied extensively and the structure of $\mc{G}_n$ as $n\to\infty$ is well understood; see for example Chapter 8 of \cite{Hofstad2016} and the references therein.

The classical model was generalized by \cite{BianconiBarabasi2001}, with the addition of fitness values for the vertices.
A higher fitness value confers a better chance of attaching to the new incoming vertices. 
More precisely, nodes are assigned i.i.d.~fitness values $F_v\in[0,1]$, and a node $v$ with fitness $F_v$ now carries weight $F_v\deg_n(v)$ (instead of $\deg_n(v)$).
Cases in which $F_v$ has support $[0,1]$ but $\P[F_v=1]=0$ are of particular interest.
In such cases, as the graph grows large, it is possible that the vertices with fitnesses approaching $1$ will capture a macroscopic fraction of the edges -- a phenomenon known as condensation.

Using evidence from numerical simulations \citeauthor{BianconiBarabasi2001} predicted that once their graph became large \textit{`a single node captures a positive proportion of the links'} -- this is known as `extensive' condensation.
\cite{DereichEtAl2017} showed recently that extensive condensation did not, in fact, occur.

A second extension of the classical model, known as preferential attachment with choice, was studied by \cite{MalyshkinPaquette2014}.
Their model does not include fitnesses; rather, to obtain $\mc{G}_{n+1}$ from $\mc{G}_n$ a set $\{p_1,\ldots,p_R\}$ of vertices are sampled from $\mc{G}_n$, each using the same degree-weighted mechanism as in classic preferential attachment (independently, and with replacement).
A single new vertex then attaches via a single new edge to whichever
$p_i$ has the highest degree.

\citeauthor{MalyshkinPaquette2014} showed that in their model a so-called persistent hub emerges -- a single vertex $v$ which, at some random time $N$, has maximal degree (within $\mc{G}_N$) and which then remains as the vertex of maximal degree for all time. 
This property is key to their arguments. 
When $R>2$, they establish extensive condensation through showing that the degree of the persistent hub grows linearly.

In the present article we introduce a new model, which modifies the model of \cite{MalyshkinPaquette2014} to include fitnesses.
Like \cite{BianconiBarabasi2001}, we take the vertex fitnesses to be i.i.d.~values in $[0,1]$.
In our model, to obtain $\mc{G}_{n+1}$ from $\mc{G}_n$, we sample vertices $\{p_1,\ldots,p_R\}$ from $\mc{G}_n$, each using the same degree-weighted mechanism as in classic preferential attachment (independently, and with replacement).
Then, attach a single new vertex $v_n$ to whichever
$p_i$ has the highest fitness.

\medskip

We will show that, in contrast to the Bianconi-Barab\'{a}si model, in our model extensive condensation \textit{does} occur.
However, it occurs without the emergence of a persistent hub. This results in a delicate situation in which a succession of ever fitter vertices grow to eventually topple the previously dominant positions of older (and less fit) vertices.
Our model provides the first rigorous example of a preferential attachment graph with extensive condensation via such behaviour.
From hereon, let us refer to the model as PAC -- `Preferential Attachment with Choice by fitness'.

We analyse PAC using techniques which, to our knowledge, are novel to preferential attachment;
we exhibit a time-reversed duality between PAC and a system of branching-coalescing particles.
This type of duality is perhaps best known in the context of population genetics
where genealogical trees, described by branching-coalescing particles, are used to represent historical transfers of genetic information.

Note that sampling the vertex $v$ weighted according to $\deg_n(v)$ is equivalent to sampling a half-edge (in $\mc{G}_n$) uniformly at random, and then picking the associated vertex $v$.
For this reason it is advantageous to consider half-edges.
For convenience, we assign to each half-edge the same fitness as its associated vertex.
We will use genealogies to track new half-edges inheriting fitness values from pre-existing half-edges.
These genealogies will be closely connected to the duality used, in a spatial model of population genetics, by \cite{EtheridgeFreemanEtAl2017}.

In the genealogical dual process of PAC, if we suppress coalescence and consider the behaviour when the graph is large,
we obtain that the branching part of the dual approximates a Galton-Watson process,
at least when restricted to only finitely many generations.
Using this fact we will be able to give a clear and concise explanation of why (and, under what condition) condensation occurs: precisely, when this Galton-Watson process has positive probability of non-extinction.
Non-extinction corresponds to the genealogy of a new half-edge extending far backwards in time,
far enough that it has a chance of being descended from an unusually fit vertex born long ago.
Moreover, we will give an intuitive description of the limiting degree-weighted fitness measure, in terms of the number of leaves of a Galton-Watson tree.

The most involved part of the present article will be showing that condensation, when it occurs, is extensive.
Here, we require a more sensitive analysis of the dual process than the Galton-Watson approximation can provide.
We use a mixture of martingale-like calculations and weak convergence techniques,
which will permit us to observe the genealogy of a new half-edge in detail.
We are able to identify an explicit constant $\beta\in(0,1)$ such that
the fittest vertex present at time $\approx n^\beta$
will grow to neighbour an asymptotically positive proportion of $\mc{G}_n$.

\subsection{Condensation in random graphs}
\label{sec:background}

In physics, Bose-Einstein condensation is a phenomenon in which, within particular types of matter and at low temperature, a positive proportion of particles occupy the lowest quantum energy state. Such particles are known as the `condensate' and,
remarkably, their existence permits quantum effects to become visible at macroscopic scale.
Within random graphs, the term condensation was introduced by \cite{BianconiBarabasi2001}, who represented half-edges of their graph as particles within a Bose gas,
with fitnesses corresponding to energy states
-- but inverted, so as the fitness value $1$ corresponds to the zero energy state.

Phase transitions, such as that characterising the emergence of a Bose-Einstein condensate, only become sharp when the number of particles tends to infinity.
However, in this limit, there are \textit{two} natural ways in which one might define what is meant by the emergence of a Bose-Einstein condensate.
Firstly, we might ask that a macroscopic fraction of particles remain in the lowest energy state;
alternatively, we might ask that a macroscopic fraction of particles become arbitrary close to the lowest energy state.
The former definition corresponds to extensive condensation, the latter to non-extensive condensation.

More generally, condensation refers to the formation of an atom in the limit of sequence of measures.
We refer the reader to \cite{BergLewisPule1986} for further discussion of Bose-Einstein condensation,
and let us now, in the same spirit, offer a precise definition of condensation in the context of random graphs.

Consider an increasing sequence of finite graphs $(\mc{G}_n)$, with vertex and edge sets $\mc{G}_n=(\mc{V}_n,\mc{E}_n)$,
in which each vertex $v$ has a fitness value $F_v\in[0,1]$.
We define the quantities
\begin{align}
\mu_n(A)&=\frac{1}{2|\mc{E}_n|}\sum\limits_{v\in \mc{V}_n} \deg_n(v)\1_{\{F_v\in A\}}, \label{eq:mu_graph} \\
\ell_n(A)&=\frac{1}{2|\mc{E}_n|}\max\limits_{v\in\mc{V}_n}\,\deg_n(v)\1_{\{F_v\in A\}}\label{eq:ell_def}.
\end{align}
Thus, $\mu_n$ is a random probability measure on $[0,1]$ which measures the fitnesses present in $\mc{G}_n$, weighted according to degree.
The quantity $\ell_n(A)$ is not a measure; it is the proportion of half-edges in $\mc{G}_n$ that are attached to the highest degree vertex with fitness in $A$.
\begin{defn}
\label{def:condensation}
Let $B_\epsilon(a)=[a-\epsilon,a+\epsilon]\cap[0,1]$.
\begin{enumerate}
\item We say that condensation occurs at $a$ if $\lim\limits_{\epsilon\to 0}\liminf\limits_{n\to\infty}\E\l[\mu_n(B_\epsilon(a))\r]>0$.
\item We say that condensation at $a$ is extensive if $\lim\limits_{\epsilon\to 0}\liminf\limits_{n\to\infty}\E\l[\ell_n(B_\epsilon(a)\r]>0$.
\item We say that condensation at $a$ occurs around the persistent hub $v$, if $v$ is a fixed vertex with fitness $a$ such that $\liminf\limits_{n\to\infty}\E\l[\frac{1}{|\mc{E}_n|}\deg_n(v)\r]>0$.
\end{enumerate}
\end{defn}
For many models with fitness, including PAC, the weak limit $\mu_n\to \mu$ exists almost surely and the limit $\mu$ is deterministic.
In such cases condensation at $a$ is equivalent to $\mu$ possessing an atom at $a$.
Extensive condensation occurs only when the degrees of individual vertices make non-negligible contributions to the formation of this atom.
These three definitions provide qualitative measures of how strongly the structure of $\mc{G}_n$ becomes dominated by a small fraction of high degree nodes, as $n\to\infty$. Clearly, 3 $\ra$ 2 $\ra$ 1.


As we have mentioned, we are interested in models for which condensation occurs either at $a=1$ or not at all.
In such cases, condensation occurs only through a positive fraction of the half-edges appearing on ever fitter vertices.
Extensive condensation captures the more specific event that, in the limit, individual vertices become (each perhaps only for a limited time) neighbouring to a positive fraction of the graph.

\begin{remark}
From now on, we use the term condensation to mean condensation at $1$.
\end{remark}

Let us now summarise the various techniques which have been used to rigorously analyse condensation in models of preferential attachment, with particular attention given to models incorporating fitness and/or choice.
Readers familiar with this literature may wish to move directly on to Section \ref{sec:popgen}, and will not miss out on any notation by doing so.

We first recall a natural coupling between the classic preferential attachment model and an urn process.
Fix, $v_0\in \mc{G}_0$. Colour $v_0$ white and all other vertices black; pass these colours on to the associated half-edges. Now, regard each half-edge of $\mc{G}_n$ as a coloured ball within an urn $\mc{U}_n$.
From the dynamics of classic preferential attachment, the one-step dynamics of $(\mc{U}_n)$ are as follows. To obtain $\mc{U}_{n+1}$ from $\mc{U}_n$, we:
\begin{enumerate}
\item Draw a ball uniformly at random from $\mc{U}_n$ and note its colour. Return this ball to the urn.
\item Add a new black ball to the urn, and also add a new ball of the same colour as was drawn in step 1.
\end{enumerate}
Then, at all times, the number of white balls in $\mc{U}_n$ is equal to $\deg_n(v_0)$.
The new black ball corresponds to the half-edge associated to a new vertex $v$; the drawn ball corresponds to sampling the (colour of the new half-edge attached to the) vertex to which $v$ connects.
It is straightforward to extend the coupling to track the joint degree of multiple balls, using multiple colours.

The first rigorous analysis of the Bianconi-Barab\'{a}si model was provided by \cite{BorgsEtAl2007}, 
who extended the idea described above to couple the model to a generalized P\'{o}lya urn process.
In a generalized P\'{o}lya urn each colour is assigned a different activity value (in this case, given by a function of the fitness).
Crucially, these activity values weight how balls are drawn from the urn, in a way that exactly matches the fitness-dependent sampling used in the Bianconi-Barab\'{a}si model.
With this coupling in hand, \citeauthor{BorgsEtAl2007} invoked the limit theory of urns provided by \cite{Janson2004}, and showed rigorously that condensation occurred. However, this limit theory applies only when the urn has finitely many colours, meaning that discretization of the fitness values was a necessary step within the proof.

As we have mentioned, \cite{BianconiBarabasi2001} predicted extensive condensation within their model.
This prediction was shown to be false by \cite{DereichEtAl2017}, who embedded the Bianconi-Barab\'{a}si model in continuous time (a technique advocated by \citeauthor{Janson2004}) and,
having done so, viewed it as a multi-type branching process with reinforcement.
In this formulation, half-edges correspond to individuals within the branching process, and having greater fitness corresponds to being a type of individual that branches at faster rate.
Individuals with the same fitness are referred to as a family.
(In fact \citeauthor{DereichEtAl2017} considered a more general case than \citeauthor{BianconiBarabasi2001}, by including an extra parameter controlling the rate at which new edges appear between existing vertices.)

The argument given by \citeauthor{DereichEtAl2017} for non-extensive condensation proceeds via computations based on the growth rates and birth times of families, utilising the independence inherent within branching processes.
Their result requires regular variation of the fitness distribution near $1$, which covers the range of parameters of interest to \citeauthor{BianconiBarabasi2001}.
For non-regularly varying fitness distributions the behaviour is not known, but see Section 8 of \cite{DereichEtAl2017} for a discussion.

The analysis of \cite{MalyshkinPaquette2014} relies heavily on the appearance of a persistent hub within their model.
It proceeds by first showing that the number of possible persistent hubs is almost surely finite, followed by showing that for any two vertices, which one has higher degree may switch only finite many times.
These arguments rely on comparisons to classical preferential attachment (which is also known to have a persistent hub).
With this information in hand, \citeauthor{MalyshkinPaquette2014} used stochastic approximation to analyse the growth of the persistent hub, which they show to have degree of asymptotic order $n$ when $R>2$ and order $\frac{n}{\log n}$ when $R=2$.

More generally, stochastic approximation is a well established method of studying urn processes and preferential attachment models. We refer the reader to the survey article of \cite{Pemantle2007} for details.
A rather general application of stochastic approximation to an extension of the Bianconi-Barab\'{a}si model can be found in \cite{DereichOrtgiese2014}.
We will discuss the applicability of stochastic approximation to PAC in Remark \ref{rem:stoch_approx}.

Some authors have considered variants of preferential attachment with choice in which the chosen vertex is not (or is not always) the fittest or the most valent of the $R$ samples.
Examples of such models, which have typically been studied through stochastic approximation, appear in \cite{MalyshkinPaquette2015} and \cite{HaslegraveEtAl2018}. The latter includes a particular example with $R=3$ and attachment to the vertex with middle fitness, in which condensation occurs at a \textit{random} location within $(0,1)$.

For models with choice, the coupling described earlier results in an urn process for which multiple balls must be drawn and reacted to on each time step.
\cite{Janson2004} comments that such urns are often intractable, however we will be able to analyse the urn process arising from PAC using the aforementioned duality.


\subsection{Multiple waves of natural selection}
\label{sec:popgen}

In populations genetics, models that feature multiple waves of natural selection towards ever fitter individuals, are rare.
To our knowledge, at the present time all known tractable examples are close relatives of the model introduced by \cite{DesaiFisher2007},
who described an extension of the Moran model in which mutations produce ever fitter individuals and selection brings the descendants of some of these individuals to dominance.
A detailed rigorous analysis, in the limit of large population size, was given recently by \cite{Schweinsberg2017};
see also the references therein for variants and special cases that were treated in earlier articles.

In loose terms, we may compare a wave of natural selection in which a fit sub-population emerges and grows to dominance (this is known as a selective sweep)
followed by their later demise in a subsequent even fitter wave,
to the growth and eventual decline of $\frac{1}{n}\deg_n(v)$, where $v$ is a fit vertex within PAC.
\cite{Schweinsberg2017} showed that within the Desai-Fisher model, and under suitable assumptions, 
the initial growth of each new wave could be approximated by a branching process.
However, this approximation breaks down once the new wave becomes a positive fraction of the total population, after which point a fluid limit is used.
The same paradigm can be found within the infectious disease literature, for example in \cite{BallSirl2017} (for a single wave of infection),
and also within the heuristics described for our own proofs in Section \ref{sec:proofs_outline}.
However, in our case \textit{time will be reversed} and we will be tracking the growth of the genealogies of half-edges.

There are substantial differences between the Desai-Fisher model and PAC.
In the Desai-Fisher model individuals die and are replaced, whereas
in PAC once a vertex has appeared it remains present forever.
Moreover, in the corresponding regime of the Desai-Fisher model, 
the individuals that cause the $(j+1)^{th}$ wave first appear during the $j^{th}$ wave, 
whereas within PAC we will see that a new fittest vertex born at time $\approx n^\beta$, where $\beta\in(0,1)$, will survive through several waves of dominance by less fit (but older) vertices, before it has its own chance at time $\approx n$.

\section{Results on Preferential Attachment with Choice by fitness}
\label{sec:our_model}

Let us now define the notation which, from now on, we use (only) for PAC. 
From hereon we refer to PAC as `the' model.
The model is parametrized by (the distributions of) a pair of random variables, $F$ taking values in $[0,1]$ and $R$ taking values in $\N$. 
For clarity, we use the convention that $\N=\{1,2,\ldots\}$, so $R$ does not take the value $0$.
Let $(F_n)$ be a sequence of i.i.d.~samples of $F$, and let $(R_n)$ be a sequence of i.i.d.~samples of $R$.

We describe an increasing sequence of random graphs $(\mc{G}_n)_{n\geq 0}$ with vertex and edge sets $\mc{G}_n=(\mc{V}_n,\mc{E}_n)$.
We begin from a graph $\mc{G}_0$, which we will take to be a single vertex $v_0$ with a self-loop.
In fact, our results hold for an arbitrary finite initial graph $\mc{G}_0$,
but we follow a common convention and make this choice for simplicity.

\begin{defn}
\label{def:graph_model}
The dynamics of $(G_n)$ are as follows.
At each time step we will add a single new vertex $v_n$ to the graph, so that $\mc{V}_n=\{v_0,v_1,\ldots,v_n\}$. The new vertex $v_n$ is assigned the fitness value $F_n$.
Given $\mc{G}_{n-1}$ and the fitnesses of its vertices, we attach $v_{n}$ according to the following rule.
\begin{enumerate}
\item First, we sample an ordered set of $R_{n}$ existing vertices, which we label as $p_{n,1},p_{n,2},\ldots,p_{n,R_{n}}$. We define the (unordered) set 
\begin{equation}\label{eq:parents}
P_n=\{p_{n,1},p_{n,2},\ldots,p_{n,R_{n}}\}.
\end{equation}

Each of the $p_{n,l}$ is sampled independently and with replacement, from $\mc{V}_n$, according to preferential attachment. That is, for each index $l=1,\ldots,R_n$, the probability of picking the vertex $v\in\mc{V}_n$ is proportional to $\deg_{n-1}(v)$.
\item A single new vertex $v_{n}$ joins the graph by attaching via a single new edge to the fittest vertex in $P_n$.
\end{enumerate}
We assume that the distribution of $F$ is absolutely continuous, with essential supremum $1$.
Consequently, distinct vertices have unique fitness values and step 2 is well defined.
\end{defn}

\begin{remark}
\label{rem:fitness_Gn}
Within $P_n$, which vertex is fittest depends on the order of the fitness values, but not on their specific values.
Thus, whilst $\mu_n$ defined by \eqref{eq:mu_graph} does depend on the distribution of $F$, in fact in PAC the distribution of the graph $\mc{G}_n$ does not.
\end{remark}
The key parameter in the model is the distribution of $R$, which affects both $\mu_n$ and $\mc{G}_n$. Heuristically, when $R$ tends to take larger values, we should expect that fit vertices will become more successful at capturing edges, thus making condensation more prone to occur.
We will assume, throughout, that
$$
\E[R]<\infty.
$$
We now state our results rigorously.
Our first result sets the scene, and shows that as $n\to\infty$ each vertex grows towards infinite degree but, whilst doing so, does not become a persistent hub.

\begin{theorem}\label{thm:fixed}
Let $v$ be a (deterministic) vertex. Then, $\deg_{\mc{G}_n}(v)\to\infty$ almost surely, and $\frac{1}{n}\deg_{\mc{G}_n}(v)\to 0$ in probability.
\end{theorem}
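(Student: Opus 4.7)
My plan is to treat the two conclusions separately. For $\deg_{\mc{G}_n}(v) \to \infty$ almost surely, I would apply the conditional Borel--Cantelli lemma to the events $A_n = \{v_n \text{ attaches to } v\}$. Writing $p_n = \deg_{n-1}(v)/(2(n+1))$ and letting $q_n$ denote the degree-weighted proportion, within $\mc{V}_{n-1} \setminus \{v\}$, of vertices with fitness strictly less than $F_v$, the attachment event equals the event that the maximum fitness within $P_n$ equals $F_v$; conditioning on $R_n = r$ gives
\begin{equation*}
\P[A_n \mid \mc{F}_{n-1}, R_n = r] = (p_n + q_n)^r - q_n^r \geq r \, p_n \, q_n^{r-1},
\end{equation*}
and averaging over the independent $R_n$ yields $\P[A_n \mid \mc{F}_{n-1}] \geq p_n \, \E[R\, q_n^{R-1}]$. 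Absolute continuity of $F$ together with $\mathrm{esssup}\, F = 1$ implies $\P[F < F_v \mid F_v] > 0$ almost surely, and since each vertex with fitness below $F_v$ contributes degree at least one, the strong law applied to the i.i.d.~fitnesses $(F_j)$ forces $q_n \geq c > 0$ eventually, almost surely. Combined with the trivial $p_n \geq 1/(2(n+1))$ this gives $\sum_n \P[A_n \mid \mc{F}_{n-1}] = \infty$ almost surely, and conditional Borel--Cantelli delivers $\deg_{\mc{G}_n}(v) \to \infty$ almost surely.

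For $\deg_{\mc{G}_n}(v)/n \to 0$ in probability, I would analyse the degree-weighted fitness CDF $\tau_n(f) := \mu_n([0,f])$ by stochastic approximation. Setting $g(x) := \E[x^R]$, the probability that the fittest sample in $P_n$ has fitness at most $f$ equals $g(\tau_{n-1}(f))$, and tracking the two contributions (the new vertex itself and the vertex to which it attaches) yields
\begin{equation*}
\E[\tau_n(f) - \tau_{n-1}(f) \mid \mc{F}_{n-1}] = \frac{h_f(\tau_{n-1}(f))}{2(n+1)}, \qquad h_f(\tau) := g(\tau) - 2\tau + \P[F \leq f].
\end{equation*}
For each $f \in (\mathrm{essinf}\, F,\, 1)$ the convex function $h_f$ satisfies $h_f(0) > 0$ and $h_f(1) < 0$, so it has a unique root $\tau^*(f) \in (0,1)$, which is an attracting fixed point for the dynamics. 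Standard stochastic-approximation arguments (Robbins--Monro type, given that the noise is a bounded martingale difference of order $1/n$) then give $\tau_n(f) \to \tau^*(f)$ in probability, indeed almost surely. Absolute continuity of $F$ makes $f \mapsto \P[F \leq f]$ continuous, and hence $f \mapsto \tau^*(f)$ is continuous. Coupled with the sandwich
\begin{equation*}
\frac{\deg_{\mc{G}_n}(v)}{2(n+1)} = \mu_n(\{F_v\}) \leq \tau_n(F_v) - \tau_n(F_v - \varepsilon),
\end{equation*}
this yields $\limsup_n \deg_{\mc{G}_n}(v)/n \leq 2(\tau^*(F_v) - \tau^*(F_v - \varepsilon))$ in probability, and letting $\varepsilon \downarrow 0$ closes the argument.

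The main obstacle is the second part, specifically the stochastic-approximation step: while the drift computation and fixed-point analysis are routine, rigorously establishing the convergence $\tau_n(f) \to \tau^*(f)$ requires standard but non-trivial control of the compensator-martingale decomposition (e.g.\ via Robbins--Monro or the ODE method), and some care is needed concerning uniformity in $f$ so that the $\varepsilon \downarrow 0$ passage is legitimate. The first conclusion, by contrast, is a relatively short conditional Borel--Cantelli argument once the positivity of $q_n$ has been secured by the strong law.
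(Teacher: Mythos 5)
Your proposal is correct in substance, but it takes a genuinely different route from the paper on both halves. For $\deg_{\mc{G}_n}(v)\to\infty$, the paper (Lemma \ref{lem:family_inf}) works with unconditional probabilities of correlated events and therefore resorts to the Kochen--Stone lemma along a carefully constructed subsequence of times at which all potential parents are distinct source balls; your conditional (L\'evy) Borel--Cantelli argument sidesteps the dependence issue entirely, since once the strong law guarantees that the degree-weighted mass of vertices less fit than $v$ stays above some $c>0$, the conditional probabilities are bounded below by a constant times $1/n$ pathwise, and this is if anything shorter than the paper's argument. For $\frac{1}{n}\deg_{\mc{G}_n}(v)\to 0$, the paper reduces to $\P[\v{c}_n\in\v{u}^\uparrow]\to 0$ and uses the Galton--Watson coupling (Lemmas \ref{lem:gw_coupling}, \ref{lem:Kn_control}, \ref{lem:leaf_production}): either the tree dies out and is too small to contain the fixed ball, or it survives and contains many source leaves, one likely fitter than $\v{u}$. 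Your alternative, stochastic approximation for $\tau_n(f)$ with the exact drift $h_f(\tau)=\E[\tau^R]-2\tau+\P[F\le f]$ followed by the squeeze $\mu_n(\{F_v\})\le\tau_n(F_v)-\tau_n(F_v-\varepsilon)$, is viable and uses precisely the machinery (Pemantle's Corollary 2.7) that the paper itself invokes in proving Proposition \ref{prop:condensation_urn_finite}. The details you flag are standard: convexity of $h_f$ with $h_f(1)<0$ forces $h_f'(\tau^*)<0$, giving continuity of $\tau^*$ on $(\operatorname{essinf} F,1)$; the random point $F_v$ is handled via a.s.\ convergence at all rationals plus monotonicity of $\tau_n$; and note $\tau^*$ jumps at $f=1$ exactly when $\E[R]>2$ (that jump is the condensate), so only pointwise continuity at $F_v<1$, which holds a.s., may be used. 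Your route buys a self-contained proof and in fact almost-sure convergence of $\frac{1}{n}\deg_{\mc{G}_n}(v)$; the paper's route reuses its dual-process lemmas directly in Theorem \ref{thm:condensation_graph} and showcases the extinction/non-extinction dichotomy central to the article.
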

The next result describes the precise limiting distribution of the degree weighted fitnesses distribution $\mu_n$, as $n\to\infty$.
Of course, this results in a characterization of when condensation occurs.
The statement involves a particular Galton-Watson process which, as we have already mentioned, will play a key role in the proof.

\begin{theorem}\label{thm:condensation_graph}
Suppose that $\E[R^2]<\infty$.
Let $L$ be the number of leaves of a Galton-Watson tree, started with a single individual
and with offspring distribution $M$ given by
\begin{equation}\label{eq:gw_offspring}
\P[M=m]=
\begin{cases}
\frac12 & \text{ if }m=0\\
\frac12\P[R=m] & \text{ if }m\in\N.
\end{cases}
\end{equation}
Let $(C_n)_{n\in\N}$ be a sequence of i.i.d.~copies of $F$, independent of $L$.
Then, almost surely, as $n\to\infty$, $\mu_n$ converges weakly to the probability measure $\mu$ on $[0,1]$ given by
\begin{equation}\label{eq:limiting_colour}
\mu([0,a])=\frac12\P[F\leq a]+\frac12\l(\P\l[L<\infty\text{ and }\max_{i=1,\ldots,L}C_i \leq a\r]+\1_{\{a=1\}}\P[L=\infty]\r).
\end{equation}
where $a\in[0,1]$.
\end{theorem}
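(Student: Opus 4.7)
The strategy uses the time-reversed duality sketched in the introduction. At each step $k$ of the construction of $\mc{G}_n$, two new half-edges are created: a \emph{tail} at $v_k$ carrying the fresh fitness $F_k$, and a \emph{head} at the argmax winner carrying a fitness equal to the maximum of the fitnesses of the $R_k$ preferential-attachment samples. Since a uniformly chosen half-edge of $\mc{G}_n$ is a tail with probability $\to \tfrac12$ and a head with probability $\to \tfrac12$, we unfold the recursion one step: the fitness of a tail is a fresh copy of $F$; the fitness of a head is the max of $R$ fitnesses of uniform half-edges at the previous time, each of which is itself a tail (a fresh $F$) or a head (again branching by $R$).

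Iterating, the genealogy of a uniform half-edge is a tree whose root has offspring distribution $M$ (no offspring with probability $\tfrac12$, else $R$ offspring with probability $\tfrac12$) and whose descendants share this $M$ distribution -- precisely the Galton--Watson tree of the theorem. The fitness of the half-edge is then $\max_{i=1,\ldots,L}C_i$ when $L<\infty$; when $L=\infty$ the supremum of infinitely many i.i.d.\ copies of $F$ equals $1$ almost surely (since $\esssup F=1$), contributing an atom at $a=1$. Decomposing according to whether the root is a tail or a head, and in the head case according to whether $L$ is finite or infinite, produces the three pieces of \eqref{eq:limiting_colour}.

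Two technical issues must be handled rigorously. The genuine dual is branching--coalescing rather than Galton--Watson, since independent preferential-attachment samples may land on the same half-edge; the plan is to fix a depth $d$, bound the expected number of pairs of dual positions at depths $\leq d$ using $\E[R^2]<\infty$ together with the branching-process moment estimates, show that the per-pair coalescence probability is $O(1/|\mc{E}_n|)$, and then send $d\to\infty$ using tail estimates on $L$. Second, the theorem claims almost sure (not merely in-expectation) weak convergence of $\mu_n$. This follows from a second moment bound: applying the same coalescence estimates to the joint genealogy of two independently sampled half-edges gives $\var\l(\mu_n([0,a])\r)\to 0$ at continuity points of $\mu$, and a Borel--Cantelli argument along a countable dense set of such $a$, combined with monotonicity of the CDF, upgrades this to almost sure weak convergence. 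The main difficulty throughout is the coalescence control in the supercritical regime, where the degree distribution of $\mc{G}_n$ is heavy-tailed and the contribution of a few high-degree vertices to collision probabilities must be treated carefully; this is where the $\E[R^2]<\infty$ assumption is genuinely used.
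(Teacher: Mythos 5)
Your identification of the limit follows essentially the same route as the paper: a uniform half-edge is a source (``tail'') or a cue (``head'') with probability $\tfrac12$ each, the potential-parent genealogy of a cue is coupled to a Galton--Watson tree with offspring law $M$ for an arbitrarily large $\mc{O}(1)$ number of generations, and the extinction/survival dichotomy together with $\esssup F=1$ produces the two regimes in \eqref{eq:limiting_colour}; your depth-$d$ truncation with a pairwise union bound to control coalescence is the same device as the paper's coalescence lemma (which, note, needs only a high-probability bound on the truncated tree size, not $\E[R^2]<\infty$). Two small cautions. First, potential parents are sampled uniformly over half-edges, so any two independent draws collide with probability at most $1/|\mc{U}_k|$ irrespective of the degree sequence; the heavy-tailed degrees are a red herring here, and $\E[R^2]<\infty$ enters only in bounding the probability that the $R_n$ draws of a single step fail to be distinct. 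Second, your closing sentence deserves more careful bookkeeping at the root: rooted at a head, the tree has root offspring $R$ (not $M$) with all later individuals having offspring $M$, whereas rooting at the uniform half-edge itself gives root offspring $M$ and yields $\mu([0,a])$ in one piece rather than as the displayed $\tfrac12$--$\tfrac12$ sum; you should verify that whichever convention you adopt for $L$ is applied consistently when matching the three pieces of \eqref{eq:limiting_colour}, since the two conventions do not give the same second term.

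Where you genuinely diverge from the paper is the upgrade from convergence of expectations to almost sure convergence, and there your argument as stated has a gap. The paper does not use second moments at all: it reduces to a two-colour urn and shows that $\nu_n$, the proportion of cue balls of the lower colour, satisfies a stochastic approximation recursion $\nu_{n+1}-\nu_n=\frac1n\l(g(\nu_n)+\xi_n+\mc{O}(n^{-1})\r)$ with $g(\nu)=M_R\l(\frac{\nu+\lambda}{2}\r)-\nu$ having a unique zero in $[0,1]$, so that Corollary 2.7 of \cite{Pemantle2007} gives a.s.\ convergence to a deterministic limit, which is then identified via the Galton--Watson computation of $\lim_n\P[\col(\v{c}_n)=0]$ and dominated convergence. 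Your plan is to show $\var\l(\mu_n([0,a])\r)\to 0$ by a two-particle genealogy argument and then apply Borel--Cantelli; but a vanishing variance gives only convergence in probability, and Borel--Cantelli in $n$ requires summable deviation probabilities --- your ``countable dense set'' handles the variable $a$, not $n$. To close this you need a quantitative rate, e.g.\ take the truncation depth $d=d(n)\asymp\log n$ so that both the coalescence probability and the probability that the colour is undetermined at depth $d$ are $\mc{O}(n^{-\delta})$ for some $\delta>0$, hence $\var\l(\mu_n([0,a])\r)=\mc{O}(n^{-\delta})$; then apply Chebyshev and Borel--Cantelli along a subsequence $n_j=j^K$ with $K\delta>1$ and use the fact that $(2n+2)\mu_n([0,a])$ changes by at most $2$ per step to control the oscillation between consecutive $n_j$. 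With that repair (or with fourth-moment bounds, or by simply adopting the paper's stochastic-approximation step, which needs no rate) your route goes through, and it is a legitimately different mechanism for the almost sure statement than the one the paper uses.
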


\begin{cor}\label{cor:condensation}
Suppose that $\E[R^2]<\infty$. Then condensation occurs if and only if $\E[R]>2$.
\end{cor}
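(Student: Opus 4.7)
The plan is to read off the mass that $\mu$ places at $1$ directly from the formula \eqref{eq:limiting_colour} of Theorem \ref{thm:condensation_graph}, and then translate a positive atom into the standard criterion for non-extinction of a Galton-Watson process.

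First, I would observe that because $F$ is absolutely continuous and the $C_i$ are i.i.d.\ copies of $F$, the map $a\mapsto \P[F\leq a]$ is continuous on $[0,1]$, and moreover $\max_{i=1,\ldots,L} C_i < 1$ almost surely on $\{L<\infty\}$. Hence, letting $a\nearrow 1$ in \eqref{eq:limiting_colour} and using monotone convergence,
\begin{equation*}
\lim_{a\to 1^-}\mu([0,a]) \;=\; \tfrac12 \P[F<1] + \tfrac12 \P[L<\infty] \;=\; \tfrac12 + \tfrac12 \P[L<\infty].
\end{equation*}
Since $\mu$ is a probability measure, this gives $\mu(\{1\}) = \tfrac12 \P[L=\infty]$. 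Because $\P[F\leq a]$ is continuous and because the conditional law of $\max_{i=1,\ldots,L} C_i$ on $\{L<\infty\}$ is continuous, $a=1$ is the only point at which $\mu$ can have an atom; so condensation (in the sense of the introductory remark, i.e.\ at $1$) occurs if and only if $\P[L=\infty]>0$.

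Next, I would identify $\P[L=\infty]$ with the survival probability of the Galton-Watson tree. Since $\P[M=0]=\tfrac12>0$, along any infinite line of descent each vertex has, independently, positive probability of producing a leaf among its siblings; a standard Borel-Cantelli argument (or the fact that an infinite GW tree with $\P[M=0]>0$ almost surely has infinitely many leaves) shows that $L=\infty$ if and only if the tree is infinite, almost surely. Thus $\P[L=\infty]$ equals the non-extinction probability of the Galton-Watson process with offspring distribution $M$.

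Finally, I would compute
\begin{equation*}
\E[M] \;=\; \sum_{m\geq 1} m\cdot \tfrac12 \P[R=m] \;=\; \tfrac12 \E[R],
\end{equation*}
which is finite under the assumption $\E[R^2]<\infty$ (indeed, only $\E[R]<\infty$ is needed here). By the classical Galton-Watson dichotomy, the non-extinction probability is positive if and only if $\E[M]>1$ (note that $\P[M=1]<1$ because $\P[M=0]=\tfrac12$, ruling out the degenerate critical case). This is equivalent to $\E[R]>2$, which proves the corollary. There is no serious obstacle here, since all of the analytical content is absorbed by Theorem \ref{thm:condensation_graph}; the only point requiring a little care is the elementary verification that $L=\infty$ really is equivalent to non-extinction of the tree, and that the atom of $\mu$ can only appear at $1$.
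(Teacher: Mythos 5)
Your proposal is correct and takes essentially the same route as the paper, which disposes of the corollary in a single line by observing that it is immediate from Theorem \ref{thm:condensation_graph} because $\P[L=\infty]>0$ if and only if $\E[R]>2$ (the Galton--Watson offspring mean being $\tfrac12\E[R]$). Your write-up simply makes explicit the routine steps the paper leaves implicit: the computation $\mu(\{1\})=\tfrac12\P[L=\infty]$, the identification of $\{L=\infty\}$ with non-extinction of the tree, and the equivalence of condensation at $1$ with $\mu$ having an atom at $1$, which the paper itself asserts in the discussion following Definition \ref{def:condensation}.
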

Note that we do not observe condensation at $R=2$, when the Galton-Watson process is critical.
Criticality may well lead to other interesting behaviour, but we do not explore this possibility within the present article.
It is clear that Corollary \ref{cor:condensation} follows immediately from Theorem \ref{thm:condensation_graph}, because $\P[L=\infty]>0$ if and only if $\E[R]>2$.
We state our final result:
\begin{theorem}\label{thm:extensive_condensation_graph}
Suppose that $\E[R]>2$ and that $\E[R^2]<\infty$.
Then, extensive condensation occurs within the model.
\end{theorem}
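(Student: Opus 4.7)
The plan is to pinpoint a specific early vertex that will become the condensate and show that its degree is of order $n$. Fix $\delta>0$ small and choose $\beta\in(0,1)$ (to be tuned in terms of the offspring mean $\E[M]=\tfrac12\E[R]>1$). Among $v_0,v_1,\ldots,v_{\lfloor n^\beta\rfloor}$ let $v^\star$ denote the fittest vertex; since $F$ has essential supremum $1$, $\P[F_{v^\star}>1-\delta]\to 1$ as $n\to\infty$. Because $|\mc{E}_n|$ is of order $n$, it suffices to prove
\begin{equation*}
\liminf_{n\to\infty}\E\!\left[\frac{\deg_n(v^\star)}{n}\right]>0
\end{equation*}
for every $\delta>0$, from which extensive condensation at $1$ follows immediately from Definition \ref{def:condensation}.

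I would attack the degree of $v^\star$ via the time-reversed duality. For each $k\in(\lfloor n^\beta\rfloor,n]$, the half-edge created at step $k$ ends up attached to $v^\star$ precisely when the branching-coalescing genealogy traced backwards from it reaches $v^\star$ and $v^\star$ is the fittest vertex along the way. Ignoring coalescence for the moment, this genealogy is a Galton-Watson tree with offspring law $M$ from \eqref{eq:gw_offspring}. Since each branching replaces a half-edge by uniformly sampled half-edges of a graph whose size is roughly half as large, the graph-time of a dual particle is multiplied per generation by an approximately uniform$(0,1)$ factor; going backwards from step $k$ to step $n^\beta$ therefore corresponds to roughly $\log(k/n^\beta)$ Galton-Watson generations. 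Because $\E[R]>2$ the tree survives with positive probability $\P[L=\infty]$, and on survival the dual population present at graph-time $n^\beta$ should be of polynomial size $n^\gamma$ for some $\gamma=\gamma(\beta,\E[R])>0$.

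The main obstacle is controlling the dual through the long intermediate regime where the Galton-Watson approximation fails because coalescences are non-negligible, yet the dual population is still much smaller than the graph. This is exactly where $\E[R^2]<\infty$ enters: a second-moment computation bounds the per-generation coalescence probability, and I would build a (super)martingale tracking an appropriately rescaled dual population size. Weak-convergence arguments, in the spirit of the wave analyses of \cite{Schweinsberg2017} referenced in Section \ref{sec:popgen}, then identify a limiting exponentially-growing process whose survival event has probability bounded below. On that event the dual visits a macroscopic set of half-edges born before time $n^\beta$, and a second-moment / Borel--Cantelli argument shows that with probability bounded below it visits a half-edge at $v^\star$. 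Since $F_{v^\star}>1-\delta$ exceeds every other visited fitness with high probability (by a separate weak-convergence calculation using that all other visited vertices carry independent $F$-distributed fitnesses), $v^\star$ is then the chosen parent at step $k$. Summing the resulting lower bound over $k\in(\lfloor n^\beta\rfloor,n]$ and a concentration argument together yield $\deg_n(v^\star)\geq cn$ with probability bounded below, completing the proof.
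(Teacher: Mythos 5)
Your overall route is the one the paper itself takes: target the fittest vertex $v^\star$ born before time of order $n^\beta$, trace the time-reversed dual of each late half-edge, control the tree-like phase by moment estimates and the coalescing phase by a weak-convergence/fluid-limit argument, then argue that $v^\star$ out-competes every fitness the dual sweeps up after time $n^\beta$, and sum over $k$. The genuine gap is in the step where you close the argument: you assert that $F_{v^\star}$ exceeds every other visited fitness \emph{with high probability}, ``by a separate weak-convergence calculation using that all other visited vertices carry independent $F$-distributed fitnesses''. As stated this is false. The number of sources born after time $n^\beta$ that the dual of $\v{c}_k$ sweeps up is itself of order $n^\beta$ --- this is exactly what Proposition \ref{prop:branching_phase_pickup} quantifies, $\E[N^n_{Cn^\beta,n}]\lesssim n^\beta/(\beta C^{\zeta-1})$ --- i.e.\ of the same order as the pool of which $v^\star$ is the maximum. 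So the comparison is only a constant-probability event, and the whole proof hinges on showing that this constant does not wipe out the (also constant, and window-dependent) probability that the dual reaches the particular half-edge of $v^\star$. The paper resolves this tension by working in a window $[cn^\beta,Cn^\beta]$ and checking that the cost of enlarging $C$ (the chance of hitting the window's fittest vertex decays like $C^{-1}$, see \eqref{eq:kncj_lower}) is beaten by the gain (the chance of sweeping up a fitter later-born vertex decays like $C^{-\zeta}$ up to a logarithm, see \eqref{eq:Q_upper} and \eqref{eq:Lnn}); this balance, highlighted in Remark \ref{rem:ext_vs_non}, is precisely what separates extensive from non-extensive condensation. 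Nothing in your sketch produces the needed counting estimate --- it is not the second-moment coalescence bound you mention, but a first-moment recursion for the dual size backwards in time (Lemmas \ref{lem:Gk_1st} and \ref{lem:Gk_1st_upper}) --- nor the mechanism for balancing the two constants, so the final step does not close as written.

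A secondary issue: your heuristic for locating the window (one Galton--Watson generation back multiplies graph time by a roughly uniform factor, so time $k$ is about $\log(n/k)$ generations back) gives, if taken literally, a dual of size $(n/k)^{\log\zeta}$ at graph time $k$, whereas the correct order is $(n/k)^{\zeta-1}$, obtained by iterating in graph time rather than in generations; the critical exponent $\beta=(\E[R]-2)/\E[R]$ is forced by $(n/k)^{\zeta-1}\asymp k$, so tuning $\beta$ through the per-generation picture would mislocate the window. Relatedly, the fluid limit is not an ``exponentially-growing process'' but, after rescaling, the logistic ODE \eqref{eq:stablization_ODE} for the \emph{fraction} of the urn occupied by the dual, and the delicate point there --- the initial condition has size of order $C^{-\zeta}$ while the available rescaled time is only $\log(C/c)$, so one must verify escape from the unstable fixed point at $0$ --- is invisible in your sketch. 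The remaining ingredients (uniformity of the dual set within the urn giving a positive chance of containing $v^\star$'s half-edge, and the final summation over $k$, where no concentration is needed since only $\E[\deg_n(v^\star)]$ enters Definition \ref{def:condensation}) do match the paper's proof.
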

Combining Theorems \ref{thm:fixed} and \ref{thm:extensive_condensation_graph}, we have that extensive condensation occurs without the formation of a persistent hub. Our proofs of the above theorems rely on a time-reversed duality, between $(\mc{G}_n)$ and the genealogy of an urn process $(\mc{U}_n)$, which is naturally coupled to $(\mc{G}_n)$ in the same style as described (for classical preferential attachment) in Section \ref{sec:background}.
The genealogy of $(\mc{U}_n)$ can in turn be coupled, but only for a limited time, to a Galton-Watson tree $\mc{T}^n$ with offspring distribution \eqref{eq:gw_offspring}.
We introduce these couplings in Sections \ref{sec:urn_coupling} and \ref{sec:gw_coupling}, 
to be followed by a heuristic outline of the proofs in Section \ref{sec:proofs_outline}.
The proofs themselves, of Theorems \ref{thm:fixed}, \ref{thm:condensation_graph} and \ref{thm:extensive_condensation_graph} are given in Sections \ref{sec:fixed_vertex_family}, \ref{sec:condensation} and \ref{sec:extensive_condensation} respectively.

In Section \ref{sec:general_addition} we discuss a natural extension to our results;
we consider the effects of incorporating a mechanism commonly used to control the strength of preference that incoming vertices have for making connections to high degree vertices.
In PAC this mechanism is closely related to attaching new vertices onto the existing graph via multiple new edges.


\subsection{Couplings and dualities}
\label{sec:couplings}

\subsubsection{Coupling to an urn process}
\label{sec:urn_coupling}

We define an urn process $(\mc{U}_n)$ which will be coupled to $(\mc{G}_n)$.
In the urn, each ball will have a colour, represented as a number in $[0,1]$, and this colour corresponds to a fitness value (of a vertex) in the graph model.
The balls themselves correspond to the half-edges of the graph. We write balls in bold case e.g.~$\v{u}$, and we write the colour of $\v{u}$ as $\col(\v{u})$.
From now on, \textit{we will use the terms fitness and colour interchangeably}.
Formally, let $\mc{U}_n$ be the set of half-edges in the graph $\mc{G}_n$, where $n\in\N_0$.
For each $\v{u}\in \mc{U}_n$, we set $\col(\v{u})$ to be the fitness of the vertex to which $\v{u}$ is attached.
\begin{defn}
\label{def:urn_model}
The dynamics of the process $(\mc{U}_n)$ are as follows.
Label the two initial half-edges in $\mc{G}_0$ as $\v{c}_0$ and $\v{s}_0$.
To construct $\mc{U}_{n}$, given $\mc{U}_{n-1}$, do the following:
\begin{enumerate}
\item Draw $R_n$ balls, independently and uniformly at random, from $\mc{U}_{n-1}$. Label these balls
\begin{equation}\label{eq:potential_parents}
\mc{P}_n=\l\{\v{p}_{n,1},\ldots,\v{p}_{n,R_n}\r\}.
\end{equation}
\item Let $\v{c}_n$ be a new ball with $\col(\v{c}_n)=\max\{\col(\v{p}_{n,l})\-l=1,\ldots,R_n\}$.\\
Let $\v{s}_n$ be a new ball with $\col(\v{s}_n)=F_n$.
\item Define $\mc{U}_{n}=\mc{U}_{n-1}\cup\{\v{c}_n,\v{s}_n\}$.
\end{enumerate}
\end{defn}
Using the notation above, we divide the balls within $\mc{U}_n$ into two distinct types: the \textit{cue} balls $\v{c}_n$ and \textit{source} balls $\v{s}_n$.
Recall that we take the initial graph $\mc{G}_0$ to be a single vertex with a self-loop. We extend the terminology of `cue' and `source' to $\mc{U}_0$, by writing $\mc{U}_0=\{\v{s}_0,\v{c}_0\}$ and specifying that $\v{s}_0$ is a source ball and $\v{c}_0$ is a cue ball.
We write $\mc{S}_n=\{\v{s}_0,\v{s}_1,\ldots,\v{s}_n\}$ and set $\mc{S}=\cup_n\mc{S}_n$.
We define $\mc{C}_n$ and $\mc{C}$ analogously for cue balls.
Thus, $\mc{U}_n=\mc{S}_n\cup\mc{C}_n$ and we set $\mc{U}=\mc{S}\cup\mc{C}$.

The process $\mc{U}_n$ is a projection of $\mc{G}_n$, in the sense that $\mc{U}_n$ forgets the graph structure and remembers only how many half-edges of each colour were present in $\mc{G}_n$. Nonetheless, $\mc{U}_n$ is a Markov process with respect to the filtration generated by $(R_n,F_n,\mc{P}_n)$.
Note that the random measure $\mu_n$ satisfies
\begin{equation}\label{eq:mu_urn}
\mu_n(A)=\frac{1}{|\mc{U}_n|}\sum\limits_{b\in\mc{U}_n}\1\{\col(b)\in A\}.
\end{equation}
Thus, $\mu_n(A)$ is the proportion of balls with colour $\in A$ at time $n$. 
We can therefore understand \eqref{eq:limiting_colour} as expressing the limiting distribution
of the colour of a ball drawn (uniformly) from the urn at large time.

\subsubsection{Representation as a genealogy}
\label{sec:genealogy_repr}

We equip the balls in the urn $\mc{U}$ with a genealogy that records the way in which each new cue ball $\v{c}_n$ inherits its colour from a single pre-existing ball.
We will use terminology from population genetics to describe this genealogy. The fitness values (i.e.~colours) play precisely the role of fitnesses in population models.

We say that $\mc{P}_n$ from \eqref{eq:potential_parents} are the \textit{potential parents} of $\v{c}_n$.
We refer to the unique ball in $\mc{P}_n$ with colour $\col(\v{c}_n)$ as the \textit{parent} of $\v{c}_n$.
We say that $\v{c}_n$ is a \textit{child} of its parent ball.
To handle time $n=0$, we say that $\v{s}_0$ is the parent of $\v{c}_0$, and we give $\v{c}_0$ precisely $R_0$ potential parents all of which are equal to $\v{s}_0$, where $R_0$ is an independent copy of $R$. Lastly, source balls do not have any parents or any potential parents.

A finite sequence $(\v{b}^{(k)})_{k=1}^K$ of balls in which, for all $k$, the ball $\v{b}^{(k+1)}$ is the parent of $\v{b}^{(k)}$ (resp.~potential parent of $\v{b}^{(k)}$), and in which $\v{b}^{(K)}$ is a source ball, is said to be the \textit{ancestral line} (resp.~a \textit{potential ancestral line}) of $\v{b}^{(1)}$.
We stress that each ball has a unique ancestral line, but multiple potential ancestral lines. 
Each potential ancestral line ends in a source ball, which necessarily has no potential parents.
Given any ball $\v{b}\in\mc{U}$, we write $\v{b}^\downarrow$ for the set of balls that appear in one or more of the potential ancestral lines of $\v{b}$, including $\v{b}$ itself.
The set $\v{b}^{\downarrow}$ is known as the set of \textit{potential ancestors} of $\v{b}$.
If we couldn't see the fitness values of the balls, but could see which balls made up the sets $\mc{P}_n$, then $\v{b}^{\downarrow}$ represents the full set of balls which might have been lucky enough to have their own fitness value passed on $\v{b}$. Thus
\begin{equation}\label{eq:duality}
\col(\v{b})=\max\l\{\col(\v{u})\-\v{u}\in \v{b}^\downarrow\r\}
=\max\l\{\col(\v{s})\-\v{s}\in \v{b}^\downarrow\cap\mc{S}\r\}.
\end{equation}
In words: the colour of $\v{b}$ is the colour of the fittest source ball within its potential ancestors.
This fact is a natural consequence of point 2 of Definition \ref{def:urn_model}.
See Figure \ref{fig:urn_gen} for a graphical explanation.

\begin{figure}[t]
\center{\includegraphics[scale=0.4]
{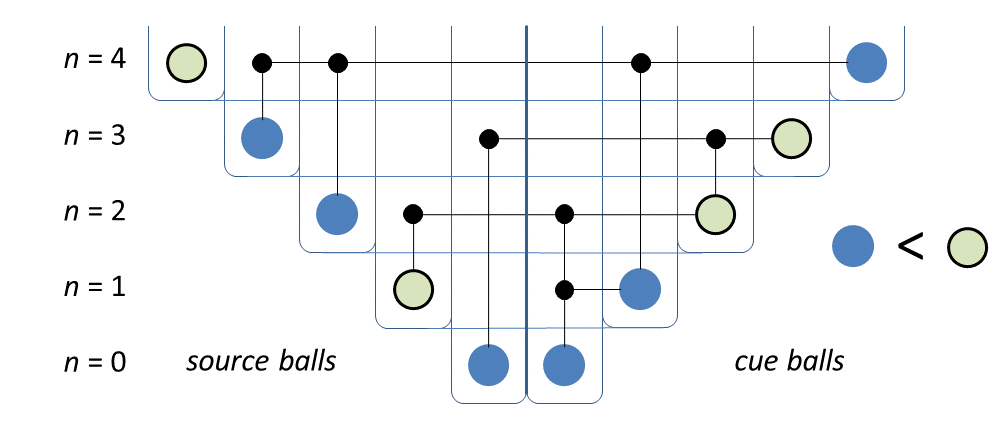}}
\caption{\label{fig:urn_gen} 
A graphical representation of the genealogy of the urn $(\mc{U}_n)$, in a case with only two different colours of balls. 
Columns correspond to balls, with source balls on the left and cue balls on the right.
Rows correspond to time-steps, with one new source ball and one new cue ball introduced on each row.
The relative fitness of the two colours is shown by an inequality. 
Smaller black dots correspond to potential parents chosen by cue balls, and the lines connecting them to balls represent the genealogy.
Looking backwards in time, branching is visible at each step of time when the new cue ball samples its potential parents, as black dots on the same row.
Coalescences occur when the same potential parent is sampled more than once (at possibly different times); one such event is visible
at $n=1,2$ within the column of the initial cue ball.
}
\end{figure}

Equation \eqref{eq:duality} is in the same spirit as the duality used (for a version of the spatial $\Lambda$-Fleming-Viot process) by \cite{EtheridgeFreemanEtAl2017}.
More generally, dualities of this kind are instances of ancestral selection graphs, introduced by \cite{KroneNeuhauser1997}. The selective mechanism of always choosing the potential parent with maximal fitness simplifies their structure considerably, whereas in general they can lead to quite intractable dual processes.

We will be particularly interested in the structure of $\v{c}^\downarrow_n$, when $n$ is large.
It is natural to view $\v{c}_n^\downarrow$ as a branching-coalescing structure: \textit{coalescence} of (potential) ancestral lines occurs when a given ball is a (potential) parent to more than one cue ball. Similarly, when a cue ball has more than one potential parent we say that it as a \textit{branching} of potential ancestral lines.

We write $\v{b}^{\uparrow}$ for the set of balls which contain $\v{b}$ within their ancestral line.
The set $\v{b}^\uparrow$ is known as the \textit{family} or \textit{descendants} of $\v{b}$.
When $\v{b}$ is a source ball we refer to $\v{b}$ as the \textit{founder} of the family $\v{b}^\uparrow$, and the members of this family at time $n$ are $\mc{U}_n\cap \v{b}^\uparrow$.
Note that all elements of $\v{b}^\uparrow$ have the same colour as $\v{b}$, and that if $v$ is the vertex to which the source ball $\v{s}_k$ is attached, then
\begin{equation}
\label{eq:urn_degrees}
\deg_{\mc{G}_n}(v)=|\mc{U}_n\cap\v{s}_k^\uparrow|.
\end{equation}
We stress that $\v{b}^\uparrow$ is based on ancestral lines, whereas $\v{b}^\downarrow$ is based on potential ancestral lines.
Hence, $\v{b}^\uparrow$ depends on the sequence of fitnesses $(F_n)$, but $\v{b}^\downarrow$ does not.

Theorem \ref{thm:fixed} states that $\P[|\v{b}^\uparrow|=\infty]=1$ and $\frac{1}{n}|\v{b}^\uparrow\cap\mc{U}_n|\to 0$ in probability, for any fixed ball $\v{b}\in\mc{U}$.
Theorem \ref{thm:extensive_condensation_graph} is more complex to translate, but note that it is implied if,
as $n\to\infty$, we see a non-vanishing probability that $\sup_{k\leq n} \frac{1}{n}|\v{s}_k^\uparrow\cap\mc{U}_n|$ has asymptotic order $n$. That is, the size of the largest family at time $n$ should be of order $n$.

\subsubsection{Coupling to a Galton-Watson process}
\label{sec:gw_coupling}

There is a natural coupling between the urn process $(\mc{U}_n)$ and a Galton-Watson process, which we will now describe.
This coupling is only valid for a limited time;
a Galton-Watson tree can only accurately represent the genealogy of $\v{c}_n$ as far backwards in time as that genealogy remains tree-like.
Let $\mc{W}^n_0=\{\v{c}_n\}$.
Then, iteratively, define
$$\mc{W}^n_{k}=\{\v{p}\-\v{p}\text{ was a potential parent of some }\v{b}\in\mc{W}^n_{k-1}\}.$$
Note that $\mc{W}^n_k$ is an (unordered) set, so that even if $\v{p}$ is a potential parent to more than one $\v{b}\in\mc{W}^{n}_{k-1}$, only one instance of $\v{p}$ appears in $\mc{W}^n_k$. Note also that $\mc{W}^n_k\sw\v{c}_n^\downarrow$ contains precisely the $k^{th}$ generation of potential ancestors of $\v{c}_n$
Since source balls have no potential ancestors it is possible for $\mc{W}^n_k$ to become empty.
Let
\begin{align*}
K_n&=\inf\l\{k\in\N\-\exists\,\v{p}_{i,l}=\v{p}_{i',l'}\text{ such that }(i,l)\neq (i',l')\text{ and }\v{c}_i,\v{c}_{i'}\in \mc{W}^n_0\cup\ldots\cup\mc{W}^n_{k-1}\r\}.
\end{align*}
In words, $K_n$ is the first generation of $\mc{W}^n_k$ in which we encounter a coalescence.
We use the usual convention that the empty set has $\inf\emptyset=\infty$, covering the case where no coalescences are encountered (which may occur when if $\mc{W}^n_k$ becomes empty).

\begin{lemma}\label{lem:gw_coupling}
Let $W^n_k=|\mc{W}^n_k|$, and let $(Z_k)$ be a Galton-Watson process with offspring distribution given by \eqref{eq:gw_offspring}.
Then there exists a coupling such that $(W^n_k)_{k=0}^{K_n-1}=(Z_k)_{k=0}^{K_n-1}$.
\end{lemma}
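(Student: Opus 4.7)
The strategy is to construct the coupling by a breadth-first exploration of the backward genealogy, revealing the randomness of the urn process $(\mc{U}_n)$ only as needed. The key observation is that while $k + 1 \leq K_n - 1$, no coalescence has occurred yet, so each ball in $\mc{W}^n_{k+1}$ is entering the exploration for the first time; this allows us to reveal its type (source vs.\ cue) and, if cue, its $R$-value, as fresh independent information, which in turn drives the offspring variables of a Galton-Watson process.

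I would proceed by induction on $k$. The base case $W^n_0 = |\{\v{c}_n\}| = 1 = Z_0$ is immediate. For the inductive step, assume $W^n_j = Z_j$ for all $j \leq k$, with $k + 1 \leq K_n - 1$. For each $\v{b} \in \mc{W}^n_k$, the number of potential parents of $\v{b}$ is $0$ if $\v{b}$ is a source ball and $R_i$ if $\v{b} = \v{c}_i$ is a cue ball. The structural observation underpinning the coupling is that the urn $\mc{U}_{j-1}$ at every time $j \geq 1$ contains exactly $j$ source balls and $j$ cue balls, so a uniform sample from $\mc{U}_{j-1}$ is source or cue with probability $\tfrac{1}{2}$ each, and, conditional on being cue, its associated $R_i$ is an independent copy of $R$. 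Combining these facts, the number of potential parents of such a uniform sample is distributed as $M$ in \eqref{eq:gw_offspring}. Summing over the $W^n_k = Z_k$ elements of $\mc{W}^n_k$ then realizes $W^n_{k+1}$ as a sum of $Z_k$ iid $M$-distributed random variables, matching $Z_{k+1}$.

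The main obstacle is formalizing the ``freshness'' used above via a filtration argument. I would define a filtration $(\mc{F}_k)$ recording, for each $j \leq k$, the identities and types of all balls appearing in $\mc{W}^n_j$, together with $R_i$ and the ordered list of potential parents for each cue ball $\v{c}_i$ so far encountered. The hypothesis $k+1 \leq K_n - 1$ translates, by the definition of $K_n$, into the statement that each ball $\v{b} \in \mc{W}^n_{k+1}$ appears in the exploration for the very first time: it has not previously arisen as a potential parent of any ball in $\mc{W}^n_0 \cup \cdots \cup \mc{W}^n_k$. Consequently its type and (if cue) its $R$-value are independent of $\mc{F}_k$, and independent across distinct $\v{b}$. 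Invoking the structural observation of the preceding paragraph then yields the iid $M$-distributed offspring counts that realise $Z_{k+1}$, completing the induction up to generation $K_n - 1$.
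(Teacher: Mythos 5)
Your proposal is correct and takes essentially the same route as the paper: both arguments rest on the urn containing equal numbers of cue and source balls at every time (so a uniform draw is cue or source with probability $\tfrac12$, with an independent copy of $R$ attached when it is a cue), together with the observation that before generation $K_n$ every explored ball is fresh and every sampled potential parent is distinct, so the offspring counts are i.i.d.\ copies of $M$ from \eqref{eq:gw_offspring}; the paper merely phrases this as a two-case check of the one-step transition of $Z$ (treating $k+1=K_n$ and $k+1<K_n$ separately, and letting $Z$ evolve as an independent Galton--Watson process from time $K_n$ onwards, a point you should also state so that $Z$ is defined for all $k$) rather than as an induction. One small attribution slip: the freshness you actually use is that of the balls of $\mc{W}^n_k$ (their types and $R$-values drive the offspring counts, and $k<K_n$ guarantees they have not appeared before), while the role of $k+1<K_n$ is to make the potential parents sampled into $\mc{W}^n_{k+1}$ distinct so that the set cardinality equals the sum of the offspring counts -- your second paragraph has this right, even though the filtration paragraph ascribes the freshness to $\mc{W}^n_{k+1}$.
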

\begin{proof}
Let us first note that,  by Definition \ref{def:urn_model}, the potential ancestry of any given ball is independent of the fitness values of (all) balls. Thus fitnesses play no role in this proof; all quantities considered are independent of them.

Given the process $(W^n_k)$, we define
$Z_k=W^n_k$ for $k<K_n$, and for $k\geq K_n$ we allow $Z_k$ to evolve independently of $(W^n_k)$ as a Galton-Watson process with offspring distribution \eqref{eq:gw_offspring}.
It remains to show that $Z_k$ has the desired distribution for $j<K_n$.
To this end, let us consider when $k<K_n$ and $W^n_k=m$. Thus $Z_k=W^n_k$,
and we look to calculate the distribution of $Z_{k+1}$.
We must consider two cases.
\begin{itemize}
\item If $k+1=K_n$ then by definition of $Z_k$, then the transition $Z_k\mapsto Z_{k+1}$ will be independent of $(W^n_k)$ and will be that of a Galton-Watson process with offspring distribution \eqref{eq:gw_offspring}.
\item If $k+1<K_n$ then $Z_{k+1}=W^n_{k+1}$. We have $W^n_k=m$, so $\mc{W}^n_k$ contains $m$ (distinct) balls, but we do not know the identity of these balls. 
Moreover, because $k<K_n$, each such ball is not an element of $\mc{W}^n_0\cup\ldots\cup\mc{W}^n_{k-1}$.
Thus, each such ball is, independently of each other, and independently of $\mc{W}^n_0\cup\ldots\cup\mc{W}^n_{k}$, a cue ball with probability $\frac12$ and a source ball with probability $\frac12$.
By point 1 of Definition \ref{def:urn_model}, these cue balls each, independently, have i.i.d.~numbers of potential parents with common distribution matching that of $R$.
Because $k+1<K_n$, the identities of these potential parents are distinct.
Thus, $\mc{W}^n_{k+1}$ is the sum of $m$ i.i.d.~copies of $R$, and the transition $W^n_k\mapsto W^n_{k+1}$ is that of a Galton-Watson process with offspring distribution \eqref{eq:gw_offspring}.
\end{itemize}
In both cases, the transition $Z_k\mapsto Z_{k+1}$ has the desired distribution. 
\end{proof}


We will show in Lemma \ref{lem:Kn_control} that $\P[K_n\leq k]\to 0$ as $n\to\infty$, for all $k\in\N$. In words, as $n\to\infty$ the coupling of $\v{c}^\downarrow_n$ to a Galton-Watson tree remains valid for an arbitrarily large $\mc{O}(1)$ number of generations of this tree.

\subsection{Outline of proofs}
\label{sec:proofs_outline}

All of our proofs rely on the couplings detailed above. The proof of Theorem \ref{thm:extensive_condensation_graph} relies on analysing the genealogy of $(\mc{U}_n)$ directly, whereas Theorem \ref{thm:condensation_graph} uses only the Galton-Watson coupling, and Theorem \ref{thm:fixed} uses both. We outline all three proofs in this section.

\medskip

Let us discuss Theorem \ref{thm:fixed} first.
In terms of the urn process, the first part of Theorem \ref{thm:fixed} asserts that $\P[|\v{u}^\uparrow|=\infty]=1$.
The proof rests on the observation that, when $\mc{P}_n$ is sampled, then for any fixed ball $\v{u}$, the probability of $\v{u}\in\mc{P}_n$ is of order $\frac{1}{n}$ as $n\to\infty$.
If we could apply the Borel-Cantelli lemma then, with a little extra work we could deduce that (almost surely) $\v{u}$ was a parent infinitely often, thus $|\v{u}^\uparrow|=\infty$.
Unfortunately, the lack of independence means the Borel-Cantelli lemma does not apply; instead we will use the Kochen-Stone lemma.

The second part of Theorem \ref{thm:fixed} asserts that $|\v{u}^\uparrow\cap\mc{U}_n|/|\mc{U}_n|\to 0$ in probability.
Because this quantity has expectation $(\1_{(\v{u}\in\mc{S})}+\sum_0^n \P[\v{c}_j\in \v{u}^\uparrow])/(2n+2)$, the zero limit is implied if $\P[\v{c}_n\in\v{u}^\uparrow]\to 0$.
To prove the latter, we use that the genealogy of $\v{c}^\downarrow_n$ is that of a Galton-Watson tree, at least for a large $\mc{O}(1)$ number of generations. If this Galton-Watson tree dies out (i.e.~in $\mc{O}(1)$ generations) then it has bounded size and is unlikely to include any fixed ball, in particular $\v{u}$. If it does not die out, then $\v{c}^\downarrow_n$ will include many source vertices, at least one of which is likely to be fitter than $\v{u}$. In both cases, $\v{c}_n\notin \v{u}^\uparrow$.

\medskip

Theorem \ref{thm:condensation_graph} establishes the limiting distribution of colours present in $\mc{U}_n$. Our proof first establishes the result in the case where only a two element set $\{0,1\}$ of colours are permitted. It is straightforward to upgrade this case into Theorem \ref{thm:condensation_graph}.
The argument for the two colour case relies on establishing the distribution of $\col(\v{c}_n)$ as $n\to\infty$.
Heuristically, as $n\to\infty$, we again compare $\v{c}^\downarrow_n$ to a Galton-Watson tree, and again the extinction/non-extinction dichotomy is key.
If the Galton-Watson tree dies out, then the colour of $\v{c}_n$ is the maximal colour of the source balls at its leaves.
If it does not die out, then $\v{c}^\downarrow_n$ contains many generations, which will mean that high probability there will be a source ball of maximum colour (i.e.~colour $1$, in the two colour case) within $\v{c}^\downarrow_n$, in which case $\col(\v{c}_n)=1$.
Recalling that half of all balls are cue balls, and the other half sources, along with \eqref{eq:gw_offspring} these considerations lead directly to the formula \eqref{eq:limiting_colour} given in Theorem \ref{thm:condensation_graph}.
The first term on the right of \eqref{eq:limiting_colour} represents the i.i.d.~colours of source balls, 
the latter term represents the cue balls.
\medskip

The proof of Theorem \ref{thm:extensive_condensation_graph}, given in Section \ref{sec:extensive_condensation}, takes up the majority of the present article. The outline is as follows.
Finding the largest family at time $n$ is essentially the same as identifying which source $\v{s}_k$, for $k\leq n$, was most likely to have founded the family to which $\v{c}_n$ belongs. This, in turn, relies on understanding the behaviour of the genealogy of $\v{c}_n^\downarrow$ during the stage at which it stops being tree-like, and coalescences start to have a significant effect.
Thus, we are trying to examine a property of the model that the Galton-Watson coupling will not capture.
For this reason, the Galton-Watson coupling is not used in the proof of Theorem \ref{thm:extensive_condensation_graph}, and we do not think in terms of the associated parent-child generations.

Consider the urn process $(\mc{U}_k)_{k=0}^n$. Looking backwards in time, as $k$ decreases from $n$ to $0$, we will see that at around time $k\approx n^\beta$, where
$$\beta=\frac{\E[R]-2}{\E[R]},$$
there starts to be a positive probability that a potential parent sampled for $\v{c}_k$ will already have been sampled as a potential parent of some $\v{c}_j\in \v{c}^\downarrow_n$ for $k<j\leq n$.
More precisely, we will follow the process 
$$\mc{H}^n_k=\{\v{u}\in\mc{U}_{k}\-\v{u}\in\mc{P}_j\text{ for some }\v{c}_j\in\v{c}^\downarrow_n\text{ with }j\geq k+1\}.$$
backwards in time (i.e~for $k=n,n-1,\ldots,0$).
Note that $\mc{H}^n_\cdot$ may both grow and shrink in size. During the transition $k+1\mapsto k$, the set $\mc{H}^n_\cdot$ will lose $\v{s}_k$ and $\v{c}_k$ if they were present, but if $\v{c}_k$ was present then any potential parents of $\v{c}_k$ that were not already present will be added in.

We denote the number of elements in the set $\mc{H}^n_k$ by $|\mc{H}^n_k|$.
We will see that when $k\approx n^\beta$,  $|\mc{H}^n_k|$ is of order $k$. 
Consequently, at this point a potential parent of $\v{c}_{k+1}$ has a non-negligible chance of being in $\mc{H}^n_k$;
thus coalescence becomes non-negligible.
In fact, the force of coalescence very quickly becomes strong, with the consequence that for $k\ll n^\beta$ essentially the entire urn $\mc{U}_k$ will be included in $\v{c}_n^\downarrow$, and in particular essentially all sources $\v{s}_j$ with $j\ll n^\beta$ will be included.
However, the fittest source ball $\v{s}_j\in\v{c}^\downarrow_n$ will, with high probability, be born during a `critical window' of time, $j\in[cn^\beta, Cn^\beta]$ where $c>0$ is small and $C<\infty$ is large.

We now summarise the techniques within the proof. When $k\gg n^\beta$ we will use iterative arguments, backwards in time, to construct bounds on the $\N\cup\{0\}$ valued process $|\mc{H}^n_k|$.
The resulting bounds on $|\mc{H}^n_k|$ will eventually break down, because in order to stay tractable they will partially ignore coalescences. However, they will stretch just far enough to see that, when $k\approx Cn^\beta$ for suitably large $C$, the set $\mc{H}^n_k$ comprises a small but non-negligible fraction of $\mc{U}_k$, with positive probability.

We then switch techniques, and for $k\in[cn^\beta, Cn^\beta]$ we aim to establish a fluid limit for the $[0,1]$ valued process $k\mapsto|\mc{H}^n_k|/|\mc{U}_k|$, in reverse time as $k$ decreases. After a suitable rescaling of time, this limit turns out to be an ordinary differential equation, with a stable fixed point at $1$ and an unstable fixed point at $0$; so starting just above zero results in attraction towards $1$. Having established the ODE limit, the key question becomes whether the critical window is actually long enough for the ODE to escape from $0$. Using an artificially longer critical window, by e.g.~taking a larger value of $C$, does not help because this results in an initial condition closer to zero.
However, on escaping $0$, we obtain non-vanishing behaviour for $|\mc{H}^n_k|/|\mc{U}_k|$ during $k\in[cn^\beta, Cn^\beta]$, which results in a positive probability that $\v{s}_k\in\v{c}_n^\downarrow$.

The final step of the proof involves combining the above results with the records process of $\col(\v{s}_k)$. We show that an unusually fit source ball born at around time $k\approx n^\beta$ may start a family that grows to include a non-vanishing proportion of $\mc{U}_n$, as $n\to\infty$.
Thus, extensive condensation occurs.


\begin{remark}\label{rem:stoch_approx}
Let us briefly survey what we might achieve via alternative methods.
For PAC, the techniques used by \cite{MalyshkinPaquette2014} are unavailable because a persistent hub does not emerge.
The techniques used by \cite{DereichEtAl2017} to analyse the Bianconi-Barab\'{a}si model are not available either, because we do not have independently growing families.

It is possible to use stochastic approximation to recover Theorem \ref{thm:condensation_graph}, but doing so results in a description of $\mu$ through the fixed points of a family of differential equations.
This is much less appealing than the intuitive formula \eqref{eq:limiting_colour} provided by the Galton-Watson coupling.

By contrast, it does not seem feasible to prove Theorem \ref{thm:extensive_condensation_graph} via stochastic approximation.
The vertex with greatest degree switches identity infinitely often and this greatly increases the amount of information which must be tracked.
Our attempts to find an alternative proof along such lines resulted in requiring more detailed information about the sensitivity of rather general families of ODEs to small perturbations than we were able to extract.
We discuss this issue a little further, after the key proof, in Remark \ref{rem:ext_vs_non}
\end{remark}


\section{Proof of Theorem \ref{thm:fixed}}
\label{sec:fixed_vertex_family}

In this section we prove Theorem \ref{thm:fixed} which, re-phrased in terms of the urn process $(\mc{U}_n)$, is split across two lemmas:
we prove that $\P[|\v{u}^\uparrow|=\infty]=1$ in Lemma \ref{lem:family_inf}, and that $|\v{u}^{\uparrow}\cap\mc{U}_n|/|\mc{U}_n|\to 0$ in probability in Lemma \ref{lem:family_zero_propn}.

\begin{lemma}\label{lem:family_inf}
Let $\v{u}\in\mc{U}$. Then $\P[|\v{u}^\uparrow|=\infty]=1$.
\end{lemma}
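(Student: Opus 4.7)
Since $|\v{u}^\uparrow|$ is non-decreasing and grows by one exactly when $\v{u}$ or one of its descendants is chosen as the parent of a new cue ball, it suffices to prove the stronger statement that the event $B_n := \{\v{u}\text{ is the parent of }\v{c}_n\}$ occurs for infinitely many $n$ almost surely. Observe that $B_n$ requires both $\v{u}\in\mc{P}_n$ and $\v{u}$ having the maximal colour in $\mc{P}_n$. The former depends only on the fresh uniform samples $\mc{P}_n$, giving $\P[\v{u}\in\mc{P}_n]$ of exact order $1/n$; the latter depends on the random colour structure of $\mc{U}_{n-1}$, which couples the $B_n$ across time and rules out a direct application of the second Borel--Cantelli lemma.

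The first main step is to show $\sum_n \P[B_n]=\infty$. Because $F$ has a density with essential supremum $1$, the colour $\col(\v{u})\in(0,1)$ almost surely; fix $a\in(0,\col(\v{u}))$ with $\P[F\leq a]>0$. The source balls $\v{s}_0,\v{s}_1,\ldots$ form exactly half of $\mc{U}_{n-1}$ and carry i.i.d.~$F$-distributed colours, so by the strong law of large numbers
\[
\liminf_{n\to\infty}\mu_{n-1}([0,a])\geq\tfrac{1}{2}\P[F\leq a]>0 \qquad \text{a.s.}
\]
Conditioning on $\mc{F}_{n-1}$ and $R_n=r$, the probability that $\v{u}$ is one of the $r$ uniform picks and the other $r-1$ picks all land in $\{\text{colour}\leq a\}$ is at least $(r/|\mc{U}_{n-1}|)\,\mu_{n-1}([0,a])^{r-1}$ up to lower-order corrections. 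Hence there exists a positive random $\kappa$ with $\P[B_n\mid\mc{F}_{n-1}]\geq \kappa/n$ for all large $n$, and taking expectations yields $\sum_n\P[B_n]=\infty$.

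The second main step is a pairwise correlation bound $\P[B_n\cap B_m]\leq C\,\P[B_n]\,\P[B_m]$ for $n\neq m$, with $C<\infty$ absolute. Conditioning on $B_n$ perturbs $\mc{U}_{m-1}$ only by pinning the colour of one cue ball at $\col(\v{u})$, which perturbs $\mu_{m-1}$ by $O(1/m)$ and thus $\P[B_m\mid\mc{F}_{m-1}]$ multiplicatively by $1+O(1/m)$, a uniformly controllable error. The Kochen--Stone lemma then delivers $\P[B_n\text{ i.o.}]\geq 1/C>0$, upgraded to probability one by a standard zero--one argument on the tail event $\{B_n\text{ i.o.}\}$ (equivalently, L\'evy's conditional Borel--Cantelli lemma applied to the a.s.~bound from Step~1 bypasses Step~2 and gives the conclusion directly).

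The principal obstacle is the uniform pairwise correlation estimate in Step~2: a single conditioning perturbs the urn only slightly, but these perturbations must be accumulated summably across $m$ while taking into account that $\v{u}$'s descendants, which share $\col(\v{u})$, may also appear in $\mc{P}_m$ and marginally bias the ``fittest'' probability; this will need careful bookkeeping, although the control required is much weaker than the fine genealogical estimates needed later for Theorem~\ref{thm:extensive_condensation_graph}.
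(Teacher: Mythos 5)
Your Step~1, together with the parenthetical remark at the end of Step~2, already constitutes a correct proof, and it is genuinely different from the paper's. Indeed, for any $r_0$ with $\P[R=r_0]>0$ and any $a$ with $\col(\v{u})>a$ and $\P[F\leq a]>0$, one has exactly
$\P[B_n\mid\mc{F}_{n-1}]\geq \P[R=r_0]\,\tfrac{1}{2n}\,\mu_{n-1}([0,a])^{r_0-1}$
on $\{\v{u}\in\mc{U}_{n-1}\}$, and the strong law applied to the i.i.d.\ source colours (which make up half the urn) gives $\liminf_n\mu_{n-1}([0,a])\geq\tfrac12\P[F\leq a]>0$ a.s.; hence $\sum_n\P[B_n\mid\mc{F}_{n-1}]=\infty$ a.s., and L\'evy's extension of the second Borel--Cantelli lemma (for adapted events, $\{B_n\text{ i.o.}\}=\{\sum_n\P[B_n\mid\mc{F}_{n-1}]=\infty\}$ a.s.) yields $B_n$ infinitely often a.s., so $|\v{u}^\uparrow|=\infty$. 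This is arguably simpler than the paper's route, which works with unconditional probabilities: there the events are pruned to ``all potential parents are distinct source balls, the first equals $\v{u}$, the others are less fit'', restricted to a subsequence of times and conditioned on that subsequence, precisely so that the fitnesses involved are i.i.d.\ and independent of the sampling; the pairwise probabilities can then be computed exactly, the Kochen--Stone ratio tends to $1$, and probability one is obtained directly, with no zero--one upgrade needed.

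Your primary route (unconditional Kochen--Stone with a uniform constant $C$), however, has a genuine gap at its final step. With $\P[B_n\cap B_m]\leq C\,\P[B_n]\P[B_m]$ the Kochen--Stone lemma gives only $\P[B_n\text{ i.o.}]\geq 1/C>0$, and there is no ``standard zero--one argument'' to invoke: the $B_n$ are dependent, so Kolmogorov's zero--one law does not apply, and the event is not exchangeable, so neither does Hewitt--Savage. To upgrade along these lines you would need something like a uniform a.s.\ lower bound $\P[B_n\text{ i.o.}\mid\mc{F}_N]\geq\delta>0$ for all $N$ (after which L\'evy's upward theorem forces the indicator to be $1$), i.e.\ conditional versions of both the divergence and the pairwise estimates uniformly in the past---at which point you have done at least as much work as the conditional Borel--Cantelli argument. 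The pairwise bound itself is also justified too optimistically: conditioning on $B_n$ does more than ``pin the colour of one cue ball''---it tilts the law of $\col(\v{u})$ and the colour composition of $\mc{U}_{n-1}$, and inserts a descendant of $\v{u}$ carrying the colour $\col(\v{u})$---so a uniform $C$ would need a real argument (this is exactly the difficulty the paper's restriction to distinct source-ball potential parents is designed to avoid). In short: keep Step~1 and conclude via the conditional Borel--Cantelli lemma; drop the Kochen--Stone detour.
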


\begin{proof}
We consider the case of $\v{u}=\v{s}_0$ and suppose that $\col(\v{s}_0)=\alpha >0$.
It is easily seen that the argument for this case can be adapted to a general ball $\v{u}$.
Let
$$A_n=\{\v{p}_{n,1},\ldots,\v{p}_{n,R_n}\text{ are all source balls}\}\cap\{\v{p}_{n,1}\text{ is the fittest of the }\v{p}_{n,j}\}\cap\{\v{p}_{n,1}=\v{s}_0\}.$$
Note that, for any $n$, the probability that a (given) potential parent is both a source ball and less fit than $\v{s}_0$ is $\frac{\alpha}{2}$. Note also that $\P[\v{p}_{n,1}=\v{s}_0]=\frac{1}{2(n+1)}$, from which it is easily seen that $\P[A_n]$ has order $\frac{1}{n}$.

We will prove the present lemma by showing that $A_n$ occurs infinitely often.
Since the $A_n$ are correlated we will use a version of the Kochen-Stone lemma:
if $(E_n)$ are events such that $\sum_{n=1}^\infty \P[E_n]=\infty$ then
\begin{equation}\label{eq:kochen-stone}
\limsup_{N\to\infty}\frac{\sum_{1\leq m<n<N}\P[E_n]\P[E_m]}{\sum_{1\leq n<m\leq N}\P[E_n\cap E_m]}\leq \P[E_n\text{ infinitely often}].
\end{equation}
This result can be found as Theorem 1 of \cite{Yan2006}.
We will take $E_n=A_{i_n}$, where $i_n$ is defined as follows. Let $r=\inf\{r\in\N\-\P[R=r]>0\}$ and set $q=\P[R=r]$. Define
$i_0=0$ and
$$i_{n+1}=\inf\{l\in\N\-l>i_n,\;R_l=r,\;\text{and the }(\v{p}^l_j)_{j=1}^r\text{ are distinct source balls}\}.$$
The events $\{R_n=r\text{ and }(\v{p}_{n,j})_{j=1}^r\text{ are distinct}\}$ are mutually independent for different values of $n$. Moreover, for any $\epsilon>0$, for large enough $n$ the chance of the $(\v{p}_{n,j})_{j=1}^r$ being distinct is at least $1-\epsilon$, and the chance of them being distinct source balls is at least $(\frac{1}{2})^r-\epsilon$. Therefore, it follows from the strong law of large numbers that
$((\tfrac12)^r-\epsilon)q\leq \liminf_n\frac{i_n}{n}\leq \limsup_n\frac{i_n}{n}\leq (\tfrac12)^rq$ a.s.~and thus, since $\epsilon>0$ was arbitrary,
\begin{align}\label{eq:in_conv}
\P\l[\tfrac{i_n}{n}\to q2^{-r}\r]=1.
\end{align}
Until further notice, we condition on the sequence $(i_n)$ and work with the conditional measure $\P'[\cdot]=\P[\cdot\|\sigma(i_0,i_1,\ldots,)]$. Note that, under $\P'$, the $(\v{p}_{i_n,j})_{j=1}^r$ are conditioned to be distinct source balls, and thus are distributed as a uniformly random subset of $\{\v{s}_0,\ldots,\v{s}_{i_{n-1}}\}$ of size $r$.

We have $\P'[A_{i_n}]=\frac{1}{i_n}\alpha^{r-1}$. Here, the term $\frac{1}{i_n}$ is the probability of $\v{p}_{i_n,1}=\v{s}_0$ (given that $\v{p}_{i_n,1}$ is a source ball) and $\alpha^{r-1}$ is the probability that the other potential parents are all with fitness less than $\alpha$ (given that they are distinct source balls).
We now consider $\P'[A_{i_m}\cap A_{i_n}]$, where $m\leq n$. The probability that $\v{p}_{i_n,1}=\v{p}_{i_m,1}=\v{s}_0$ is $\frac{1}{i_n}\frac{1}{i_m}$. Hence, given $\v{p}_1^{i_n}=\v{p}_1^{i_m}=\v{s}_0$, the probability that $\v{c}_{i_n}$ and $\v{c}_{i_m}$ have no other common potential parents is $\binom{i_n-r-1}{r-1}/\binom{i_n-1}{r-1}$; a short elementary calculation shows that this probability is bounded between $1-\frac{2r^2}{i_n}$ and $1$. Thus,
\begin{align*}
\P'[A_{i_m}\cap A_{i_n}]
&=\frac{1}{i_n}\frac{1}{i_m}\alpha^{r-1}\alpha^{r-1}\l(1+\mc{O}\l(\tfrac{1}{i_n}\r)\r).
\end{align*}
Here, as usual, $f_n=\mc{O}(g_n)$ means that $\limsup_n|f_n/g_n|<\infty$.
Putting these together and cancelling factors of $\alpha$, in view of \eqref{eq:kochen-stone} we are interested to calculate the limit as $N\to\infty$ of
\begin{align*}
I_N=
\frac
{\sum\limits_{1\leq m<n\leq N}\frac{1}{i_n}\frac{1}{i_m}}
{\sum\limits_{1\leq m<n\leq N}\frac{1}{i_n}\frac{1}{i_m}\l(1+\mc{O}\l(\tfrac{1}{i_n}\r)\r)}
\end{align*}
By \eqref{eq:in_conv}, for all $\epsilon>0$, there exists (deterministic) $\mc{N}\in\N$ such that, with probability at least $1-\epsilon$, for all $n\geq \mc{N}$ we have $(1-\epsilon)q2^{-r}\leq \frac{i_n}{n}\leq q2^{-r}(1+\epsilon)$. On this event we have
$$
\frac{(1-\epsilon)^2}{(1+\epsilon)^2}
\liminf_{N\to\infty}
J_N
\leq \limsup_{N\to\infty}I_N \leq
\frac{(1+\epsilon)^2}{(1-\epsilon)^2}
\limsup_{N\to\infty}
J_N
$$
where
$$J_N=\frac
{\sum\limits_{1\leq m<n\leq N}\frac{1}{n}\frac{1}{m}}
{\sum\limits_{1\leq m<n\leq N}\frac{1}{n}\frac{1}{m}\l(1+\mc{O}\l(\tfrac{1}{n}\r)\r)}.
$$
It is easily seen that $J_N\to 1$ as $N\to\infty$, and since $\epsilon>0$ was arbitrary we conclude that also $I_N\to 1$. We thus have \eqref{eq:kochen-stone} (with $E_n=A_{i_n}$), and hence $\P'[A_{i_n}\text{ infinitely often}]=1$. Hence also $\P[A_{n}\text{ infinitely often}]=1$.
\end{proof}

We write $\mc{T}^n_k=\cup_{k=0}^n\mc{W}^n_k$ and $\mc{T}^n=\cup_{k=0}^\infty\mc{T}^n_k$.
Note that $\mc{T}^n=\v{c}_n^\downarrow$, which we accept as a small piece of redundancy in our notation.
We write $\mc{L}^n_k=\mc{T}^n_k\cap\mc{S}$ for the set of source balls in $\mc{T}^n_k$. Note that this is similar too, but not quite the same as, the set of leaves of $\mc{T}^n_k$; because $\mc{T}^n_k$ is curtailed at generation $k$, it may also have a number of cue-balls amongst its $k^{th}$ generation leaves. However, all leaves of $\mc{T}^n$ are source balls.

\begin{lemma}\label{lem:Kn_control}
Let $k\in\N$.
For all $\epsilon>0$ there exists $\delta>0$ and $N\in\N$ such that for all $n\geq N$,
$$\P\l[K_n> k\text{ and }\mc{T}^n_k\cap\mc{U}_{\lfloor\delta n\rfloor}=\emptyset\r]\geq1-\epsilon.$$
In particular, $\P\l[K_n> k\r]\to 1$ as $n\to\infty$.
\end{lemma}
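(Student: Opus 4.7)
The plan is by induction on $k$, taking as inductive statement the full lemma. The base case $k=0$ is vacuous: $\mc{T}^n_0=\{\v{c}_n\}$, we have $K_n\geq 1$ by definition, and $\v{c}_n\notin\mc{U}_{\lfloor\delta n\rfloor}$ for any $\delta<1$ and $n\geq 1$.

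For the inductive step, fix $\epsilon>0$ and apply the inductive hypothesis at level $k$ with tolerance $\epsilon/3$ to obtain $\delta'>0$ and $N'\in\N$ such that $\P[K_n>k,\,\mc{T}^n_k\cap\mc{U}_{\lfloor\delta' n\rfloor}=\emptyset]\geq 1-\epsilon/3$ for $n\geq N'$. On the event $\{K_n>k\}$, Lemma \ref{lem:gw_coupling} identifies $|\mc{W}^n_j|$ with $Z_j$, the Galton-Watson process of offspring distribution \eqref{eq:gw_offspring}, for all $j\leq k$; since the offspring mean $\tfrac12\E[R]$ is finite, so is each $\E[Z_j]$. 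By Markov we may therefore choose a constant $C=C(k,\epsilon)$ with $\P[|\mc{T}^n_k|>C,\,K_n>k]\leq\epsilon/6$. Moreover, because the backward sampling from $\v{c}_n$ is uniform and independent of the $R_i$'s, the $R_i$ attached to cue balls in $\mc{W}^n_k$ are conditionally i.i.d.\ copies of $R$ given the identities $\mc{W}^n_k\cap\mc{C}$. Hence the total number $P_k$ of potential parents drawn at generation $k+1$ satisfies $\E[P_k\1_{|\mc{W}^n_k|\leq C}]\leq C\E[R]$, and a further Markov bound lets us choose $C'=C'(C,\epsilon)$ with $\P[P_k>C',\,|\mc{W}^n_k|\leq C]\leq\epsilon/6$.

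On the intersection of these three good events, of probability at least $1-2\epsilon/3$, each of the at most $C'$ draws at generation $k+1$ is uniform on some $\mc{U}_{i-1}$ with $i>\lfloor\delta' n\rfloor$, hence on a set of size at least $2\lfloor\delta' n\rfloor$. The probability that a single such draw either (i)~lands in $\mc{U}_{\lfloor\delta n\rfloor}$, (ii)~coincides with an existing ball of $\mc{T}^n_k$, or (iii)~coincides with another freshly drawn potential parent is at most $(2\lfloor\delta n\rfloor+2+C+C')/(2\lfloor\delta' n\rfloor)$. Union-bounding over the $C'$ fresh draws yields a total failure probability at generation $k+1$ of at most $C'(\delta/\delta'+o(1))$ as $n\to\infty$. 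Choosing $\delta=\delta(\epsilon,k)$ with $C'\delta/\delta'\leq\epsilon/6$ and $N=N(\epsilon,k)\geq N'$ large enough to absorb the $o(1)$ term into a further $\epsilon/6$, the errors sum to $\epsilon$ and the induction closes. The concluding claim $\P[K_n>k]\to 1$ is then immediate by monotonicity.

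The main obstacle is conceptual rather than technical: both failure modes (a coalescence at generation $k+1$ and a new potential parent reaching into $\mc{U}_{\lfloor\delta n\rfloor}$) are controlled by the same elementary uniform-sampling estimate, via the observation that the already-explored tree and the set $\mc{U}_{\lfloor\delta n\rfloor}$ together occupy only a vanishing fraction of any relevant urn once $\delta$ is chosen small after $C,C'$ are fixed by Markov. Note that only $\E[R]<\infty$ is required here, not the stronger moment assumption used later in the paper.
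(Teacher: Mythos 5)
Your proof is correct, and it rests on exactly the two ingredients the paper's own argument uses: tightness of the size of the explored tree (via the Galton--Watson coupling of Lemma \ref{lem:gw_coupling} and Markov's inequality, needing only $\E[R]<\infty$), and the fact that every potential parent is drawn uniformly from an urn of size at least $2\lfloor\delta' n\rfloor$, so that landing in $\mc{U}_{\lfloor\delta n\rfloor}$, hitting the already-explored tree, or colliding with another fresh draw each costs only $\mc{O}(1/n)$ per draw. The difference is organizational: you run an induction on $k$, handling one generation per step with explicit $\epsilon$-bookkeeping, whereas the paper argues in one shot over the whole depth-$k$ tree (bound $|\mc{T}^n_k|$ by a constant, follow each potential ancestral line of length at most $k+1$ backwards to see it avoids $\mc{U}_{\lfloor\delta n\rfloor}$, then bound the collision probability). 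Your decomposition makes the step the paper treats briefly --- why $\mc{T}^n_k\cap\mc{U}_{\lfloor\delta n\rfloor}=\emptyset$ can be arranged with high probability --- fully explicit, at the cost of the nested constants; note that you do need $\delta\leq\delta'$ so that the inductive event (avoidance of $\mc{U}_{\lfloor\delta' n\rfloor}$) also gives avoidance of $\mc{U}_{\lfloor\delta n\rfloor}$, which your choice $C'\delta/\delta'\leq\epsilon/6$ ensures but which is worth stating. Two small points to tighten: the claim that the $R_i$ attached to cue balls of $\mc{W}^n_k$ are conditionally fresh i.i.d.\ copies of $R$ uses the event $K_n>k$ to guarantee those cue balls are distinct from all cue balls of earlier generations (the same observation as in the proof of Lemma \ref{lem:gw_coupling}); and your failure mode (iii) should be read as a sequential, birthday-type bound over the at most $C'$ fresh draws so that it also covers two coincident draws made by the same cue ball, since the definition of $K_n$ counts those as coalescences too. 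With those readings, avoiding your three failure modes does imply both $K_n>k+1$ and $\mc{T}^n_{k+1}\cap\mc{U}_{\lfloor\delta n\rfloor}=\emptyset$, and the induction closes as you say; your closing remark about the moment assumption also matches the paper, which invokes only $\E[R]<\infty$ here.
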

\begin{proof}
We remark that if no coalescences occurred in $\v{c}^\downarrow_n$, then $K_n=\infty$ and the statement of the lemma holds trivially.
Let us refer to the single element of $\mc{W}^n_0$ as the `root'.
Fix $k\in\N$.
Since $\P[R<\infty]=1$,
it is easily seen that by choosing suitably large $A\in\N$ we obtain $\sup_n\P[|\mc{T}^n_{k}|\geq A]\leq\epsilon$.
For each $\v{b}\in\mc{T}^n_j$ there is a potential ancestral line, containing at most $k+1$ balls, between $\v{b}$ and the root.
Following this ancestral line backwards in time, the potential parents were chosen uniformly at random from the current urn.
By choosing $\delta>0$ small, we may control the chance that any of the (at most $k+1$) such potential parents along this line were sampled from within $\mc{U}_{\lfloor\delta n\rfloor}$.
Thus, we may choose $\delta\in(0,1)$ and $N\in\N$ such that $\sup_{n\geq N}\P[\mc{T}^n_k\cap\mc{U}_{\lfloor\delta n\rfloor}\neq\emptyset]\leq\epsilon$.  

Conditional on the event $\{\mc{T}^n_k\cap\mc{U}_{\lfloor\delta n\rfloor}=\emptyset\text{ and }|\mc{T}^n_{k}|\leq A\}$, each potential parent of each element of $\mc{T}^n_k$ was sampled uniformly from a set of balls with at least $\delta n$ elements.
The expected number of such potential parents is $\mc{O}(A\E[R])=\mc{O}(1)$, and the chance of choosing any particular ball as a potential parent is $\mc{O}(\frac{1}{\delta n})$.
Hence, the probability of seeing the same parent twice tends to zero as $n\to\infty$, and consequently $\P[K_n\leq k]\to 0$ as $n\to\infty$.
The result follows.
\end{proof}

\begin{lemma}\label{lem:leaf_production}
For all $k,n\in\N$, it holds that
$\P\l[|\mc{L}^n_k|< k/2\text{ and }\mc{W}^n_k\neq\emptyset\r]\leq (\tfrac12)^{k/2}$
\end{lemma}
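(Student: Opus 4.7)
My plan is to leverage the Galton--Watson coupling of Lemma \ref{lem:gw_coupling}, under which (for $k<K_n$) each ball in $\mc{T}^n_k$ carries an independent Bernoulli$(\tfrac12)$ source/cue label, with cue balls having $R$-distributed offspring. The case $k\geq K_n$, where coalescences arise, can be handled by coupling to the non-coalescent Galton--Watson process: a coalescence can only identify a ball already seen, which deletes it from the count of sources in $\v{c}_n^\downarrow$ without helping $\mc{W}^n_k$ become non-empty, so after a short monotonicity remark it suffices to treat the GW regime.

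I would first establish the strengthened sub-case $|\mc{L}^n_m|=0$. Letting $h_m = \P[|\mc{L}^n_m|=0,\,\mc{W}^n_m\neq\emptyset]$ and conditioning on the children of $\v{c}_n$ (and exploiting the iid nature of the resulting subtrees) yields the recursion $h_m \leq g(h_{m-1}/2)$ with $g(z)=\E[z^R]$. Since $R\geq 1$ forces $g(z)\leq z$ on $[0,1]$, an easy induction (with base $h_0=1$, $h_1=g(1/2)\leq 1/2$) gives $h_m \leq 2^{-m}$.

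For the full event $\{|\mc{L}^n_k|<k/2,\,\mc{W}^n_k\neq\emptyset\}$, I would combine this with a spine-based argument. On $\{\mc{W}^n_k\neq\emptyset\}$ there exists a canonical cue-ball path $\v{b}_0=\v{c}_n,\v{b}_1,\ldots,\v{b}_{k-1}$; at each step, either at least one potential parent of $\v{b}_{j-1}$ is a source (thereby consuming one of the strictly fewer than $k/2$ permitted sources) or all of its potential parents are cue, which is a conditional event of probability at most $\E[(1/2)^R]\leq 1/2$. Accumulating roughly $k/2$ such factors yields the advertised $(1/2)^{k/2}$.

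The main obstacle I foresee is the adaptive nature of the spine: because the cue-ball path is selected after observing the tree, the putative $(1/2)$ factors along it are not straightforwardly independent. The cleanest way around this, which I would adopt, is an induction on $k$ using the recursive subtree structure: iterate the generating-function inequality on the subtree rooted at the canonical cue ball in $\mc{W}^n_{\lceil k/2\rceil}$ that has descendants in $\mc{W}^n_k$, carefully tracking the remaining source budget so that on the complement of $\{|\mc{L}^n_{\lceil k/2\rceil}|=0\}$ the recursion can be closed with the same exponent $k/2$.
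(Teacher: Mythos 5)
Your reduction to the coalescence-free Galton--Watson regime is where the argument breaks. The quantity $|\mc{L}^n_k|$ counts \emph{distinct} source balls, so a coalescence can only decrease it while leaving $\{\mc{W}^n_k\neq\emptyset\}$ unchanged; hence the event $\{|\mc{L}^n_k|<k/2,\ \mc{W}^n_k\neq\emptyset\}$ is \emph{more} likely in the true genealogy than in an unfolded, non-coalescent tree, and a bound proved in the GW regime does not transfer to the urn. Your ``monotonicity remark'' therefore points in the wrong direction. Moreover, Lemma \ref{lem:gw_coupling} provides no coupling at all beyond generation $K_n$ (the GW process simply continues independently there), so once coalescences occur there is no coupled object to compare against. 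The paper's proof avoids the GW coupling entirely: it observes that on $\{|\mc{L}^n_k|<k/2,\ \mc{W}^n_k\neq\emptyset\}$ there must exist a potential ancestral line of $\v{c}_n$ containing $k$ cue balls of which at least $k/2$ have no source ball among their potential parents, and then uses that a potential ancestral line cannot contain the same cue ball twice, while each cue ball's potential parents are sampled at distinct times, independently, from urns that always consist of exactly half source balls; each ``all potential parents are cue'' event has probability at most $\tfrac12$, giving $(\tfrac12)^{k/2}$.

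Second, even granting the GW regime, the step that carries the weight of the lemma---passing from the zero-source case to a budget of $k/2$ sources---is not actually carried out. Your recursion $h_m\le g(h_{m-1}/2)$ is fine (in a tree, $R\ge 1$ makes ``no sources to depth $m$'' imply survival to depth $m$), but it only treats $|\mc{L}^n_k|=0$. For the full event you correctly identify the obstruction, namely that the spine is selected after observing the tree so the factors of $\tfrac12$ are not obviously independent, but the proposed fix (an induction ``carefully tracking the remaining source budget'') is stated only as an intention: no inductive hypothesis in the pair (depth, budget) is formulated and there is no verification that it closes with exponent $k/2$. Note also that your phrase ``consuming one of the permitted sources'' tacitly assumes that the sources met by distinct spine nodes are distinct, which is again a coalescence-free statement and hence rests on the invalid reduction above. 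The paper's line-based argument is precisely what sidesteps both difficulties in a few lines.
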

\begin{proof}
Let $A^n_k$ denote the event that there is a potential ancestral line of $\v{c}_n$ containing at least $k$ cue balls, and that at least $k/2$ of these cue balls had no source balls amongst their potential parents.
If $\mc{W}^n_k$ is non-empty then, by definition of $\mc{W}^n_k$, there must be a potential ancestral line of $\v{c}_n$ that intersects $\mc{W}^n_k$.
Note that this potential ancestral line contains $k$ cue balls, corresponding to $k$ generations of $\v{c}^{\downarrow}_n$.
If, additionally, $|\mc{L}^n_k|< k/2$ then the event $A^n_k$ must occur.
In summary, $\{|\mc{L}^n_k|< k/2\text{ and }\mc{W}^n_k\neq\emptyset\}\sw A^n_k$.

Each potential parent has probability $\frac12$ of being a source ball. 
Hence, for all $j$, $\P[\mc{P}_j\cap\{\v{s}_0,\v{s}_1,\ldots\}=\emptyset]\leq 1/2$.
Since a potential ancestral line cannot include the same cue ball twice, $\P[A^n_k]\leq (1/2)^{k/2}$.
The stated result follows.
\end{proof}

\begin{lemma}\label{lem:family_zero_propn}
Let $\v{u}\in\mc{U}$. Then $\frac{|\v{u}^{\uparrow}\cap\mc{U}_n|}{|\mc{U}_n|}\to 0$ in probability as $n\to\infty$.
\end{lemma}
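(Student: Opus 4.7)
The plan is to reduce the convergence in probability to showing $\P[\v{c}_j\in\v{u}^\uparrow]\to 0$ as $j\to\infty$, and to establish this limit via the Galton-Watson coupling together with the duality identity \eqref{eq:duality}. Since $|\mc{U}_n|=2(n+1)$ is deterministic, Markov's inequality reduces matters to showing $\E[|\v{u}^\uparrow\cap\mc{U}_n|]/n\to 0$. Source balls have no parents, so their ancestral lines end at themselves and hence
\[
|\v{u}^\uparrow\cap\mc{U}_n|\leq 1+\sum_{j=0}^n\1_{\{\v{c}_j\in\v{u}^\uparrow\}}.
\]
A Cesaro argument then reduces the task to showing $a_j:=\P[\v{c}_j\in\v{u}^\uparrow]\to 0$.

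To bound $a_j$, let $\alpha=\col(\v{u})$. Absolute continuity of $F$ with $\esssup F=1$ gives $\alpha<1$ and $\P[F\leq\alpha]<1$ almost surely, so I work on this event. Fix $\epsilon>0$ and an integer $k\geq 1$, and apply Lemma~\ref{lem:Kn_control} to pick $\delta>0$ and $N\in\N$ such that, writing
\[
E_j:=\{K_j>k\}\cap\bigl\{\mc{T}^j_k\cap\mc{U}_{\lfloor\delta j\rfloor}=\emptyset\bigr\},
\]
we have $\P[E_j]\geq 1-\epsilon$ for all $j\geq N$. For $j$ additionally large enough that $\v{u}\in\mc{U}_{\lfloor\delta j\rfloor}$, the event $E_j$ forces $\v{u}\notin\mc{T}^j_k$.

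The argument now splits on whether the Galton-Watson approximation to $\v{c}_j^\downarrow$ survives to generation $k$. On $E_j\cap\{\mc{W}^j_k=\emptyset\}$ the potential ancestry terminates by generation $k$, so $\v{c}_j^\downarrow=\mc{T}^j_k\not\ni\v{u}$, and hence $\v{c}_j\notin\v{u}^\uparrow$. On $E_j\cap\{\mc{W}^j_k\neq\emptyset\}$, Lemma~\ref{lem:leaf_production} gives $|\mc{L}^j_k|\geq k/2$ except on an event of probability at most $(\tfrac12)^{k/2}$. The combinatorial genealogy $\mc{T}^j_k$ is independent of the fitness sequence $(F_n)$ (as recorded at the start of the proof of Lemma~\ref{lem:gw_coupling}), and on $E_j$ every source ball in $\mc{L}^j_k$ has index exceeding $\lfloor\delta j\rfloor\geq 1$, so conditional on the unordered set $\mc{L}^j_k$ the associated fitnesses are i.i.d.\ copies of $F$. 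The probability that all of them are $\leq\alpha$ is thus at most $\P[F\leq\alpha]^{k/2}$. If even one exceeds $\alpha$ then by \eqref{eq:duality}, since $\mc{L}^j_k\subseteq\v{c}_j^\downarrow\cap\mc{S}$, we get $\col(\v{c}_j)>\col(\v{u})$, which rules out $\v{c}_j\in\v{u}^\uparrow$. Summing the three error contributions yields
\[
\limsup_{j\to\infty}a_j\leq\epsilon+\bigl(\tfrac12\bigr)^{k/2}+\P[F\leq\alpha]^{k/2},
\]
and letting $k\to\infty$ followed by $\epsilon\to 0$ completes the proof.

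The step requiring most care is the conditional independence used in the non-extinction case: although $\mc{L}^j_k$ is itself random and depends on the same sampling mechanism that decides whether potential ancestors are of cue or source type, the key observation is that it is independent of the fitness \emph{values} assigned to source balls, so that one may legitimately apply the bound $\P[F\leq\alpha]^{k/2}$. This fitness-independence of the combinatorial dual, already noted in the proof of Lemma~\ref{lem:gw_coupling}, makes the bookkeeping routine.
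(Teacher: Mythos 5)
Your proposal is correct and follows essentially the same route as the paper's proof: reduce the claim to showing $\P[\v{c}_j\in\v{u}^\uparrow]\to 0$, then combine Lemma \ref{lem:Kn_control} and Lemma \ref{lem:leaf_production} with the extinction/survival dichotomy for the potential ancestry, bounding the survival case by the chance that the many source balls in $\mc{L}^j_k$ (whose fitnesses are independent of $\col(\v{u})$ on your event $E_j$) all fall below $\col(\v{u})$. The only difference is cosmetic: your final term $\P[F\leq\alpha]^{k/2}$ involves the random quantity $\alpha=\col(\v{u})$, so strictly one should bound $a_j$ by $\epsilon+(\tfrac12)^{k/2}+\E\l[\P[F\leq\col(\v{u})\|\col(\v{u})]^{k/2}\r]$ and let $k\to\infty$ by dominated convergence (using $\P[F\leq\col(\v{u})]<1$ almost surely), a one-line repair that plays the role of the paper's cruder bound at \eqref{eq:last_long_or_lose_2}.
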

\begin{proof}
We will show $L^1$ convergence to zero, which is equivalent to convergence in probability because $|\v{u}^{\uparrow}\cap\mc{U}_n|/|\mc{U}_n|\leq 1$. Note that
$$\E\l[\frac{|\v{u}^\uparrow\cap\mc{U}_n|}{|\mc{U}_n|}\r]=\frac{1}{|\mc{U}_n|}\l(\1_{\l(\v{u}\in\mc{S}\r)}+\sum\limits_{k=0}^n\P[\v{c}_k\in \v{u}^\uparrow]\r).$$
Since $|\mc{U}_n|=2n+2$, it suffices to show that $\P[\v{c}_n\in \v{u}^{\uparrow}]\to 0$ as $n\to\infty$.

Note that $\P[F_1< F_2]$ is the probability that one source ball has fitness strictly less than that of another.
Since the fitnesses are independent, we have $\P[F_1< F_2]\leq \frac{1}{2}$.
Thus
\begin{equation}\label{eq:last_long_or_lose_2}
\P\l[\v{c}_n\in\v{u}^\uparrow\text{ and }|\mc{L}^n_k|\geq k/2\r]\leq (\tfrac{1}{2})^{k/2}
\end{equation}
because, on the event  that $\v{c}_n\in\v{u}^\uparrow$ and $|\mc{L}^n_k|\geq k/2$, at least $k/2$ source balls in $\mc{T}^n_k$ must either have fitness strictly less than that of $\v{u}$.
Let $\epsilon>0$. Let $\delta>0$, $k\in\N$, $N\in\N$, to be chosen shortly (dependent upon $\epsilon$).
For all $n\geq N$ we have
\begin{align}
\P\l[\v{c}_n\in\v{u}^\uparrow\r]
&\leq \epsilon+\P\l[\v{c}_n\in\v{u}^\uparrow,\; K_n\geq k\text{ and }\mc{T}^n_k\cap\mc{U}_{\lfloor\delta n\rfloor}=\emptyset\r]\notag\\
&\leq \epsilon+\P\l[\v{c}_n\in\v{u}^\uparrow,\; K_n\geq k\text{ and }\mc{W}^n_k\neq\emptyset\r].\notag\\
&\leq \epsilon+\l(\tfrac{1}{2}\r)^{k/2}+\P\l[\v{c}_n\in\v{u}^\uparrow,\; K_n\geq k\text{ and }\mc{W}^n_k\neq\emptyset\text{ and }|\mc{L}^n_k|\geq k/2\r]\notag\\
&\leq \epsilon+\l(\tfrac{1}{2}\r)^{k/2}+\l(\tfrac{1}{2}\r)^{k/2}\notag
\end{align}
The first line of the above follows from Lemma \ref{lem:Kn_control}, from which we obtain $N$ and $\delta$.
By increasing $N$, if necessary, we may assume that $\v{u}\in\mc{U}_{\lfloor\delta n\rfloor}$.
The second line then follows because, given that $\v{c}_n\in\v{u}^\uparrow$ and $\mc{T}^n_k\cap\mc{U}_{\lfloor\delta n\rfloor}=\emptyset$, the ancestral line linking $\v{c}_n$ to $\v{u}$ must extend beyond $\mc{T}^n_k$, and in particular $\mc{W}^n_k$ must be non-empty.
The third line then follows by Lemma \ref{lem:leaf_production}.
The final line follows from \eqref{eq:last_long_or_lose_2}. Choosing $k$ large enough that $2(\frac12)^{k/2}<\epsilon$ obtains that for all $n\geq N$, $\P\l[\v{c}_n\in\v{u}^\uparrow\r]\leq 3\epsilon$. This completes the proof.
\end{proof}

\section{Proof of Theorem \ref{thm:condensation_graph}}
\label{sec:condensation}

In this section we prove Theorem \ref{thm:condensation_graph}.
Throughout Section \ref{sec:condensation} we will adopt the conditions and notation used in the statement of Theorem \ref{thm:condensation_graph} 
In particular, let $L$ be the number of leaves on a Galton-Watson tree with offspring distribution \eqref{eq:gw_offspring}
and let $\mu$ be the measure on $[0,1]$ defined by \eqref{eq:limiting_colour}.
Let $(C_i)$ be a sequence of i.i.d.~copies of $F$.

Our proof proceeds by first establishing Theorem \ref{thm:condensation_graph} for a fitness distribution $F$ with only two possible values, $0$ and $1$.
Note that, as defined in Section \ref{sec:our_model}, the model does not currently allow for such a case because we had specified that the fitness distribution $F$ must be continuous on $[0,1]$. 
For general $F$, the extra difficulty is that we must handle the possibility that there may not be a unique fittest vertex (resp.~ball) within $P_n$ (resp.~$\mc{P}_n$), defined by \eqref{eq:parents} (resp.~\eqref{eq:potential_parents}).
This extra difficulty is not more than an irritation, which is why we excluded it in Section \ref{sec:our_model}.
It is convenient to first describe the case of non-absolutely continuous $F$ at the level of the urn process $\mc{U}_n$
-- which, we recall, is a projection of the graph $\mc{G}_n$ that records degrees via \eqref{eq:urn_degrees} but forgets the rest of the graph structure.
We then show how to reconstruct $(\mc{G}_n)$.

To define $\mc{U}_n$, 
Definition \ref{def:urn_model} still applies exactly as written,
but to define the associated genealogy we must specify how parent-child relationships are defined in the (additional) case that the potential parents $\mc{P}_n$ do not contain a unique fittest ball.
If there is not a unique fittest element of $\mc{P}_n$, 
then the parent of $\v{c}_n$ is chosen uniformly at random from the fittest balls within $\mc{P}_n$. 
Subject to this extra rule, the genealogical structure in Section \ref{sec:genealogy_repr} remains well defined.
Let us denote the parent of $\v{c}_n$ as simply $\v{q}_n\in\mc{P}_n$.

To define $\mc{G}_n$, as before we will take the balls of $\mc{U}_n$ to be set of half-edges of $\mc{G}_n$. 
We will specify how to form these half-edges into a graph, conditionally given the various processes $(\mc{U}_k, (\v{p}_{k,l}), \v{q}_n, R_k, F_k)_{k=0}^\infty$.
We will proceed inductively.
As before, we take $\mc{G}_0$ to be a single vertex with a self-loop, and $\mc{U}_0=\{\v{c}_0,\v{s}_0\}$ contains two balls of the same colour corresponding to two half edges of the same vertex.
Given $\mc{G}_{n-1}$, we already know the vertex set $\mc{V}_{n-1}$ and we know which half-edges within $\mc{U}_{n-1}$ are attached to which vertices. We attach a single new vertex $v_n$ via single new edge, as follows. 
\begin{enumerate}
\item For $l=1,\ldots,R_n$ we define $p_{n,l}\in \mc{V}_{n-1}$ to be the vertex attached to the half-edge $\v{p}_{n,l}\in\mc{U}_{n-1}$.
We define $P_n=\{p_{n,1},\ldots,p_{n,R_n}\}$.
Thus the $(p_{n,l})_{l=1}^{R_n}$ are i.i.d.~degree-weighted samples from $\mc{V}_{n-1}$.

\item We attach the half-edge $\v{c}_n$ to same vertex as its parent half-edge $\v{q}_n\in\mc{U}_{n-1}$ is already attached to (within $\mc{G}_{n-1}$). We specify that $(\v{c}_n,\v{s}_n)$ will together comprise a new edge, and we attach $\v{s}_n$ to a new vertex $v_n$. This new vertex is assigned fitness $F_n$.
\end{enumerate}
It is immediate that, when $F$ is continuous, the above mechanism precisely matches Definition \ref{def:graph_model}.
Moreover, it preserves the connections \eqref{eq:mu_urn} and \eqref{eq:urn_degrees} between $\mc{G}_n$ and $\mc{U}_n$.
In Section \ref{sec:cond_01} we will apply Lemmas \ref{lem:gw_coupling}, \ref{lem:Kn_control} and \ref{lem:leaf_production} in this extended context. Their proofs go through exactly as before -- in fact this is immediate because they were concerned only
with potential ancestors, the identities of which are unaffected by fitness values.
Restricting to only two colours, the equivalent statement to Theorem \ref{thm:condensation_graph} is as follows.

\begin{prop}\label{prop:condensation_urn_finite}
Take the fitness space $\mathscr{F}=\{0,1\}$, where $\P[F=0]$ and $\P[F=1]$ are both positive. Then for $a=0,1$,
\begin{equation}\label{eq:condensation_urn_finite}
\mu_n([0,a])\stackrel{a.s.}{\to}\frac12\P[F\leq a]+\frac12\l(\P\l[L<\infty\text{ and }\max_{i=1,\ldots,L}C_i\leq a\r]+\1_{\{a=1\}}\P[L=\infty]\r).
\end{equation}
\end{prop}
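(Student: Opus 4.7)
The case $a=1$ is immediate since $\mu_n([0,1])=1$ and the right-hand side of \eqref{eq:condensation_urn_finite} at $a=1$ also equals $1$, so I focus on $a=0$. By \eqref{eq:mu_urn},
\[
\mu_n(\{0\}) = \frac{S_n^0 + C_n^0}{|\mc{U}_n|}, \qquad S_n^0 := \sum_{k=0}^n \1_{\col(\v{s}_k)=0}, \quad C_n^0 := \sum_{k=0}^n \1_{\col(\v{c}_k)=0}.
\]
By Definition \ref{def:urn_model} the source colours $(\col(\v{s}_k))$ are i.i.d.\ copies of $F$, so the strong law of large numbers gives $S_n^0/|\mc{U}_n| \to \tfrac12\P[F=0]$ almost surely. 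Since $\max_i C_i = 0$ iff $C_i=0$ for all $i$, the target \eqref{eq:condensation_urn_finite} at $a=0$ equals $\tfrac12\P[F=0] + \tfrac12\,\E[\P[F=0]^L \1_{\{L<\infty\}}]$, so it remains to show $C_n^0/|\mc{U}_n| \to \tfrac12\,\E[\P[F=0]^L \1_{\{L<\infty\}}]$ almost surely.

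I first compute $\lim_n \P[\col(\v{c}_n)=0]$. By \eqref{eq:duality}, $\col(\v{c}_n)=0$ iff every source ball in $\v{c}_n^\downarrow$ has colour $0$, and source-ball colours are i.i.d.\ and independent of the potential ancestry (cf.\ the opening remark of the proof of Lemma \ref{lem:gw_coupling}), so
\[
\P[\col(\v{c}_n)=0 \mid \v{c}_n^\downarrow] = \P[F=0]^{N_n}, \qquad N_n := |\v{c}_n^\downarrow \cap \mc{S}|.
\]
Fix $k\in\N$ and apply Lemma \ref{lem:gw_coupling} to couple $(|\mc{W}^n_j|)_{j<K_n}$ to a Galton--Watson tree with offspring law \eqref{eq:gw_offspring}. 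On $\{K_n > k,\ \mc{W}^n_k = \emptyset\}$ the coupled tree is fully revealed before any coalescence, so $N_n$ equals $L$ (the coupled tree's total number of leaves). On $\{K_n > k,\ \mc{W}^n_k \neq \emptyset\}$, Lemma \ref{lem:leaf_production} forces $N_n \geq k/2$ outside an event of probability at most $(1/2)^{k/2}$, so $\P[F=0]^{N_n}$ is negligible for large $k$. Finally $\P[K_n \leq k] \to 0$ by Lemma \ref{lem:Kn_control}. Taking expectations, sending $n \to \infty$ and then $k \to \infty$, I obtain $\P[\col(\v{c}_n)=0] \to \E[\P[F=0]^L \1_{\{L<\infty\}}]$.

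The hard part will be upgrading this convergence of means to almost-sure convergence of the empirical proportion $C_n^0/(n+1)$. The plan is to show that, with $Y_k := \1_{\col(\v{c}_k)=0}$, the covariances $\mathrm{Cov}(Y_j, Y_k)$ vanish as $\min(j,k)\to\infty$: applying Lemmas \ref{lem:Kn_control} and \ref{lem:leaf_production} jointly to $\v{c}_j^\downarrow$ and $\v{c}_k^\downarrow$, with high probability each tree involves only $\mc{O}(1)$ balls while each of their potential parents is drawn uniformly from $\Theta(n)$ balls, so a union bound makes the two trees disjoint and forces asymptotic independence of $Y_j$ and $Y_k$. This yields $\var(C_n^0) = o(n^2)$, and Chebyshev then gives convergence of $C_n^0/(n+1)$ in probability to the value computed in the previous paragraph. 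The deterministic Lipschitz bound $|\mu_{n+1}(\{0\}) - \mu_n(\{0\})| = \mc{O}(1/n)$, combined with a Borel--Cantelli argument along the geometric subsequence $n_r := \lfloor \rho^r \rfloor$, then promotes convergence in probability to almost-sure convergence. Adding the source-ball contribution from the first paragraph completes the proof; the only genuinely delicate point is the covariance bound, since it requires uniform control on how the ancestral trees of two distinct cue balls can interact as both indices grow.
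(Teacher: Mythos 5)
Your first step -- identifying $\lim_n \P[\col(\v{c}_n)=0]$ via the Galton--Watson coupling together with Lemmas \ref{lem:Kn_control} and \ref{lem:leaf_production} -- is essentially the paper's own derivation of \eqref{eq:lim_Pcn_eq}, and it is fine. The divergence, and the problem, is in the upgrade to almost-sure convergence of the cue-ball proportion. Your chain is: vanishing covariance $\Rightarrow$ $\var(C^0_n)=o(n^2)$ $\Rightarrow$ Chebyshev gives convergence in probability $\Rightarrow$ Borel--Cantelli along a geometric subsequence plus the $\mc{O}(1/n)$ step bound gives almost-sure convergence. The last implication does not follow: Borel--Cantelli needs $\sum_r\P\l[|C^0_{n_r}/n_r-\ell|>\epsilon\r]<\infty$, and convergence in probability (equivalently $\var(C^0_n)=o(n^2)$) supplies no summable rate. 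Moreover, convergence in probability together with the deterministic bound $|\mu_{n+1}(\{0\})-\mu_n(\{0\})|=\mc{O}(1/n)$ does not by itself imply almost-sure convergence: one can construct $[0,1]$-valued processes with $\mc{O}(1/n)$ increments that make slow excursions on blocks $[2^r,2^{r+1}]$ with block probabilities tending to $0$ but not summable, so they converge in probability and not almost surely. To make your route work you need a quantitative covariance bound, say $\mathrm{Cov}(Y_j,Y_k)=\mc{O}(\min(j,k)^{-c})$ for some $c>0$, and the argument you sketch cannot deliver one: Lemmas \ref{lem:Kn_control} and \ref{lem:leaf_production} are purely qualitative (fixed number of generations, fixed tree-size cutoff, fixed $\epsilon$, no rates), so applying them jointly to $\v{c}_j^\downarrow$ and $\v{c}_k^\downarrow$ only yields $\mathrm{Cov}(Y_j,Y_k)\to 0$ with no rate. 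You would have to redo that control with cutoffs growing with the index (tree depth of order $\log n$, tree-size and urn-depth cutoffs polynomial in $n$, using $\E[R^2]<\infty$ to control tree sizes), which is a substantial piece of work that your sketch does not acknowledge.

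For comparison, the paper avoids second moments entirely at this stage: it writes the cue-ball proportion $\nu_n$ as a stochastic approximation scheme $\nu_{n+1}-\nu_n=\frac1n\l(g(\nu_n)+\xi_n+\mc{O}(n^{-1})\r)$ with $g(\nu)=M_R\l(\frac{\nu+\lambda}{2}\r)-\nu$ (the $\mc{O}(n^{-1})$ error coming from the probability of duplicate potential parents, which is where $\E[R^2]<\infty$ enters), invokes Corollary 2.7 of \cite{Pemantle2007} to get almost-sure convergence of $\nu_n$ to the zero set of $g$, checks that $g$ has exactly one root in $[0,1]$, and only then uses the mean convergence \eqref{eq:lim_Pcn_eq} (your first step) plus dominated convergence to identify the limit with the Galton--Watson formula. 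So your proposal is correct up to and including the limit of $\P[\col(\v{c}_n)=0]$, but the passage to almost-sure convergence is a genuine gap as written; either supply the quantitative covariance rates, or replace that part with a stochastic-approximation (or comparable martingale) argument as the paper does.
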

With Proposition \ref{prop:condensation_urn_finite} in hand, it is straightforward to deduce Theorem \ref{thm:condensation_graph}. We give this argument first, to be followed by the proof of Proposition \ref{prop:condensation_urn_finite}.

\begin{proof}[Of Theorem \ref{thm:condensation_graph}, subject to Proposition \ref{prop:condensation_urn_finite}.]
Recall that Theorem \ref{thm:condensation_graph} assumes a uniform fitness distribution on $[0,1]$.
Fix $a\in[0,1)$. Define $f(x)=\1\{x>a\}$, and define a new, two colour, urn process $\wt{\mc{U}}_n$, with the same set of balls as $\mc{U}_n$ and the same distribution for $R_n$, by considering balls with fitness $x$ to have the new fitness $\wt{x}=f(x)$. Thus, our new urn process has fitness space $\{0,1\}$ and fitness distribution $\wt{F}$ satisfying
$\P[\wt{F}=0]=\P[F\in[0,a]]$, $\P[\wt{F}=1]=\P[F\in(a,1]]$.
Let us write $\wt\mu_n$ for the empirical measure of colours within $\wt{\mc{U}}_n$, analogous to \eqref{eq:mu_urn}.

Proposition \ref{prop:condensation_urn_finite} applies to our new urn process $\wt{\mc{U}}_n$. Hence,
\begin{align*}
\mu_n([0,a])&=\wt\mu_n(0)\\
&\stackrel{a.s.}{\to} \frac12\P[f(F)=0]+\frac12\P\l[L<\infty\text{ and }\max_{i=1,\ldots,T}f(C_i)=0\r]\\
&=\frac12\P[F\leq a]+\frac12\P\l[L<\infty\text{ and }\max_{i=1,\ldots,T}C_i\leq a\r].
\end{align*}
Since $\mu_n([0,1])=1$, we have $\mu_n([0,1])\to \mu([0,1])=1$. Thus, we have $\mu_n([0,a])\stackrel{a.s.}{\to}\mu([0,a])$ for all $a\in[0,1]$.
This implies that $\int f d\mu_n\stackrel{a.s.}{\to}\int f d\mu$ for all right-continuous step functions $f:[0,1]\to\R$.
Let $C[0,1]$ denote the space of continuous functions from $[0,1]\to\R$, with the $||\cdot||_\infty$ norm.
Any $f\in C[0,1]$ can be approximated uniformly by right-continuous step functions,
so it follows that $\int f d\mu_n \stackrel{a.s.}{\to}\int f d\mu$ for all $f\in C[0,1]$.
Moreover, the sequence of measures $(\P\circ \mu_n)$ is tight, because they are measures on the compact space $[0,1]$.
Thus, the conditions for Corollary 2.2 of \cite{BPR2006} hold, with the conclusion that, almost surely, $\mu_n$ converges weakly to $\mu$.
\end{proof}

\subsection{Proof of Proposition \ref{prop:condensation_urn_finite}}
\label{sec:cond_01}

Recall that the conditions of Proposition \ref{prop:condensation_urn_finite} specify that the fitness space is a two point set $\{0,1\}$, and that each fitness occurs with positive probability. 
We assume these conditions for the duration of Section \ref{sec:cond_01}.
The first step of the proof is to show that, as $n\to\infty$,
\begin{equation}\label{eq:lim_Pcn_eq}
\P[\col(\v{c}_n)=0]\to \P\l[L<\infty\text{ and }\max_{i=1,\ldots,L}C_i=0\r]
\end{equation}
To see this, recall that Lemma \ref{lem:gw_coupling} states that $W^n_k=|\mc{W}^n_k|$ has the same distribution as a Galton-Watson process, with offspring distribution \eqref{eq:gw_offspring}, for generations $k\leq K_n$.
Let $(\hat{W}^n_k)_{k\geq 0}$ be a Galton-Watson process with this same offspring distribution, and couple $\hat{W}^n_k$ and $W^n_k$ such that $\hat{W}^n_k=W^n_k$ for all $n$ and $k\leq K_n$. Let $\hat{L}$ be the number of leaves of $(\hat{W}^n_k)$, and let $(C_i)$ be a sequence of i.i.d.~random variables, each with distribution $F$.
We note that the offspring distribution $M$, given by \eqref{eq:gw_offspring}, of $\hat{W}^n_k$ does not depend on $n$. Since $\P[M=0]\in(0,1)$, it is easily seen that
\begin{equation}\label{eq:gw_explode_or_die}
\P[\hat{L}<\infty\text{ and }\hat{W}^n_k\neq 0]
\end{equation}
does not depend on $n$ and, moreover, tends to zero as $k\to\infty$.
We note also that for all $k,n\in\N$,
\begin{align}
\P\l[\col(\v{c}_n)=0\text{ and }\mc{W}^n_k\neq\emptyset\r]
&\leq (\tfrac12)^{k/2}+\P\l[\col(\v{c}_n)=0\text{ and }|\mc{L}^n_k|\geq k/2\r]\notag\\
&\leq (\tfrac12)^{k/2}+\P[F=0]^{k/2}.\label{eq:too_many_leaves}
\end{align}
In the above, the first line follows by Lemma \ref{lem:leaf_production}, and the second line follows because $\col(\v{c}_n)=0$ when, and only when, every source ball in $\v{c}^\downarrow_n$ has colour $0$.

Let $\epsilon>0$, let $k\in\N$ be such that \eqref{eq:too_many_leaves} and \eqref{eq:gw_explode_or_die} are both $\leq\epsilon$, and let $N\in\N$ be chosen as in Lemma \ref{lem:Kn_control}. Then, for $n\geq N$ we have
\begin{align*}
\P[\col(\v{c}_n)=0]
&=\mc{O}(\epsilon)+\P\l[\max_{i=1,\ldots,L}C_i=0,\; K_n\geq k\text{ and }W^n_k=0\r]\\
&=\mc{O}(\epsilon)+\P\l[\max_{i=1,\ldots,\hat{L}}C_i=0,\; K_n\geq k\text{ and }\hat{W}^n_k=0\r]\\
&=\mc{O}(\epsilon)+\P\l[\max_{i=1,\ldots,\hat{L}}C_i=0,\; \hat{W}^n_k=0\r]\\
&=\mc{O}(\epsilon)+\P\l[\max_{i=1,\ldots,\hat{L}}C_i=0,\; \hat{L}<\infty\r]
\end{align*}
In the above, the first line follows by \eqref{eq:too_many_leaves} and Lemma \ref{lem:Kn_control}, and the observation that $\col(\v{c}_n)=0$ if and only if all leaves of $(W^n_k)$ have colour $0$.
The second line follows by the coupling of $W^n_k$ and $\hat{W}^n_k$ introduced above.
The third line follows by applying Lemma \ref{lem:Kn_control} again,
and the final line then follows by \eqref{eq:gw_explode_or_die}.
With this in hand, \eqref{eq:lim_Pcn_eq} follows because $L$ and $\hat{L}$ have the same distribution.

We now upgrade \eqref{eq:lim_Pcn_eq} into the full statement of Proposition \ref{prop:condensation_urn_finite}.
Note that the case $a=1$ of \eqref{eq:condensation_urn_finite} claims that $1\to 1$, which is true, so it remains only to prove the case $a=0$. We have
\begin{align*}
\mu_n(0)
&=\frac{1}{|\mc{U}_n|}\sum\limits_{\v{b}\in\mc{U}_n}\1_{\l(\col(\v{b})=0\r)}\\
&=\frac{|\mc{S}_n|}{|\mc{U}_n|}\frac{1}{|\mc{S}_n|}\sum\limits_{\v{b}\in\mc{S}_n}\1_{\l(\col(\v{b})=0\r)}+
\frac{|\mc{C}_n|}{|\mc{U}_n|}\stackrel{\nu_n}{\overbrace{\frac{1}{|\mc{C}_n|}\sum\limits_{\v{b}\in\mc{C}_n}\1_{\l(\col(\v{b})=0\r)}}}.
\end{align*}
Noting that $|\mc{S}_n|/|\mc{U}_n|$ and $|\mc{C}_n|/|\mc{U}_n|$ are both equal to $\frac12$, we obtain from the strong law of large numbers that the first term of the above tends (almost surely) to $\frac12\P[F=0]$, and it remains to consider the term labelled $\nu_n$.
Thus, to prove \eqref{eq:condensation_urn_finite} we must show that
\begin{equation}\label{eq:cn_conv}
\nu_n\stackrel{a.s.}{\to}\P\l[L<\infty\text{ and }\max_{i=1,\ldots,L}C_i=0\r].
\end{equation}
From \eqref{eq:lim_Pcn_eq} we already know that $\E[\nu_n]$ converges to the right hand side of the above equation so, by dominated convergence, equation \eqref{eq:cn_conv} follows if we can show that the random sequence $\nu_n$ converges almost surely to a deterministic limit $\nu$. To establish this fact we will use the `usual' machinery of stochastic approximation (c.f.~Remark \ref{rem:stoch_approx}).

Let $(\mc{F}_n)$ be the filtration generated by $(\nu_n)$.
Let $A_{n}$ be the event that the potential parents $(\v{p}^{n}_l)_{l=1}^{R_n}$ of $\v{c}_{n}$ are all distinct, and let $A_{n}^c$ denote its complement.
These potential parents are i.i.d.~uniform samples of $\mc{U}_n$ and $|\mc{U}_n|=2n+2$, so
\begin{align*}
\P[A_{n}|R_{n}=r]
=\prod\limits_{l=1}^{r-1}\l(1-\frac{l}{2n+2}\r)
\end{align*}
Thus $\P[A_{n}|R_{n}=r]\geq (1-\frac{r}{2n})^{r}$ and by Bernoulli's inequality when $2n\geq r$ we have $\P[A_{n}|R_{n}=r]\geq 1-\frac{r^2}{2n}$. We thus obtain
\begin{align}\label{eq:P_dupl}
\P[A^c_{n}]
&=
\sum\limits_{r=1}^{2n}\P[A_{n}^c|R_{n}=r]\P[R_{n}=r] + \sum\limits_{r=2n+1}^{\infty}\P[A_{n}^c|R_{n}=r]\P[R_{n}=r]\notag\\
&\leq
\sum\limits_{r=1}^{2n}\frac{r^2}{2n}\P[R_{n}=r] + \P[R_{n}\geq 2n]\notag\\
&\leq 
\frac{1}{2n}\E[R^2]+\frac{1}{2n}\E[R].
\end{align}
Note that in the final line of the above we use that $R_n$ is an independent copy of $R$, with a distribution that does not depend on $n$.

Let $M_R$ denote the moment generating function of $R_{n}$, which does not depend on $n$.
Let us write $\lambda=\P[F=0]$ for the probability that a given source balls has colour $0$.
Then,
\begin{align}
\E[\nu_{n+1}-\nu_n\|\mc{F}_n]
&=\frac{1}{|\mc{C}_{n+1}|}\l(-\nu_n+\P\l[\col(\v{c}_{n+1})=0\|\mc{F}_n\r]\r)\notag\\
&=\frac{1}{|\mc{C}_{n+1}|}\l(-\nu_n+\E\l[\1_{A_{n+1}}\sum\limits_{r=1}^\infty\1_{\{R_{n+1}=r\}}\sum\limits_{s=0}^r\binom{r}{s}\frac{1}{2^r}(\nu_n)^{r-s}(\lambda)^{s}\r]+\mc{O}(\P[A_{n+1}^c])\r)\notag\\
&=\frac{1}{|\mc{C}_{n+1}|}\l(-\nu_n+\E\l[\1_{A_{n+1}}\sum\limits_{r=1}^\infty\1_{\{R_{n+1}=r\}}\l(\frac{\nu_n}{2}+\frac{\lambda}{2}\r)^r\r]+\mc{O}\l(n^{-1}\r)\r)\notag\\
&=\frac{1}{|\mc{C}_{n+1}|}\l(-\nu_n+\sum\limits_{r=1}^\infty\P[R_{n+1}=r]\l(\frac{\nu_n}{2}+\frac{\lambda}{2}\r)^r+\mc{O}\l(n^{-1}\r)\r)\notag\\
&=\frac{1}{|\mc{C}_{n+1}|}\l(-\nu_n+M_R\l(\frac{\nu_n}{2}+\frac{\lambda}{2}\r)+\mc{O}\l(n^{-1}\r)\r)\label{eq:stoch_approx_pre}
\end{align}
In the above, to deduce the second line from the first, we condition on the number $R_{n+1}=r$ of potential parents of $\v{c}_{n+1}$, and also on the number $s$ of potential parents of $\v{c}_n$ which are source balls; then, if all these potential parents are distinct, $(\nu_n)^{r-s}(\lambda)^s$ is the probability that all potential parents of $\v{c}_{n+1}$ have colour $0$. 
We also use \eqref{eq:P_dupl}.
The third line follow from elementary calculations, and the fourth line follows by using \eqref{eq:P_dupl} again.

From \eqref{eq:stoch_approx_pre}, writing $g(\nu)=M_R(\frac{\nu+\lambda}{2})-\nu$, and noting that $|\mc{C}_n|=n+1$, $|\nu_n|\leq 1$, we obtain
$\E[\nu_{n+1}-\nu_n\|\mc{F}_n]=\frac{1}{n}\l(g(\nu_n)+\mc{O}(n^{-1})\r)$
and thus the stochastic approximate equation
$$\nu_{n+1}-\nu_n=\frac{1}{n}\l(g(\nu_n)+\xi_{n}+\mc{O}(n^{-1})\r)$$
holds with $\xi_n=\nu_{n+1}-\E[\nu_{n+1}\|\mc{F}_n]$. Since $|\xi_n|\leq 2$ and $g:[0,1]\to[0,1]$ is continuous, it follows from Corollary 2.7 of \cite{Pemantle2007} that $\nu_n$ converges almost surely to the zero set of $g$.
Recalling that $\lambda\in(0,1)$, we have $g(0)=M_R(\frac{\lambda}{2})>0$ and $g(1)=M_R(\frac{1+\lambda}{2})-1<0$.
Moreover, $g'(\nu)=\frac12 M'_R(\frac{\nu+\lambda}{2})-1$
is an increasing function, thus $g$ has at most one turning point in $[0,1]$ and hence also precisely one root in $[0,1]$. Therefore, $\nu_n$ converges almost surely to this root.
This completes the proof of Proposition \ref{prop:condensation_urn_finite}.


\section{Proof of Theorem \ref{thm:extensive_condensation_graph}}
\label{sec:extensive_condensation}

In this section we prove Theorem \ref{thm:extensive_condensation_graph}, which asserts that
extensive condensation occurs in the model.
We assume the conditions of this theorem for the duration of Section \ref{sec:extensive_condensation};
in particular that $\E[R]>2$ with $\E[R^2]<\infty$.
From now on, we will write
\begin{align}
\zeta=\frac{\E[R]}{2},
\quad\quad\quad\quad
\beta=\frac{\E[R]-2}{\E[R]}=\frac{\zeta-1}{\zeta}.\label{eq:zeta_beta_def}
\end{align}
Note that $\zeta\in(1,\infty)$ and $\beta\in(0,1)$.
We will introduce a third variable $\xi\in(\zeta,\infty)$ that also depends only on the distribution of $R$, in Lemma \ref{lem:Gk2_upper}.

We use the following extensions of Landau notation. 
If $a_{k,n}$ and $b_{k,n}$ are a pair of doubly indexed strictly positive (real-valued) sequences, defined for all $k,n\in\N$ such that $k\leq n$, then
\begin{align*}
a_{k,n}\lesssim b_{k,n}&\;\text{ means that }\;\limsup_{k,n\to\infty} \frac{a_{k,n}}{b_{k,n}}\leq 1,\\
a_{k,n}\gtrsim b_{k,n}&\;\text{ means that }\;\liminf_{k,n\to\infty}\frac{b_{k,n}}{a_{k,n}}\geq 1,\\
a_{k,n}\sim b_{k,n}&\;\text{ means that }\;\lim_{k,n\to\infty}\frac{a_{k,n}}{b_{k,n}}=1.
\end{align*}
Note that $\lesssim,\gtrsim$ and $\sim$ do not explicitly specify which pair of variables ($k,n$ above) are to be used in the limit, but this should be clear from the context in all cases.
Our requirement for this notation comes from Lemma \ref{lem:key_asym}, which provides a key two-variable asymptotic that will be used within Section \ref{sec:ancestry_branching}.
We use the same notation for sequences $a_n,b_n$ of a single variable, with the same meaning, including when we take $k=k_n$ dependent on $n$.

\subsection{Proof of Theorem \ref{thm:extensive_condensation_graph}}
\label{sec:ext_cond_proof}

The proof of Theorem \ref{thm:extensive_condensation_graph} relies 
on behaviour within the critical window $[cn^\beta, Cn^\beta]$, 
where $c$ is suitably small and $C$ is suitably large.
We will show that the fittest source ball born during this window has a non-negligible expected family size at time $n$, as $n\to\infty$.
This, in turn, will be proved by showing that $\v{c}_n$ has non-vanishing probability to be descended from the fittest source ball within 
$\{\v{s}_{cn^\beta},\ldots,\v{s}_{n^\beta}\}$.
Note that such a ball has a non-vanishing probability to be the fittest ball within 
$\{\v{s}_{0},\ldots,\v{s}_{Cn^\beta}\}$.

\begin{remark}\label{rem:Cinteger}
We assume without loss of generality that $Cn^\beta$ and $cn^\beta$ are integer. This can be achieved by adding a small quantity, at most $n^{-\beta}$, to $c,C$. The difference is sufficiently small that it does not change our arguments, so we continue to regard $c,C$ as fixed constants, independent of $n$.
\end{remark}

Let $\v{s}_{k(n)}$ denote the fittest source ball in $\{\v{s}_{0},\ldots,\v{s}_{Cn^\beta}\}$.
The proof of Theorem \ref{thm:extensive_condensation_graph} has two key ingredients.
The first, Proposition \ref{prop:ancestry_stablization}, will be used to show that $\P[\v{s}_{k(n)}\in\v{c}^\downarrow_n]$ is bounded away from $0$ as $n\to\infty$. 
The second, Proposition \ref{prop:branching_phase_pickup}, shows that during time $[Cn^\beta,n]$ not many source balls are included in the genealogy of $\v{c}^\downarrow_n$; few enough that none of them are likely to be fitter than $\v{s}_{k(n)}$.
Let us now state these two results rigorously, for which we require some notation.

Consider balls that are potential ancestors of $\v{c}^\downarrow_n$.
Each such ball has a natural multiplicity associated to it:
the number of times it was chosen as a potential parent of some (other) potential ancestor of $\v{c}^\downarrow_n$.
Thus, counting with multiplicity means we are ignoring coalescences, within the genealogy of $\v{c}^\downarrow_n$.
For $k\leq n$ define
\begin{equation}\label{eq:Nk_def}
N^n_{k}=\sum\limits_{j=k+1}^n \1_{\l(\v{c}_j\in\v{c}_n^\downarrow\r)}\sum\limits_{l=1}^{R_j}\1_{\l(\v{p}_{j,l}=\v{s}_k\r)}
\end{equation}
to be the number of times that $\v{s}_k$ is a potential ancestor of $\v{c}_n$, counted with multiplicity.
For $i\leq i'\leq n$,
\begin{equation}\label{eq:Nkip_def}
N^n_{i,i'}=\sum\limits_{k=i}^{i'} N^n_{k}.
\end{equation}
In words, $N^n_{i,i'}$ is the number of source balls $\{\v{s}_{i},\ldots,\v{s}_{i'}\}$, counted with multiplicity, that are potential ancestors of $\v{c}_n$.
Thus, $|\v{c}^\downarrow_n\cap\{\v{s}_{i},\ldots,\v{s}_{i'}\}|\leq N^n_{i,i'}$.
Define also
\begin{align}
\mc{H}^n_k&=\{\v{u}\in\mc{U}_{k}\-\v{u}\in\mc{P}_j\text{ for some }\v{c}_j\in\v{c}^\downarrow_n\text{ with }j\geq k+1\}\notag\\
H^n_{k}&=|\mc{H}^n_k|\label{eq:H_def}
\end{align}
Note that $\mc{H}^n_k$ is the set of balls that were born (non-strictly) before time $k$,
and were a potential parent of some $\v{c}_j\in\v{c}^\downarrow_n$ where $j> k$.
The quantity $H^n_{k}$ counts such balls \textit{without} multiplicity.
We note that the urn contains $2l+2$ balls at time $l$, so $\frac{1}{2l+2}H^n_{l}$ 
represents the fraction of the urn included in $\mc{H}^n_{l}$ at time $l$.

\begin{prop}\label{prop:ancestry_stablization}
For all $C>\xi^{1/\zeta}$ and $c\in(0,2)$ we have
$\E\l[\inf\limits_{l=cn^\beta,\ldots,2n^\beta}\frac{1}{2l+2}H^n_{l}\r]\gtrsim \frac{\zeta^2}{4\xi}\frac{\zeta}{\zeta+4^{\zeta}e^{c/C}}.$
\end{prop}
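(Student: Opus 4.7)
The strategy follows the two-phase picture outlined in Section~\ref{sec:proofs_outline}. Writing $\rho^n_l=H^n_l/(2l+2)$, the plan is first to obtain, by iterating the reverse-time recursion over the branching phase $l\in[Cn^\beta,n]$, a lower bound $\E[\rho^n_{2n^\beta}]\gtrsim \zeta^2/(4\xi)$, and then to propagate this across the critical window $[cn^\beta,2n^\beta]$ via a fluid-limit/ODE comparison. The reverse-time recursion
$$\mc{H}^n_{l-1} = \bigl(\mc{H}^n_l\setminus\{\v{s}_l,\v{c}_l\}\bigr)\,\cup\,\1\{\v{c}_l\in\mc{H}^n_l\}\cdot(\mc{P}_l\setminus\mc{H}^n_l),$$
combined with the exchangeability heuristic that a newborn ball at time $l$ lies in $\mc{H}^n_l$ with probability approximately $\rho^n_l$, yields the one-step drift $\E[H^n_{l-1}-H^n_l\mid H^n_l]\approx 2\rho^n_l[(\zeta-1)-\zeta\rho^n_l]$. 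In the logarithmic timescale $s=-\log(l/n^\beta)$ this is the logistic ODE $d\rho/ds=\zeta\rho(1-\rho)$, whose solutions increase monotonically from the unstable fixed point $0$ to the stable fixed point $1$, so in the fluid limit the infimum over $l\in[cn^\beta,2n^\beta]$ is attained at $l=2n^\beta$.

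For the branching phase I would drop the $-2\zeta(\rho^n_l)^2$ coalescence term to obtain a valid drift lower bound and iterate the resulting first-moment inequality backwards from $l=n$ down to $l=2n^\beta$. The coalescence error in this linearisation is controlled by the matching upper bound supplied by Lemma~\ref{lem:Gk2_upper}: the constant $\xi$ bounds the amplification of coalescence corrections, and the hypothesis $C>\xi^{1/\zeta}$ ensures $\rho^n_l$ remains small enough on $[Cn^\beta,n]$ for the linearisation to be accurate, producing the first factor $\zeta^2/(4\xi)$. On the critical window I would couple $\rho^n_l$ to the logistic ODE via a Doob decomposition whose martingale part has predictable quadratic variation summable over the log-length of the window, so that a Burkholder/Doob $L^2$-estimate gives uniform closeness to the ODE solution in probability. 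Combined with the monotonicity of the ODE and a Markov-type inequality converting the expectation bound into a probability bound on $\rho^n_{2n^\beta}$, this yields the infimum bound. The second factor $\zeta/(\zeta+4^\zeta e^{c/C})$ emerges from making these constants explicit: $4^\zeta$ quantifies the worst-case multiplicative amplification of a perturbation by the linearised logistic flow across the window, and $e^{c/C}$ absorbs the linearisation error inherited from the branching-phase iteration.

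The main technical obstacle will be the uniform control of the fluid-limit approximation near the unstable fixed point $\rho=0$: the ODE flow amplifies additive martingale fluctuations by a factor of order $e^{\zeta s}$ across the window, so any fluctuations at the start of the window must already be negligible before amplification. This forces the branching-phase bound to be sharp enough that $\rho^n_{2n^\beta}$ is bounded below by a \emph{deterministic} positive constant on a positive-probability event, not merely in expectation; arranging this through the matched lower/upper moment pair furnished by the condition $C>\xi^{1/\zeta}$ and Lemma~\ref{lem:Gk2_upper} is the delicate part of the argument.
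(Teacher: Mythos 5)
Your two–phase architecture (a branching-phase moment iteration followed by a logistic-ODE fluid limit) and your drift heuristics are essentially the right ones, but the way you split the two phases creates a genuine gap. You plan to push the moment iteration from $l=n$ all the way down to $l=2n^\beta$ and to start the ODE comparison only on $[cn^\beta,2n^\beta]$, with an initial condition at $2n^\beta$ that is bounded below by a deterministic constant on a positive-probability event, extracted from the matched first/second moment bounds. The iteration cannot reach that far. First, a direction slip: dropping the $-\zeta\rho^2$ coalescence term \emph{increases} the backward drift, so it yields an upper bound on $\E[H^n_l]$, not a lower bound; for a lower bound you must retain the coalescence correction and dominate it by the second moment, which is exactly what Lemma \ref{lem:Gk2_upper} is for. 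Doing so, the lower bound one obtains at scale $l\sim An^\beta$ has the form $\frac{2\zeta}{A^\zeta}\l(1-\frac{\xi}{8\zeta A^\zeta}\r)$ (Lemma \ref{lem:branching_end_control} and its analogue for $H$, Lemma \ref{lem:br_end_H}), which for $A=2$ is vacuous as soon as $\xi\geq 8\zeta\,2^\zeta$ — and $\xi$, which depends on $\E[R^2]$, can be arbitrarily large. This is precisely why the hypothesis is $C>\xi^{1/\zeta}$ and why the paper stops the moment phase at $Cn^\beta$ with $C$ large: there a Paley--Zygmund argument based on Lemma \ref{lem:br_end_H} gives $\P\l[\tfrac{1}{2Cn^\beta+2}H^n_{Cn^\beta}\geq\tfrac{\zeta}{4C^\zeta}\r]\gtrsim\tfrac{\zeta^2}{4\xi}$ — note that $\tfrac{\zeta^2}{4\xi}$ enters as a \emph{probability}, not as a bound on $\E[\rho^n_{2n^\beta}]$ as in your plan — and the fluid limit is then run over the whole window $[cn^\beta,Cn^\beta]$, starting from the small value $\approx\zeta/4C^\zeta$ and using the full time $\log(C/c)$ to escape the unstable fixed point. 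The values on the sub-window $[cn^\beta,2n^\beta]$ are read off from the explicit logistic solution: the factor $4^\zeta=2^{2\zeta}$ is its value at the worst point $l=2n^\beta$ (not a perturbation-amplification constant), and $e^{c/C}$ is the slack lost in the time change and Portmanteau step (not a branching-phase linearisation error), so your plan would not reproduce the stated constants either.

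A secondary difference: for the fluid limit the paper does not use a Doob-decomposition/Gr\"onwall estimate but a weak-convergence argument — the time-homogeneous Markov reformulation $(Y^n_j,Z^n_j)$ of Lemma \ref{lem:Xnj_markov}, a continuous-time embedding, tightness and generator convergence (Lemmas \ref{lem:X_tightness}, \ref{lem:Xt_conv}), the Portmanteau theorem applied to the open set of Lemma \ref{lem:portmanteau_prep}, the time-change control of Lemma \ref{lem:ts}, and an artificial thinning of $\mc{H}^n_{Cn^\beta}$ to pin the initial condition to an exact value. Your martingale route could plausibly be made rigorous over a window of bounded logarithmic length, but only if it is fed a macroscopic, positive-probability initial condition at the top of that window; as explained above, the only scale at which the moment bounds supply one is $Cn^\beta$ with $C>\xi^{1/\zeta}$, which forces the ODE window to be $[cn^\beta,Cn^\beta]$ rather than $[cn^\beta,2n^\beta]$. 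Until the intermediate claim at $2n^\beta$ is replaced by this structure, the proposal does not close.
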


\begin{prop}\label{prop:branching_phase_pickup}
For all $C\in(0,\infty)$ we have
$\E[N^n_{Cn^\beta,n}]\lesssim\frac{n^\beta}{\beta C^{\zeta-1}}.$
\end{prop}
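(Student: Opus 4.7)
The plan is to reduce Proposition \ref{prop:branching_phase_pickup} to an asymptotic estimate on $p_j := \P[\v{c}_j \in \v{c}_n^\downarrow]$, then to plug that estimate into an explicit formula for the expectation. Unpacking \eqref{eq:Nk_def} and \eqref{eq:Nkip_def} and swapping the orders of summation,
\begin{equation*}
N^n_{Cn^\beta,n} = \sum_{j=Cn^\beta+1}^n \1\l(\v{c}_j \in \v{c}_n^\downarrow\r) \cdot \bigl|\bigl\{l : \v{p}_{j,l} \in \{\v{s}_{Cn^\beta},\ldots,\v{s}_{j-1}\}\bigr\}\bigr|.
\end{equation*}
The event $\{\v{c}_j \in \v{c}_n^\downarrow\}$ is determined by $(\mc{P}_{j+1},\ldots,\mc{P}_n)$ and is therefore independent of the $\sigma(\mc{P}_j)$-measurable multiplicity. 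Since each $\v{p}_{j,l}$ is uniform on $\mc{U}_{j-1}$ and $|\mc{U}_{j-1}|=2j$, Wald's identity gives the expectation of the multiplicity as $\E[R](j-Cn^\beta)/(2j) = \zeta(j-Cn^\beta)/j$, so
\begin{equation}
\label{eq:N_expr_plan}
\E[N^n_{Cn^\beta,n}] \;=\; \zeta \sum_{j=Cn^\beta+1}^n p_j \cdot \frac{j-Cn^\beta}{j}.
\end{equation}

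The next step is the pointwise bound $p_j \lesssim \zeta n^{\zeta-1}/j^\zeta$ for $j \leq n-1$. This follows from the union bound
$$\1(\v{c}_j \in \v{c}_n^\downarrow) \;\leq\; \sum_{j'=j+1}^n \1(\v{c}_{j'} \in \v{c}_n^\downarrow) \cdot \1(\v{c}_j \in \mc{P}_{j'}),$$
the same independence observation, and the estimate $\P[\v{c}_j \in \mc{P}_{j'}] \leq \E[R]/(2j') = \zeta/j'$, yielding $p_j \leq a_j$, where $a_n := 1$ and $a_j := \zeta \sum_{j'=j+1}^n a_{j'}/j'$ for $j<n$. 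Differencing consecutive terms gives the one-step recursion $a_j = a_{j+1}(1 + \zeta/(j+1))$ with seed $a_{n-1} = \zeta/n$, and iterating yields $a_j = (\zeta/n)\prod_{i=j+1}^{n-1}(1 + \zeta/i) \sim \zeta n^{\zeta-1}/j^\zeta$ by the usual log-sum approximation.

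Substituting this bound into \eqref{eq:N_expr_plan} and approximating the sum by
$$\zeta^2 n^{\zeta-1} \int_{Cn^\beta}^n \l(\frac{1}{j^\zeta} - \frac{Cn^\beta}{j^{\zeta+1}}\r) dj,$$
the antiderivative evaluates (the contribution at $j=n$ being of strictly smaller order) to
$\frac{1}{(Cn^\beta)^{\zeta-1}}\l(\frac{1}{\zeta-1} - \frac{1}{\zeta}\r) = \frac{1}{\zeta(\zeta-1)(Cn^\beta)^{\zeta-1}}.$
Applying the identities $\zeta/(\zeta-1) = 1/\beta$ and $(\zeta-1)(1-\beta) = \beta$ that follow from \eqref{eq:zeta_beta_def}, this simplifies to $n^\beta/(\beta C^{\zeta-1})$, as required. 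I would stress that keeping the factor $(j-Cn^\beta)/j$ in \eqref{eq:N_expr_plan} is essential: replacing it by $1$ would produce the looser $\zeta n^\beta/((\zeta-1) C^{\zeta-1})$, so it is precisely the cancellation between the two antiderivative terms that converts the prefactor $\zeta/(\zeta-1)$ into the sharp $1/\beta$.

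The hardest step will be pinning down the correct prefactor in $p_j \lesssim \zeta n^{\zeta-1}/j^\zeta$. The formal continuum analogue of the recursion is the ODE $f'(y) = -\zeta f(y)/y$, whose only positive solution is $y^{-\zeta}$; this violates $p_j \leq 1$ and cannot be the truth as stated. One therefore has to work with the discrete iteration and use the boundary value $a_n = 1$ to extract the missing factor $\zeta/n$, which is what ultimately yields $n^{\zeta-1}$ rather than $n^\zeta$ in the final answer.
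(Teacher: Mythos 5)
Your proposal is correct and proves the stated bound with the right constant, but it is organised differently from the paper's argument, so a comparison is worthwhile. The paper keeps the double sum in the order $\E[N^n_{Cn^\beta,n}]=\tfrac12\sum_{k=Cn^\beta}^n\E[A^n_k]$ (the factor $\tfrac12$ coming from $(\dagger)$, which replaces $\1_{(\v{p}_{j,l}=\v{s}_k)}$ by $\tfrac12\,\1_{(\v{p}_{j,l}\in\{\v{c}_k,\v{s}_k\})}$), then reuses its existing machinery: Lemma \ref{lem:Gk_1st} gives $\E[A^n_k]=\E[G^n_{k+1}]/(k+1)$, Lemma \ref{lem:Gk_1st_upper} gives $\E[G^n_{k+1}]\lesssim 2\zeta(n/k)^{\zeta-1}$, and the sum $\sum_{k\geq Cn^\beta}k^{-\zeta}\leq (Cn^\beta)^{1-\zeta}/(\zeta-1)$ already yields the prefactor $\zeta/(\zeta-1)=1/\beta$. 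You instead sum over $j$ first, obtain the exact identity with weight $\zeta(j-Cn^\beta)/j$ via independence of $\{\v{c}_j\in\v{c}_n^\downarrow\}$ from $\mc{P}_j$ plus Wald, and prove the input bound $p_j=\P[\v{c}_j\in\v{c}_n^\downarrow]\lesssim\zeta n^{\zeta-1}j^{-\zeta}$ from scratch through the union-bound recursion $p_j\leq\zeta\sum_{j'>j}p_{j'}/j'$; that recursion is a scalar shadow of the paper's recursion for $\E[G^n_k]$, and both routes rest on the same product asymptotic (Lemma \ref{lem:key_asym}). What your route buys is self-containedness: for this proposition you never need $G^n_k$, $A^n_k$ or the reverse filtration $\mathscr{G}_{k+1}$ (the paper needs that machinery anyway for Lemma \ref{lem:Gk2_upper} and Proposition \ref{prop:ancestry_stablization}). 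When writing it up, make two standard points explicit: the $j=n$ term, where $p_n=1$ rather than $\lesssim\zeta/n$, contributes $O(1)=o(n^\beta)$; and the asymptotic $a_j\sim\zeta n^{\zeta-1}j^{-\zeta}$ must be applied uniformly over $j\geq Cn^\beta$, which Lemma \ref{lem:key_asym} supplies (fix $\epsilon$, obtain $K$, note $Cn^\beta\to\infty$), exactly as the paper does in its own summations.

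One slip, confined to your closing remark and not affecting the proof: dropping the weight $(j-Cn^\beta)/j$ gives $\zeta\sum_j p_j\lesssim\zeta^2 n^{\zeta-1}\sum_j j^{-\zeta}\approx\frac{\zeta^2}{\zeta-1}\frac{n^\beta}{C^{\zeta-1}}=\frac{\zeta\, n^\beta}{\beta C^{\zeta-1}}$, so the loss is a factor $\zeta$; the quantity you display as the ``looser'' bound, $\zeta n^\beta/((\zeta-1)C^{\zeta-1})$, is in fact equal to the target $n^\beta/(\beta C^{\zeta-1})$, since $\zeta/(\zeta-1)=1/\beta$. Your substantive point stands — within your decomposition the weight is what recovers the stated constant, whereas the paper gets it without this refinement because its summand $\E[G^n_{k+1}]/(2(k+1))$ already aggregates the $j$-sum with the weights $k/j$ — but the mis-stated constant in that remark should be corrected.
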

In both propositions, the asymptotic inequality is understood to apply as $n\to\infty$.
We will give the proof of Theorem \ref{thm:extensive_condensation_graph} now, subject to these two propositions. 
We will then prove Proposition \ref{prop:branching_phase_pickup} in Section \ref{sec:ancestry_branching}, and Proposition \ref{prop:ancestry_stablization} in Section \ref{sec:ancestry_stablization}.

\begin{proof}[Of Theorem \ref{thm:extensive_condensation_graph}, subject to Propositions \ref{prop:ancestry_stablization} and \ref{prop:branching_phase_pickup}.]
Let $c,C$ satisfy $0<c<C<\infty$ with $c<2$ and $C>\xi^{1/\zeta}$, with precise values to be chosen later.
Recall that $\v{s}_{k(n)}$ denotes the (almost surely unique) fittest source ball within $\{\v{s}_0,\ldots,\v{s}_{Cn^\beta}\}$,
and let 
$$S_n=|\v{s}_{k(n)}^\uparrow\cap\mc{U}_n|$$ 
denote the size of the family of $\v{s}_{k(n)}$ at time $n$.
Let $Q^{j,n}$ be the event that all sources in $\{\v{s}_{l}\-l=Cn^\beta+1,\ldots,j-1\}\cap\v{c}^\downarrow_j$ are less fit than $\v{s}_{k(n)}$.
Note that
\begin{align}
\E[S_n]
&=1+\sum\limits_{j=0}^n\P\l[\{\v{s}_{k(n)}\in\v{c}^\downarrow_j\} \cap Q^{j,n}\r]\notag\\
&\geq\sum\limits_{j=2^{-1/\beta}n}^{n}\P\l[\{\v{s}_{k(n)}\in\v{c}^\downarrow_j\} \cap Q^{j,n}\r].\label{eq:Ln}
\end{align}
In the above, on the final line, the summation includes $j\in\N$ such that $2^{-1/\beta}n\leq j\leq n$.
Consider $n$ large enough that $Cn^\beta<2^{-1/\beta}n$, and take such a $j$. Then
\begin{align}
\P\l[\{\v{s}_{k(n)}\in\v{c}^\downarrow_j\} \cap Q^{j,n}\r]
&\geq \P\l[\v{s}_{k(n)}\in\v{c}^\downarrow_j\r]-\P\l[\Omega\sc Q^{j,n}\r]\label{eq:skncjQ}
\end{align}
We now handle the first term on the right of \eqref{eq:skncjQ}.
Let $P_n$ be the event that $cn^\beta\leq k(n)\leq n^\beta$.
We have
\begin{align}
\P\l[\v{s}_{k(n)}\in\v{c}^\downarrow_j\r]\notag
&=\P\l[\v{s}_{k(n)}\in\v{c}^\downarrow_j\,\Big|\,P_n\r]\P[P_n]\notag\\
&=\P\l[\v{s}_{k(n)}\in \mc{H}^j_{k(n)}\,\Big|\,P_n\r]\P[P_n]\notag.
\end{align}
Note that $k(n)$ is uniform on $\{0,1,\ldots,Cn^\beta\}$ and measurable with respect to the fitness values $(F_i)$.
The event $P_n$ is also measurable with respect to $(F_i)$.
These fitness values are independent of the sampling of potential parents, hence $\mc{H}^j_l$ and $H^j_l$ are independent of $k(n)$ and $P_n$.
Moreover, for all $j\geq i>l$, the potential parents of $\v{c}_{i}\in\v{c}^\downarrow_j$ were sampled uniformly and i.i.d.~from $\mc{U}_{i}$; conditional on any such potential parent $\v{p}$ being within $\mc{U}_l$, the distribution of $\v{p}$ is uniform on $\mc{U}_l$.
Thus, the conditional distribution of $\mc{H}^j_l$ given $H^j_l$ is also uniform (on the subsets of $\mc{U}_l$ that have size $H^j_l$).
Consequently, 
\begin{align}
\P\l[\v{s}_{k(n)}\in\v{c}^\downarrow_j\r]&=\E\l[\frac{1}{2k(n)+2}H^j_{k(n)}\,\Big|\,P_n\r]\P[P_n]\notag\\
&\geq \E\l[\inf_{l=cn^\beta,\ldots,n^\beta}\frac{1}{2l+2}H^j_{l}\r]\P[P_n]\notag\\
&\geq \E\l[\inf_{l=cj^\beta,\ldots,2j^\beta}\frac{1}{2l+2}H^j_{l}\r]\P[P_n]\label{eq:kncj_lower_pre}
\end{align}
Note that the third line above follows because $cj^\beta\leq cn^\beta$ and $n^\beta\leq 2j^\beta$.
We now apply Proposition \ref{prop:ancestry_stablization}, which gives that there exists $N\in\N$ such that
for all $n\geq N$, 
$$
\E\l[\inf_{l=cj^\beta,\ldots,2j^\beta}\frac{1}{2l+2}H^j_{l}\r]
\geq 
\frac12\frac{\zeta^2}{4\xi}\frac{\zeta}{\zeta+4^{\zeta}e^{c/C}}.
$$
Note that $\P[P_n]\sim\frac{1-c}{C}$.
Increasing $N$ if necessary, we may also assume that $\P[P_n]\geq\frac12\frac{1-c}{C}$ for $n\geq N$.
Thus, continuing from \eqref{eq:kncj_lower_pre},
\begin{equation}
\P\l[\v{s}_{k(n)}\in\v{c}^\downarrow_j\r]
\geq \frac{1}{4}\frac{\zeta^2}{4\xi}\frac{\zeta}{\zeta+4^{\zeta}e^{c/C}}\frac{1-c}{C}.\label{eq:kncj_lower}
\end{equation}
We now look to control the second term on the right of \eqref{eq:skncjQ}. 
The statement of Theorem \ref{thm:extensive_condensation_graph} relates to a property of the graph $(\mc{G}_n)$,
and consequently in view of Remark \ref{rem:fitness_Gn} we may assume that the fitness values are sampled according to
the uniform distribution on $[0,1]$.
By definition of $k(n)$, $\col(\v{s}_{k(n)})$ has the same distribution as $\max\{U_0,\ldots,U_{Cn^\beta}\}$ where the $(U_i)$ are i.i.d.~uniform random variables on $[0,1]$. It follows that for any $a>0$,
\begin{equation}\label{eq:colvskn}
\P\l[\col\l(\v{s}_{k(n)}\r)< 1-\frac{1}{an^\beta}\r]=\P\l[U_1< 1-\frac{1}{an^\beta}\r]^{Cn^\beta}=\l(1-\frac{1}{an^\beta}\r)^{Cn^\beta}\leq e^{-C/a}.
\end{equation}
The final inequality on the right hand side of the above follows from recalling that $(1-\frac{b}{x})^{x}\leq e^{-b}$ for all $0<b\leq x$.
Choosing $a=\frac{C}{\zeta\log C}$, we have
\begin{align}
\P\l[\Omega\sc Q^{j,n}\r]
&\leq \P\l[\l(\Omega\sc Q^{j,n}\r)\cap\l\{\col\l(\v{s}_{k(n)}\r)\geq 1-\frac{1}{an^\beta}\r\}\r]+\P\l[\col\l(\v{s}_{k(n)}\r)< 1-\frac{1}{an^\beta}\r]\notag\\
&\leq \frac{1}{an^\beta}\E[N^j_{Cn^\beta,j}]+e^{-C/a}\notag\\
&\leq \frac{1}{an^\beta}\E[N^j_{Cj^\beta,j}]+e^{-C/a}\label{eq:N_control}.
\end{align}
In the above, to deduce the second line, for the second term we use \eqref{eq:colvskn};
for the first term we recall $N^j_{Cn^\beta,j}$ 
is the number of source balls (counted with multiplicity) within $\v{c}^\downarrow_j$,
that were born between $Cn^\beta$ and $j$, and that each such source ball samples its fitness independently and uniform on $[0,1]$.
The third line follows trivially from the second.

We are now in a position to apply Proposition \ref{prop:branching_phase_pickup} by which, increasing $N$ again if necessary, for all $j\geq 2^{-1/\beta}N$ we may assume
$\E[N^j_{Cj^\beta,j}]\leq 2\frac{j^\beta}{\beta C^{\beta-1}}.$
Recalling that $2^{-1/\beta}n\leq j\leq n$, we thus obtain that when $n\geq N$
\begin{align}
\P\l[\Omega\sc Q^{j,n}\r]
&\leq 2\frac{1}{a\beta C^{\zeta-1}}\l(\frac{j}{n}\r)^\beta+e^{-C/a}\notag\\
&\leq \frac{2\zeta\log C}{\beta C^{\zeta}}+\frac{1}{C^\zeta}.\label{eq:Q_upper}
\end{align}
Putting \eqref{eq:kncj_lower} and \eqref{eq:Q_upper} into \eqref{eq:skncjQ}, and then putting \eqref{eq:skncjQ} into \eqref{eq:Ln} we obtain that
\begin{equation}\label{eq:Lnn}
\E[S_n]\gtrsim \l(1-2^{-1/\beta}\r)n\l[\frac{1}{4}\frac{\zeta^2}{4\xi}\frac{\zeta}{\zeta+4^{\zeta}e^{c/C}}\frac{1-c}{C}-\frac{2\zeta\log C}{\beta C^{\zeta}}-\frac{1}{C^\zeta}\r].
\end{equation}
Noting that $\zeta>1$ and $\beta\in(0,1)$, we may choose $c=\frac12$ and $C$ sufficiently large that the term in square brackets, in the above equation, is strictly positive.
We thus obtain that $\frac{1}{n}\E[S_n]\gtrsim \gamma$, where $\gamma>0$ is equal to the right hand side of \eqref{eq:Lnn} divided by $n$.

Let $\epsilon>0$ and recall $\ell_n$ from \eqref{eq:ell_def}. Since $\v{s}_{k(n)}$ is the initial half-edge of the fittest of the first $Cn^\beta+1$ vertices, it is clear that $F_{k(n)}=\col(\v{s}_{k(n)})\to 1$ almost surely as $n\to\infty$. Hence we may choose $N\in\N$ such that for all $n\geq N$, $\P[F_{k(n)}\geq 1-\epsilon]\geq 1-\epsilon$, and when this event occurs we have
$\ell_n([1,1-\epsilon])\geq \frac{1}{2(n+1)}S_n$.
Thus $\E[\ell_n([1,1-\epsilon])]\gtrsim (1-\epsilon)\frac{\gamma}{2}$ as $n\to\infty$, which implies that extensive condensation occurs.
\end{proof}

\begin{remark}
\label{rem:ext_vs_non}
In the above proof, it is crucial that, within \eqref{eq:Lnn}, the first term inside the square brackets (which came from Proposition \ref{prop:ancestry_stablization}) has order $C^{-1}$ and the second term (which came from Proposition \ref{prop:branching_phase_pickup}) has the lower order $C^{-\zeta}$. 
Let us briefly attempt to explain why this occurs.
Looking backwards in time, as $k$ decreases, through the genealogy of $\v{c}^\downarrow_n$, there are two key transitions that take place:
\begin{enumerate}
\item The point at which $\mc{H}^n_k$ grows large enough to include source vertices that were unusually fit for their time of birth.
\item The point at which $\mc{H}^n_k$ grows large enough to include individual source vertices with non-vanishing probability.
\end{enumerate}
A priori, these two transitions might not occur simultaneously, although it is clear that 1 must come non-strictly before 2.
The fact that these two transitions do occur simultaneously is what leads to extensive condensation in PAC -- Propositions \ref{prop:ancestry_stablization} and \ref{prop:branching_phase_pickup} (the latter via \eqref{eq:N_control}) tell us that both happen during the critical window.
If these two transitions did not occur simultaneously, one should expect non-extensive condensation.

We believe that, as a rule of thumb, the standard stochastic approximation theorems (see e.g.~Section 2.4 of \cite{Pemantle2007}) are not suitable to prove extensive condensation, in the absence of a persistent hub or other simplifying factor. The reason is that stochastic approximation determines only if convergence (of some well chosen quantity to a suitable limit) occurs -- it does not identify the rate of convergence. Thus stochastic approximation alone will not identify the asymptotic times at which the two key transitions above take place, and thus does not distinguish between extensive and non-extensive condensation. 
\end{remark}

\begin{remark}
The reader may ask why we focus on $\liminf_n \E[\frac{1}{n}S_n]$ rather than $\E[\liminf_n \frac{1}{n}S_n]$. 
Firstly, as discussed in Section \ref{sec:background}, it is $\liminf_n \E[\cdot]$ which relates to the existing meaning of extensive condensation within the literature.
Secondly, for PAC, we expect that $\E[\liminf_n \frac{1}{n}S_n]=0$. We expect this because the critical window $[cn^\beta,Cn^\beta]$ will, infinitely often as $n\to\infty$, contain a large number $M$ of vertices that each became a newly fittest vertex at their time of birth. Let us call this event $E_n$. When $E_n$ happens and $M$ is large we believe that $S_n$ is close to zero, essentially because 
the many unusually fit vertices born during the critical window will then compete with each other as they grow, resulting in a situation where the largest vertex at time $n$ has degree $\epsilon_M n$, where $\epsilon_M$ is close to zero when $M$ is large.
Note that $\E[S_n]$ can remain bounded away from zero only because $\P[E_n]\to 0$. We do not attempt a rigorous statement or proof of these claims within the present article.
\end{remark}

\subsection{The branching phase of the genealogy}
\label{sec:ancestry_branching}

We now turn our attention to proving Propositions \ref{prop:ancestry_stablization} and \ref{prop:branching_phase_pickup}.
The latter will appear at the end of Section \ref{sec:ancestry_branching}, and the former in Section \ref{sec:ancestry_stablization}. 
In both cases it is key to investigate the growth of the genealogy $\v{c}^\downarrow_n$, backwards in time.
In this section we fix a cue ball $\v{c}^\downarrow_n$, and look backwards in time at the period during which its genealogy is dominated by branching.
We analyse this phase of the genealogy using iterative methods, with each iteration moving one step further backwards in time.
The following lemma will play a key role in the calculations.
\begin{lemma}
\label{lem:key_asym}
Let $\alpha\geq 0$ and $\gamma_j\in\R$ such that $\sum_j|\gamma_j|<\infty$. Then as $k,n\to\infty$ with $k\leq n$,
$$\prod_{j=k}^n(1+\frac{\alpha}{j}+\gamma_j)\sim\l(\frac{n}{k}\r)^\alpha.$$
\end{lemma}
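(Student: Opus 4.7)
The plan is to take logarithms and carefully expand. Specifically, I would establish that
$$
\log\prod_{j=k}^n\l(1+\frac{\alpha}{j}+\gamma_j\r) - \alpha\log(n/k) \to 0 \quad \text{as } k,n\to\infty \text{ with } k\leq n,
$$
after which exponentiating gives the claim. Since $\sum_j|\gamma_j|<\infty$ forces $\gamma_j\to 0$, and $\alpha/j\to 0$, for all sufficiently large $k$ we have $|\alpha/j+\gamma_j|\leq 1/2$ uniformly in $j\geq k$; this both keeps every factor of the product positive and licenses the Taylor expansion $\log(1+x)=x+O(x^2)$ on this range.

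Applying the expansion termwise gives
$$
\sum_{j=k}^n\log\l(1+\frac{\alpha}{j}+\gamma_j\r) = \alpha\sum_{j=k}^n\frac{1}{j} + \sum_{j=k}^n\gamma_j + \sum_{j=k}^n O\l(\l(\tfrac{\alpha}{j}+\gamma_j\r)^2\r).
$$
For the first sum I would use the standard harmonic asymptotic $\sum_{j=k}^n 1/j=\log(n/k)+O(1/k)$, which follows from $H_m=\log m+\gamma_{\text{EM}}+O(1/m)$. The second sum is dominated in absolute value by the tail $\sum_{j\geq k}|\gamma_j|$, which vanishes as $k\to\infty$ by absolute convergence. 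For the quadratic error I would split $(\alpha/j+\gamma_j)^2\leq 2\alpha^2/j^2+2\gamma_j^2$: the first piece contributes $O(1/k)$ since $\sum 1/j^2$ converges, while the second is bounded by $(\sup_{j\geq k}|\gamma_j|)\sum_j|\gamma_j|$, which likewise tends to zero.

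Combining these estimates yields $\log\prod_{j=k}^n(1+\alpha/j+\gamma_j)=\alpha\log(n/k)+o(1)$ as $k,n\to\infty$, and exponentiating gives the required asymptotic equivalence. I do not foresee any substantial obstacle. The only mildly delicate point is that the statement must hold uniformly over all $k\leq n$, including regimes where $n/k$ stays bounded or where $n=k$; but the $o(1)$ error above is controlled in terms of $k\to\infty$ alone, so it is absorbed whether $(n/k)^\alpha$ is large, bounded, or equal to $1$.
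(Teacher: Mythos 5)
Your proof is correct and follows essentially the same route as the paper's: take logarithms, expand $\log(1+x)$ to second order, compare $\sum_{j=k}^n \frac1j$ with $\log(n/k)$, and absorb the $\gamma_j$ contribution using $\sum_j|\gamma_j|<\infty$. The paper merely packages this as an explicit two-sided inequality (via $\log(1+x)\leq x$ and $x-\tfrac{x^2}{2}\leq\log(1+x)$ with integral comparisons) for the case $\gamma_j=0$ and then notes the $\gamma_j$ perturbation contributes a factor $\exp\l(\pm\sum_{j\geq k}|\gamma_j|\r)\to 1$, which is the same estimate you carry out inside a single expansion.
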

A proof of this lemma is given in Appendix \ref{app:key_asym}.
We will also make regular use of the following elementary inequality: for $j\in\N$ and $x\in[0,1]$,
\begin{align}
1-jx\leq (1-x)^j \leq 1-jx+\binom{j}{2}x^2 \label{eq:taylor}.
\end{align}

We now define the notation that we will use to explore $\v{c}^\downarrow_n$ backwards in time.
Recall that the potential parents $\mc{P}_j=\{\v{p}_{j,1},\ldots,\v{p}_{j,R_j}\}$ of $\v{c}_j$ are i.i.d.~samples from $\mc{U}_{j-1}$.
For $k=0,1,\ldots,n$ we define
\begin{align}
G^n_k&=\sum\limits_{j=k}^n \1_{\l(\v{c}_j\in\v{c}_n^{\downarrow}\r)} \sum\limits_{l=1}^{R_j}\1_{\l(\v{p}_{j,l}\in\mc{U}_{k-1}\r)}\label{eq:Gk_def}
\end{align}
In words, $G_k^n$ counts, with multiplicity, potential parents $\v{p}$ of $\{\v{c}_{k},\ldots,\v{c}_n\}\cap \v{c}_n^\downarrow$ that were born strictly before time $k$.
Note that, as usual, $\cdot^n$ denotes a superscript $n$ and not an exponent.

Our first goal in this section is to find upper and lower bounds for $\E[G^n_k]$.
In order to establish the lower bound we will also require an upper bound on $\E[(G^n_k)^2]$.
We end with two applications of these bounds: in Lemma \ref{lem:branching_end_control} we show that when $k\approx n^\beta$ we have $\E[G^n_k]\approx k$ and, with this choice of $k$ we give the proof of \ref{prop:branching_phase_pickup}.
Let
\begin{align}
A_k^n&=\sum\limits_{j=k+1}^n \1_{\l(\v{c}_j\in\v{c}_n^{\downarrow}\r)} \sum\limits_{l=1}^{R_j}\1_{\l(\v{p}_{j,l}\in\{\v{c}_{k},\v{s}_{k}\}\r)}\label{eq:Gk_loss}
\end{align}
In words, $A^n_k$ is the number of times (counted with multiplicity) that either $\v{c}_{k}$ or $\v{s}_{k}$ is chosen as a potential parent of some $\v{c}\in\{\v{c}_{k+1},\ldots,\v{c}_n\}\cap \v{c}_n^\downarrow$. Similarly, let
\begin{align}
B^n_k&=\1_{\l(\v{c}_{k}\in\v{c}_n^{\downarrow}\r)} R_{k}\label{eq:Gk_gain}\\
&=\1_{\l(\v{c}_{k}\in\v{c}_n^{\downarrow}\r)}\sum_{l=1}^{R_{k}}\1_{\l(\v{p}_{k,l}\in\mc{U}_{k-1}\r)}\label{eq:Gk_gain_2}
\end{align}
In words, $B^n_k$ is the number of potential parents (counted with multiplicity) of $\v{c}_{k}$ when $\v{c}_{k}$ is itself in $\v{c}_n^\downarrow$, and is zero otherwise. Note that all such potential parents are automatically elements of $\mc{U}_{k-1}$, justifying \eqref{eq:Gk_gain_2}.
It is immediate from \eqref{eq:Gk_def}, \eqref{eq:Gk_loss} and \eqref{eq:Gk_gain_2} that
\begin{equation}\label{eq:Gm_recursion_pre}
G^n_{k}=G^n_{k+1}-A^n_k+B^n_k.
\end{equation}

 For $k=0,1,2,\ldots$, we define the sequence of decreasing $\sigma$-fields
$$\mathscr{G}_k=\sigma\l(R_j, \1_{\l(\v{c}_j=\v{p}_{i,l}\r)}, \1_{\l(\v{s}_j=\v{p}_{i,l}\r)}\-i\geq j\geq k,\; l\in\N\r)$$
In words, $\mathscr{G}_k$ contains the information of:
the number $R_j$ of potential parents of each of the balls $\{\v{c}_{k},\v{c}_{k+1},\ldots\}\cup\{\v{s}_{k},\v{s}_{k+1},\ldots\}$,
plus
the identities of these potential parents in cases where they are also elements of $\{\v{c}_{k},\v{c}_{k+1},\ldots\}\cup\{\v{s}_{k},\v{s}_{k+1},\ldots\}$.

We will take conditional expectation of \eqref{eq:Gm_recursion_pre} with respect to $\mathscr{G}_{k+1}$ in Lemma \ref{lem:Gk_1st}, and the same for $(G^n_k)^2$ in Lemma \ref{lem:Gk2_upper}.
To this end, we note that:
\begin{enumerate}
\item[($\star$)]
If $k+1\leq j\leq n$, and $1\leq l\leq R_j$, then
$\1_{\l(\v{c}_j\in\v{c}^\downarrow_n\r)}$
and
$\1_{\l(\v{p}_{j,l}\in\mc{U}_{k}\r)}$
are both $\mathscr{G}_{k+1}$ measurable.
\end{enumerate}
The first claim holds because if there is a potential ancestral line connecting $\v{c}_n$ to $\v{c}_j$ then $\mathscr{G}_{k+1}$ can see the identities of these ancestors. The second holds because $\v{p}_{j,l}\in\mc{U}_{k}$ if and only if $\v{p}_{j,l}$ was not born after time $k+1$.

We record one further observation for future use:
\begin{itemize}
\item[($\dagger$)] Consider $k+1\leq j\leq n$, and $1\leq l\leq R_j$. On the event that $\v{p}_{j,l}\in\mc{U}_{k}$, we have that $\v{p}_{j,l}$ is uniformly distributed on $\mc{U}_{k}$, with distribution independent of $\mathscr{G}_{k+1}$.
\end{itemize}
This observation is an immediate consequence of the fact that each potential parent $\v{p}_{j,l}$ of $\v{c}_j$ is sampled, independently of all else, uniformly from the set $\mc{U}_{j-1}$.

\begin{lemma}\label{lem:Gk_1st}
For $k<n$, we have
\begin{enumerate}
\item $\textstyle\E[A^n_k\|\mathscr{G}_{k+1}]=\frac{G^n_{k+1}}{k+1},$ 
\item $\textstyle\E\l[G^n_k\|\mathscr{G}_{k+1}\r]=G^n_{k+1}-\frac{G^n_{k+1}}{k+1}+2\zeta\l(1-\l(1-\frac{1}{2(k+1)}\r)^{G^n_{k+1}}\r).$
\end{enumerate}
\end{lemma}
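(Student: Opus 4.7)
The plan is to use the recursion $G^n_k = G^n_{k+1} - A^n_k + B^n_k$ given in \eqref{eq:Gm_recursion_pre}, so that part 2 of the lemma reduces to computing $\E[A^n_k\mid\mathscr{G}_{k+1}]$ and $\E[B^n_k\mid\mathscr{G}_{k+1}]$ separately and adding. The two observations $(\star)$ and $(\dagger)$ do all of the probabilistic work; the substance of the proof is just combining them carefully.

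For part 1, I would start from the definition \eqref{eq:Gk_loss} and condition term-by-term. For each $j\in\{k+1,\ldots,n\}$ and $l\in\{1,\ldots,R_j\}$ the factor $\1_{(\v{c}_j\in\v{c}_n^\downarrow)}$ is $\mathscr{G}_{k+1}$-measurable by $(\star)$, and $R_j$ is $\mathscr{G}_{k+1}$-measurable by definition. The only randomness left under the conditional expectation is the identity of $\v{p}_{j,l}$, about which $(\star)$ tells us whether it lies in $\mc{U}_k$, and $(\dagger)$ tells us that on the event $\{\v{p}_{j,l}\in\mc{U}_k\}$ it is uniform on the $2(k+1)$ balls of $\mc{U}_k$, independent of $\mathscr{G}_{k+1}$. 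Since $\{\v{c}_k,\v{s}_k\}\subset\mc{U}_k$, this gives
\[
\E\!\left[\1_{(\v{p}_{j,l}\in\{\v{c}_k,\v{s}_k\})}\,\big|\,\mathscr{G}_{k+1}\right]=\frac{2}{2(k+1)}\,\1_{(\v{p}_{j,l}\in\mc{U}_k)}=\frac{1}{k+1}\,\1_{(\v{p}_{j,l}\in\mc{U}_k)}.
\]
Summing over $j,l$ reproduces exactly the defining sum of $G^n_{k+1}$ in \eqref{eq:Gk_def} (shifted by one), giving $\E[A^n_k\mid\mathscr{G}_{k+1}]=G^n_{k+1}/(k+1)$.

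For part 2, given part 1 it remains only to show $\E[B^n_k\mid\mathscr{G}_{k+1}]=2\zeta\bigl(1-(1-\tfrac{1}{2(k+1)})^{G^n_{k+1}}\bigr)$. Writing $B^n_k=\1_{(\v{c}_k\in\v{c}_n^\downarrow)}R_k$, I would split the conditional expectation by arguing that $R_k$ is independent of both $\mathscr{G}_{k+1}$ and of the event $\{\v{c}_k\in\v{c}_n^\downarrow\}$: indeed, $\mathscr{G}_{k+1}$ involves only $R_j$ and parent-identity indicators for $j\geq k+1$, none of which depend on $R_k$, and the event $\{\v{c}_k\in\v{c}_n^\downarrow\}$ is determined by whether any of the potential parents $\v{p}_{j,l}$ (for $j>k$, $\v{c}_j\in\v{c}_n^\downarrow$) equals $\v{c}_k$, sampling steps that are independent of $R_k$. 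Hence $\E[B^n_k\mid\mathscr{G}_{k+1}]=\E[R]\cdot\P(\v{c}_k\in\v{c}_n^\downarrow\mid\mathscr{G}_{k+1})$, and $\E[R]=2\zeta$ by \eqref{eq:zeta_beta_def}.

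The remaining task is to evaluate $\P(\v{c}_k\in\v{c}_n^\downarrow\mid\mathscr{G}_{k+1})$. The key observation is that $\v{c}_k\in\v{c}_n^\downarrow$ occurs iff at least one of the potential parents enumerated by the defining sum of $G^n_{k+1}$ (i.e.\ those $\v{p}_{j,l}$ with $j\geq k+1$, $\v{c}_j\in\v{c}_n^\downarrow$, and $\v{p}_{j,l}\in\mc{U}_k$) happens to equal $\v{c}_k$. By $(\dagger)$ these $G^n_{k+1}$ samples are, conditionally on $\mathscr{G}_{k+1}$, i.i.d.\ uniform on $\mc{U}_k$ (with $|\mc{U}_k|=2(k+1)$); their total count is $\mathscr{G}_{k+1}$-measurable. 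A direct complementary-event calculation then gives $\P(\v{c}_k\in\v{c}_n^\downarrow\mid\mathscr{G}_{k+1})=1-(1-\tfrac{1}{2(k+1)})^{G^n_{k+1}}$, completing the proof. The only subtle point, and the one I would take care to justify explicitly, is the conditional-independence assertion that lets us multiply $\E[R]$ by $\P(\v{c}_k\in\v{c}_n^\downarrow\mid\mathscr{G}_{k+1})$; everything else is bookkeeping on top of $(\star)$ and $(\dagger)$.
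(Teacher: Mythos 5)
Your proposal is correct and follows essentially the same route as the paper: part 1 via $(\star)$ and $(\dagger)$ exactly as in the paper, and part 2 via the recursion \eqref{eq:Gm_recursion_pre} together with $\E[B^n_k\,|\,\mathscr{G}_{k+1}]=2\zeta\,\P(\v{c}_k\in\v{c}_n^\downarrow\,|\,\mathscr{G}_{k+1})$, computed by the complementary-event calculation $1-\bigl(1-\tfrac{1}{2(k+1)}\bigr)^{G^n_{k+1}}$, which is exactly what the paper's product-of-indicators computation in \eqref{eq:Gk_gain_inter_pre}--\eqref{eq:Gk_gain_inter} amounts to. The conditional-independence point you flag (independence of $R_k$ from $\mathscr{G}_{k+1}$ and from the event $\{\v{c}_k\in\v{c}_n^\downarrow\}$, and the conditional i.i.d.\ uniformity of the $G^n_{k+1}$ samples) is the same fact the paper uses implicitly when it factors the conditional expectation through the product, so no gap remains.
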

\begin{proof}
Recall from \eqref{eq:zeta_beta_def} that $\zeta=\frac{\E[R]}{2}$.
We address the two claims in turn.
In short, the first claim holds because, by ($\dagger$), each ball within the subset of $\mc{U}_{k}$ counted by $G^n_{k+1}$ has chance $\frac{2}{|\mc{U}_k|}=\frac{1}{k+1}$ of being an element of $\{\v{c}_k,\v{s}_k\}$.
Formally, from \eqref{eq:Gk_loss}, since $\v{c}_{k},\v{s}_{k}\in\mc{U}_{k}$ we have that
$$A^n_k=\sum\limits_{j=k+1}^n \1_{\l(\v{c}_j\in\v{c}_n^\downarrow\r)}
\sum\limits_{l=1}^{R_j}\1_{\l(\v{p}_{j,l}\in\mc{U}_{k}\r)}\1_{\l(\v{p}_{j,l}\in\{\v{c}_{k},\v{s}_{k}\}\r)}.$$
Taking conditional expectation with respect to $\mathscr{G}_{k+1}$,
\begin{align}
\E[A_k^n\|\mathscr{G}_{k+1}]
&=\sum\limits_{j=k+1}^n \1_{\l(\v{c}_j\in\v{c}_n^\downarrow\r)}\sum\limits_{l=1}^{R_j}\1_{\l(\v{p}_{j,l}\in\mc{U}_{k}\r)}\E\l[\1_{\l(\v{p}_{j,l}\in\{\v{c}_{k},\v{s}_{k}\}\r)}\,\Big|\,\mathscr{G}_{k+1}\r]\notag\\
&=\frac{1}{k+1}\sum\limits_{j=k+1}^n \1_{\l(\v{c}_j\in\v{c}_n^\downarrow\r)}\sum\limits_{l=1}^{R_j}\1_{\l(\v{p}_{j,l}\in\mc{U}_{k}\r)}\label{eq:Cj_E}
\end{align}
Here, to deduce the first line we use $(\star)$, and to deduce the second line we use $(\dagger)$.
The first claim now follows from \eqref{eq:Gk_def}.

We now address the second claim.
We will take conditional expectation of \eqref{eq:Gm_recursion_pre} with respect to $\mathscr{G}_{k+1}$. By $(\star)$, $G^n_{k+1}$ is $\mathscr{G}_{k+1}$ measurable. We have already calculated $\E[A^n_k\|\mathscr{G}_{k+1}]$ above and it remains to calculate $\E[B^n_k\|\mathscr{G}_{k+1}]$.
From \eqref{eq:Gk_gain} we have
\begin{align}
B^n_k
&=R_{k}\l(1-\1_{\l(\v{c}_{k}\notin\v{c}_n^\downarrow\r)}\r)\notag\\
&=R_{k}\l(1-\prod\limits_{j=k+1}^n\prod\limits_{l=1}^{R_j}\l[\1_{\l(\v{c}_j\in\v{c}^\downarrow_n\r)}\1_{\l(\v{p}_{j,l}\neq\v{c}_{k}\r)}+\1_{\l(\v{c}_j\notin\v{c}^\downarrow_n\r)}\r]\r)\notag\\
&=R_{k}\l(1-\prod\limits_{j=k+1}^n\prod\limits_{l=1}^{R_j}\l[\1_{\l(\v{c}_j\in\v{c}^\downarrow_n\r)}\l[\1_{\l(\v{p}_{j,l}\in\mc{U}_{k}\r)}\1_{\l(\v{p}_{j,l}\neq\v{c}_{k}\r)}+\1_{\l(\v{p}_{j,l}\notin\mc{U}_{k}\r)}\r]+\1_{\l(\v{c}_j\notin\v{c}^\downarrow_n\r)}\r]\r).\label{eq:Gk_gain_inter_pre}
\end{align}
Therefore,
\begin{align}
\E&[B^n_k\|\mathscr{G}_{k+1}]\notag\\
&=2\zeta\l(1-\prod\limits_{j=k+1}^n\prod\limits_{l=1}^{R_j}\l[\1_{\l(\v{c}_j\in\v{c}^\downarrow_n\r)}\l[\1_{\l(\v{p}_{j,l}\in\mc{U}_{k}\r)}\E\l[\1_{\l(\v{p}_{j,l}\neq\v{c}_{k}\r)}\,\Big|\,\mathscr{G}_{k+1}\r]+\1_{\l(\v{p}_{j,l}\notin\mc{U}_{k}\r)}\r]+\1_{\l(\v{c}_j\notin\v{c}^\downarrow_n\r)}\r]\r)\notag\\
&=2\zeta\l(1-\prod\limits_{j=k+1}^n\prod\limits_{l=1}^{R_j}\l[\1_{\l(\v{c}_j\in\v{c}^\downarrow_n\r)}\l[\1_{\l(\v{p}_{j,l}\in\mc{U}_{k}\r)}\l(1-\frac{1}{|\mc{U}_{k}|}\r)+\1_{\l(\v{p}_{j,l}\notin\mc{U}_{k}\r)}\r]+\1_{\l(\v{c}_j\notin\v{c}^\downarrow_n\r)}\r]\r)\notag\\
&=2\zeta\l(1-\l(1-\frac{1}{2(k+1)}\r)^{G^n_{k+1}}\r). \label{eq:Gk_gain_inter}
\end{align}
Here, in the first line we use $(\star)$ and the fact that $R_{k}$ is independent of $\mathscr{G}_{k+1}$, with mean $\E[R]=2\zeta$.
We use $(\dagger)$ to deduce the second line, and the final line then follows from \eqref{eq:Gk_def} and $|\mc{U}_{k}|=2k+2$.
The stated result follows.
\end{proof}

\begin{lemma}\label{lem:Gk_1st_upper}
As $k,n\to\infty$ with $k\leq n$ we have
$\E[G^n_k]\lesssim 2\zeta \l(\frac{n}{k}\r)^{\zeta-1}$.
\end{lemma}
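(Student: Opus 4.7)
The plan is to start from the recursion in Lemma \ref{lem:Gk_1st}(2) and linearize it via the elementary inequality \eqref{eq:taylor}. Applying the lower bound $1 - jx \leq (1-x)^j$ with $x = \frac{1}{2(k+1)}$ and $j = G^n_{k+1}$ yields
$$2\zeta\left(1 - \left(1-\tfrac{1}{2(k+1)}\right)^{G^n_{k+1}}\right) \leq 2\zeta\cdot\frac{G^n_{k+1}}{2(k+1)} = \frac{\zeta\, G^n_{k+1}}{k+1}.$$
Substituting this into Lemma \ref{lem:Gk_1st}(2) and combining with the $-\frac{G^n_{k+1}}{k+1}$ term gives
$$\E[G^n_k \| \mathscr{G}_{k+1}] \leq G^n_{k+1}\left(1 + \frac{\zeta-1}{k+1}\right).$$

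Taking unconditional expectations and iterating this one-step inequality backwards from $n$ down to $k$, I obtain
$$\E[G^n_k] \leq \E[G^n_n]\prod_{j=k+1}^{n}\left(1 + \frac{\zeta-1}{j}\right).$$
The initial value is easy: from \eqref{eq:Gk_def}, $G^n_n = \sum_{l=1}^{R_n}\1_{(\v{p}_{n,l}\in\mc{U}_{n-1})} = R_n$, since potential parents of $\v{c}_n$ are by definition sampled from $\mc{U}_{n-1}$. Hence $\E[G^n_n] = 2\zeta$.

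The final step is to apply Lemma \ref{lem:key_asym} with $\alpha = \zeta - 1$ and $\gamma_j = 0$, which gives $\prod_{j=k+1}^n(1+\tfrac{\zeta-1}{j}) \sim (n/k)^{\zeta-1}$ (the discrepancy between starting the product at $k$ versus $k+1$ is a factor $1 + \tfrac{\zeta-1}{k} \to 1$). Combining yields $\E[G^n_k] \lesssim 2\zeta(n/k)^{\zeta-1}$, as required.

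There is not really a serious obstacle here: the whole proof is driven by Lemma \ref{lem:Gk_1st}(2) together with the two auxiliary tools \eqref{eq:taylor} and Lemma \ref{lem:key_asym}. The only mildly delicate point is ensuring that the linearization via the first-order Taylor bound loses nothing at leading order; this is exactly what makes the answer $(n/k)^{\zeta-1}$ with the sharp prefactor $2\zeta$, since the effective growth rate per step is $1 + \frac{1}{k+1}(-1 + \zeta)$, and the $-1$ (from coalescence with $\{\v{c}_k,\v{s}_k\}$) combines with the $+\zeta$ (from new potential parents of $\v{c}_k$) to produce precisely $\zeta - 1$. The harder companion bound will be the matching lower bound on $\E[G^n_k]$, which requires controlling $\E[(G^n_k)^2]$ to invert the direction of \eqref{eq:taylor}; but that is the content of the subsequent Lemma \ref{lem:Gk2_upper}, not of the present statement.
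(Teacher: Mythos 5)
Your proposal is correct and follows essentially the same route as the paper: apply the left-hand bound of \eqref{eq:taylor} to linearize the recursion in Lemma \ref{lem:Gk_1st}(2), obtain $\E[G^n_k]\leq\E[G^n_{k+1}]\l(1+\frac{\zeta-1}{k+1}\r)$, iterate back to $\E[G^n_n]=\E[R_n]=2\zeta$, and finish with Lemma \ref{lem:key_asym}. Your extra remarks (the value of $G^n_n$, the $k$ versus $k+1$ endpoint of the product) are accurate fillings-in of details the paper leaves implicit.
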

\begin{proof}
From Lemma \ref{lem:Gk_1st} and the left hand side of \eqref{eq:taylor},
\begin{align*}
\E[G^n_{k}]
&\leq \E[G^n_{k+1}]-\frac{\E[G^n_{k+1}]}{k+1}+2\zeta\frac{\E[G^n_{k+1}]}{2k+2}
=\E[G^n_{k+1}]\l(1+\frac{\zeta-1}{k+1}\r).
\end{align*}
By iterating the above inequality we obtain that
$
\E[G^n_k]
\leq\E[G^n_n] \prod_{j=k+1}^n\l(1+\frac{\zeta-1}{j}\r).
$
The result follows by applying Lemma \ref{lem:key_asym} and noting that $G^n_n=R_n$, with expectation $2\zeta$.
\end{proof}

\begin{lemma}\label{lem:Gk2_upper}
As $k,n\to\infty$ with $k\leq n$ we have
$\E[(G^n_k)^2]\lesssim \xi\l(\frac{n}{k}\r)^{2(\zeta-1)}$, where $\xi\in(\zeta^2,\infty)$ is a constant that depends only on the distribution of $R$.
\end{lemma}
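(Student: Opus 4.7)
The plan is to emulate the proof of Lemma \ref{lem:Gk_1st_upper}, but derive a quadratic analogue of the recursion used there. Squaring the recursion \eqref{eq:Gm_recursion_pre} gives
\[
(G^n_k)^2 = (G^n_{k+1})^2 + 2 G^n_{k+1}\left(B^n_k - A^n_k\right) + \left(B^n_k - A^n_k\right)^2,
\]
and I will take conditional expectation with respect to $\mathscr{G}_{k+1}$ and bound each piece. For the cross term, Lemma \ref{lem:Gk_1st} combined with the left-hand inequality in \eqref{eq:taylor} gives $\E[B^n_k - A^n_k \mid \mathscr{G}_{k+1}] \leq (\zeta-1) G^n_{k+1}/(k+1)$, so after absorbing into $(G^n_{k+1})^2$ one obtains a multiplicative factor $1 + 2(\zeta-1)/(k+1)$, which is precisely what is needed to generate the exponent $2(\zeta-1)$ via Lemma \ref{lem:key_asym}.

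For the remainder, use $(B^n_k - A^n_k)^2 \leq (A^n_k)^2 + (B^n_k)^2$, which holds because $A^n_k, B^n_k \geq 0$. The crucial observation, built from $(\star)$ and $(\dagger)$, is that conditional on $\mathscr{G}_{k+1}$, the $G^n_{k+1}$ ``slots'' contributing to $A^n_k$---namely pairs $(j,l)$ with $\v{c}_j \in \v{c}^\downarrow_n$ and $\v{p}_{j,l} \in \mc{U}_k$---have their potential parents independently uniform on $\mc{U}_k$, so $A^n_k$ is conditionally Binomial$(G^n_{k+1}, 1/(k+1))$. This yields $\E[(A^n_k)^2 \mid \mathscr{G}_{k+1}] \leq G^n_{k+1}/(k+1) + (G^n_{k+1})^2/(k+1)^2$. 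Analogously, $(B^n_k)^2 = R_k^2 \1_{(\v{c}_k \in \v{c}^\downarrow_n)}$ with $R_k$ independent of $\mathscr{G}_{k+1}$, and by mimicking the second claim of Lemma \ref{lem:Gk_1st} one gets $\E[(B^n_k)^2 \mid \mathscr{G}_{k+1}] \leq \tfrac{1}{2}\E[R^2] G^n_{k+1}/(k+1)$.

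Combining these bounds and taking unconditional expectations, the sequence $a_k := \E[(G^n_k)^2]$ satisfies a recursion of the form
\[
a_k \leq a_{k+1}\left(1 + \frac{2(\zeta-1)}{k+1} + \frac{1}{(k+1)^2}\right) + \frac{K \, n^{\zeta-1}}{(k+1)^\zeta}
\]
for large $k$, where $K$ depends only on the law of $R$; the inhomogeneous term uses Lemma \ref{lem:Gk_1st_upper} to substitute for $\E[G^n_{k+1}]$. With boundary datum $a_n = \E[R^2]$, iterating from $n$ down to $k$ and applying Lemma \ref{lem:key_asym} gives the homogeneous contribution $\E[R^2] (n/k)^{2(\zeta-1)}$, while a straightforward summation $\sum_{j=k+1}^n j^{\zeta-2} \sim n^{\zeta-1}/(\zeta-1)$ (valid since $\zeta > 1$) turns the forcing term into $\tfrac{K}{\zeta-1}(n/k)^{2(\zeta-1)}$. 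Setting $\xi = \E[R^2] + K/(\zeta-1)$ yields the required bound; that $\xi > \zeta^2$ is automatic because $\E[R^2] \geq \E[R]^2 = 4\zeta^2 > \zeta^2$.

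The main obstacle I anticipate is not the iteration itself but verifying the binomial structure of $A^n_k$ conditional on $\mathscr{G}_{k+1}$: one must use $(\dagger)$ simultaneously at each of the $G^n_{k+1}$ slots while keeping inter-slot independence intact---this works precisely because the original sampling in Definition \ref{def:urn_model} is i.i.d.\ across $(j,l)$, and the conditioning event $\{\v{p}_{j,l} \in \mc{U}_k\}$ is $\mathscr{G}_{k+1}$-measurable by $(\star)$. The only quantitative subtlety is checking that the $1/(k+1)^2$ correction inside the multiplicative factor is summable, so that Lemma \ref{lem:key_asym} absorbs it into the constant rather than inflating the exponent.
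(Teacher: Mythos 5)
Your proposal is correct and follows essentially the same route as the paper's proof: square the exact recursion \eqref{eq:Gm_recursion_pre}, bound the cross term via Lemma \ref{lem:Gk_1st} and \eqref{eq:taylor} to extract the factor $1+\tfrac{2(\zeta-1)}{k+1}+\mc{O}((k+1)^{-2})$, bound the quadratic remainder by $\E[(A^n_k)^2\|\mathscr{G}_{k+1}]\leq \tfrac{G^n_{k+1}}{k+1}+\big(\tfrac{G^n_{k+1}}{k+1}\big)^2$ and $\E[(B^n_k)^2\|\mathscr{G}_{k+1}]\leq\tfrac12\E[R^2]\tfrac{G^n_{k+1}}{k+1}$, then iterate and apply Lemmas \ref{lem:Gk_1st_upper} and \ref{lem:key_asym} with the summation $\sum_{j=k+1}^n j^{\zeta-2}\leq\tfrac{1}{\zeta-1}(n+1)^{\zeta-1}$, arriving at the same form of $\xi$. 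Your observation that $A^n_k$ is, conditionally on $\mathscr{G}_{k+1}$, Binomial$(G^n_{k+1},\tfrac{1}{k+1})$ (justified by $(\star)$, $(\dagger)$ and the i.i.d.\ sampling across slots) is a clean equivalent of the paper's derivation of \eqref{eq:Ak2_E}, and your direct squaring replaces the paper's indicator case analysis without changing the resulting bound.
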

\begin{proof}
We will first show that for all $k\leq n$,
\begin{equation}
\label{eq:Ak2_E}
\E\l[(A^n_k)^2\|\mathscr{G}_{k+1}\r]\leq \frac{G^n_{k+1}}{k+1}+\l(\frac{G^n_{k+1}}{k+1}\r)^2.
\end{equation}
To this end, let
$C^k_j=\sum_{l=1}^{R_j}\1_{\l(\v{p}_{j,l}\in\{\v{c}_k,\v{s}_k\}\r)}$.
In similar style to \eqref{eq:Cj_E} note that when $k<j$,
\begin{align}
\E[C^k_j\|\mathscr{G}_{k+1}]
&=\sum\limits_{l=1}^{R_j}\1_{\l(\v{p}_{j,l}\in\mc{U}_k\r)}\E\l[\1_{\l(\v{p}_{j,l}\in\{\v{c}_k,\v{s}_k\}\r)}\;\Big|\;\mathscr{G}_{k+1}\r]\notag\\
&=\frac{1}{k+1} \sum\limits_{l=1}^{R_j}\1_{\l(\v{p}_{j,l}\in\mc{U}_k\r)}.\label{eq:Ckj_E}
\end{align}
Here, the first equality follows by $(\star)$ and the second by $(\dagger)$.

We now aim to deduce \eqref{eq:Ak2_E}.
From \eqref{eq:Gk_loss} we have
\begin{align*}
(A^n_k)^2=A^n_k+2\sum\limits_{j=k+1}^n\sum\limits_{j'=k+1}^{j-1}\1_{\l(\v{c}_j\in\v{c}_n^\downarrow\r)}\1_{\l(\v{c}_{j'}\in\v{c}_n^\downarrow\r)}C^k_{j}C^k_{j'}.
\end{align*}
For $k+1\leq j'<j\leq n$ we have in particular that $j'\neq j$, hence $\v{p}_{j,l}$ and $\v{p}_{j',l'}$ are independent of each other.
Hence also $C^n_j$ and $C^n_{j'}$ are also independent, and in particular
$
\E\l[C^k_jC^k_{j'}\|\mathscr{G}_{k+1}\r]
=\E\l[C^k_j\|\mathscr{G}_{k+1}\r]\E\l[C^k_{j'}\|\mathscr{G}_{k+1}\r].
$
Therefore,
\begin{align*}
\E[(A^n_k)^2\|\mathscr{G}_{k+1}]
&=\E[A^n_k\|\mathscr{G}_{k+1}]+2\sum\limits_{j=k+1}^n\sum\limits_{j'=k+1}^{j-1}\1_{\l(\v{c}_j\in\v{c}_n^\downarrow\r)}\1_{\l(\v{c}_{j'}\in\v{c}_n^\downarrow\r)}\E\l[C^k_{j}\|\mathscr{G}_j\r]\E\l[C^k_{j'}\|\mathscr{G}_{j'}\r]\\
&\leq \frac{G^n_{k+1}}{k+1}+\frac{1}{(k+1)^2}\l(\sum\limits_{j=k+1}^n\1_{\l(\v{c}_j\in\v{c}_n^\downarrow\r)}\sum\limits_{l=1}^{R_j}\1_{\l(\v{p}_{j,l}\in\mc{U}_{k}\r)}\r)^2\\
&=\frac{G^n_{k+1}}{k+1}+\frac{(G^n_{k+1})^2}{(k+1)^2}.
\end{align*}
Here, the second line follows by Lemma \ref{lem:Gk_1st} and \eqref{eq:Ckj_E}.
The final line then follows from \eqref{eq:Gk_def}. Thus we have established \eqref{eq:Ak2_E}.

We now approach $(G^n_k)^2$.
To keep our notation manageable, during the remainder of this proof we will write $\1_\v{c}=\1_{(\v{c}_{k}\in\v{c}_n^\downarrow)}$ and $\1_\v{s}=\1_{(\v{s}_{k}\in\v{c}_n^\downarrow)}$. We define also $\1_{!\v{c}}=1-\1_{\v{c}}$ and $\1_{!\v{s}}=1-\1_{\v{s}}$, and also $\1_{\v{c}\cup\v{s}}=\1_{(\v{c}_{k}\in \v{c}_n^\downarrow\text{ or }\v{s}_{k}\in \v{s}_n^\downarrow)}$.
From \eqref{eq:Gm_recursion_pre}, for $k<n$ we have
\begin{align*}
G^n_k&=(G^n_{k+1}-A^n_k+B^n_k)\l(\1_{\v{c}}+\1_{!\v{c}}\1_{\v{s}}+\1_{!\v{c}}\1_{!\v{s}}\r)
\end{align*}
because the final bracket sums to $1$. Note that $B^n_k=\1_{\v{c}}R_{k}$.
Note also that if $\1_{!\v{c}}\1_{!\v{s}}=1$ then $A^n_k=0$.
Thus,
\begin{align*}
G^n_k
&= \l(G^n_{k+1}-A^n_k+R_{k}\r)\1_{\v{c}}+\l(G^n_{k+1}-A^n_k\r)\1_{!\v{c}}\1_{\v{s}}+G^n_{k+1}\1_{!\v{c}}\1_{!\v{s}}.
\end{align*}
Squaring both sides,
\begin{align}
(G^n_k)^2
&= \l(G^n_{k+1}-A^n_k+R_{k}\r)^2\1_{\v{c}}+\l(G^n_{k+1}-A_k^n\r)^2\1_{!\v{c}}\1_{\v{s}}+(G^n_{k+1})^2\1_{!\v{c}}\1_{!\v{s}}\notag\\
&=(G^n_{k+1})^2+2G^n_{k+1}\l[(R_{k}-A^n_k)\1_\v{c}-A^n_k\1_{!\v{c}}\1_\v{s}\r]+(R_{k}-A^n_k)^2\1_\v{c}+(A^n_k)^2\1_{!\v{c}}\1_{\v{s}}\notag\\
&\leq (G^n_{k+1})^2+2G^n_{k+1}\l[R_{k}\1_{\v{c}}-A^n_k\1_{\v{c}\cup\v{s}}\r] + R_{k}^2\1_{\v{c}} + (A^n_k)^2\1_{\v{c}\cup\v{s}}\notag\\
&= (G^n_{k+1})^2+2G^n_{k+1}\l[R_{k}\1_{\v{c}}-A^n_k\r] + R_{k}^2\1_{\v{c}} + (A^n_k)^2. \label{eq:Gk2_lower_pre}
\end{align}
To deduce the third line of the above from the second, we recall that $A^n_k\geq 0$, and to deduce the final line we use also that $\1_{\v{c}\cup\v{s}}=1$ if and only if $A^n_k\neq 0$.

We now prepare to take conditional expectation of both sides of \eqref{eq:Gk2_lower_pre}, with respect to $\mathscr{G}_{k+1}$. With this goal in mind we note that
$$\E\l[\1_\v{c}\,\Big|\,\mathscr{G}_{k+1}\r]
=1-\l(1-\frac{1}{2k+2}\r)^{G^n_{k+1}}
\leq \frac{G^n_{k+1}}{2k+2}
$$
The first equality follows from the same calculation as in \eqref{eq:Gk_gain_inter_pre} and \eqref{eq:Gk_gain_inter} (but without the $R_{k}$ term present), and the inequality then follows from \eqref{eq:taylor}.
Recall that $G^n_{k+1}$ is $\mathscr{G}_{k+1}$ measurable, but that $R_{k}$ is independent of $\mathscr{G}_{k+1}$ and of $\1_{\v{c}}$. Lastly, recall that we have Lemma \ref{lem:Gk_1st} and \eqref{eq:Ak2_E} to control $\E[A^n_k\|\mathscr{G}_{k+1}]$ and $\E\l[(A^n_k)^2\|\mathscr{G}_{k+1}\r]$ respectively.
Putting all these facts together, we obtain from \eqref{eq:Gk2_lower_pre} that
\begin{align*}
&\E[(G^n_k)^2\|\mathscr{G}_{k+1}]\\
&\hspace{2pc}\leq (G^n_{k+1})^2+2G^n_{k+1}\l(\E[R]\frac{G^n_{k+1}}{2k+2}-\frac{G^n_{k+1}}{k+1}\r)+\E[R^2]\frac{G^n_{k+1}}{2k+2}+\frac{G^n_{k+1}}{k+1}+\l(\frac{G^n_{k+1}}{k+1}\r)^2\notag\\
&\hspace{2pc}=(G^n_{k+1})^2\l(1+\frac{2(\zeta-1)}{k+1}+\frac{1}{(k+1)^2}\r)+\frac{\E[R^2]+2}{2}\frac{G^n_{k+1}}{k+1}
\end{align*}
To ease our notation, and with a view to eventually applying Lemma \ref{lem:key_asym}, for the remainder of this proof we will write $\gamma_j=j^{-2}$ and $\theta=\frac{1}{2}(\E[R^2]+2)$. Thus, taking expectations, we obtain
\begin{equation}\label{eq:Gk2_iter_pre}
\E\l[(G^n_k)^2\r]\leq \E\l[(G^n_{k+1})^2\r]\l(1+\frac{2(\zeta-1)}{k+1}+\gamma_{k+1}\r)+\theta\frac{\E[G^n_{k+1}]}{k+1}
\end{equation}
Therefore, iterating \eqref{eq:Gk2_iter_pre},
\begin{align}
\E\l[(G^n_k)^2\r]
&=\E\l[(G^n_n)^2\r]\prod\limits_{j=k+1}^n\l(1+\frac{2(\zeta-1)}{j}+\gamma_j\r)+\theta\sum\limits_{j=k+1}^n\frac{\E[G^n_j]}{j}\prod\limits_{l=k+1}^{j-1}\l(1+\frac{2(\zeta-1)}{l}+\gamma_l\r)\label{eq:Gk2_iter_pre_2}
\end{align}
We now apply Lemma \ref{lem:key_asym}, but we must be careful because of the summation over $j$. 
Let $\epsilon>0$.
By Lemma \ref{lem:key_asym} there exists $K=K_\epsilon\in\N$ such that for all $j>k\geq K$ we have
$\prod_{l=k+1}^{j-1}(1+\frac{2(\zeta-1)}{l}+\gamma_l)\leq (\frac{j-1}{k+1})^{2(\zeta-1)}(1+\epsilon).$
Similarly, by Lemma \ref{lem:Gk_1st_upper}, increasing $K$ if necessary, for all $n\geq j\geq K$ we have that 
$\E[G^n_j]\leq 2\zeta(\frac{n}{j})^{\zeta-1}(1+\epsilon)$.
Putting both these facts into \eqref{eq:Gk2_iter_pre_2}, 
for $n\geq k\geq K$ we have
\begin{align}
\E\l[(G^n_k)^2\r]
&\leq \E[R^2]\l(\frac{n}{k}\r)^{2(\zeta-1)}(1+\epsilon)+2\zeta\theta\sum\limits_{j=k+1}^n \l(\frac{n}{j}\r)^{\zeta-1}\frac{1}{j}\l(\frac{j-1}{k+1}\r)^{2(\zeta-1)}(1+\epsilon)^2\notag\\
&\leq (1+\epsilon)^2\l(\E[R^2]\l(\frac{n}{k}\r)^{2(\zeta-1)}+2\zeta\theta\sum\limits_{j=k+1}^n \l(\frac{n}{j}\r)^{\zeta-1}\frac{1}{j}\l(\frac{j}{k}\r)^{2(\zeta-1)}\r)\notag\\
&=(1+\epsilon)^2\l(\E[R^2]\l(\frac{n}{k}\r)^{2(\zeta-1)}+2\zeta\theta\l(\frac{n}{k}\r)^{\zeta-1}\frac{1}{k^{\zeta-1}}\sum\limits_{j=k+1}^n j^{\zeta-2}\r).\label{eq:Gk2_iter}
\end{align}
We have $\zeta>1$, so
$$\sum_{j=k+1}^n j^{\zeta-2}
\leq \int_{k}^{n+1} j^{\zeta-2}\,dj
\leq \frac{1}{\zeta-1}(n+1)^{\zeta-1}.
$$
Again increasing $K$ if necessary, we may assume that for $n\geq K$ we have $\sum_{j=k+1}^n j^{\zeta-2}\leq \frac{1}{\zeta-1}n^{\zeta-1}(1+\epsilon)$.
Thus, continuing from \eqref{eq:Gk2_iter} we obtain that for $n\geq k \geq K$,
\begin{align}
\E\l[(G^n_k)^2\r]
&\leq (1+\epsilon)^3\l( \E[R^2]\l(\frac{n}{k}\r)^{2(\zeta-1)}+2\frac{\zeta\theta}{\zeta-1}\l(\frac{n}{k}\r)^{\zeta-1}\l(\frac{n}{k}\r)^{\zeta-1}\r)\label{eq:Gk2_iter_post}\\
&=(1+\epsilon)^3\l(\E[R^2]+2\frac{\zeta\theta}{\zeta-1}\r)\l(\frac{n}{k}\r)^{2(\zeta-1)}.\notag\\
&=(1+\epsilon)^3\xi \l(\frac{n}{k}\r)^{2(\zeta-1)}\notag
\end{align}
In the above we take $\xi=\E[R^2]+2\frac{\zeta\theta}{\zeta-1}$.
Since $\E[R^2]\geq 4\zeta^2$ we have $\theta\geq 2\zeta^2+1$ and thus $\xi> 8\zeta^2+2$.
The stated result follows, because $\epsilon>0$ was arbitrary.
\end{proof}

As we can see from \eqref{eq:Gk2_iter_post}, the second term on the right hand side of \eqref{eq:Gk2_iter_pre} turns out, after iteration, to be of the same order as the first. Roughly speaking, the first term of \eqref{eq:Gk2_iter_post} corresponds to branching, and the second to (an over-estimate of) coalescing.
We will see the same pattern in the calculations following \eqref{eq:Gk_lower_control}, below.

We will now set $k$ to depend on $n$, such that $k_n\sim Cn^\beta$ with a suitable $C\in(0,\infty)$. This corresponds to the furthest backwards in time (from $n$) that the iterative methods used in this section are capable of seeing. In the first part of next lemma we can see that even reducing the value of $C$ towards $0$ results in the lower bound for $\frac{1}{k}\E[G^n_k]$ drifting away from the upper bound.
This root cause is as follows: in order to remain tractable the proof of Lemma \ref{lem:Gk2_upper} has used a slight underestimate of the coalescence part, in \eqref{eq:Gk2_lower_pre}. Once coalescences become non-negligible, which occurs around $k\approx n^\beta$ when $\frac{1}{k}\E[G^n_k]$ becomes non-trivial, that estimate breaks down. 

\begin{lemma}\label{lem:branching_end_control}
Let $C\in(0,\infty)$. Suppose that $k=k_n$ is such that $k\leq n$ and $k\sim Cn^\beta$.  
Then as $n\to\infty$ we have
\begin{enumerate}
\item $\frac{2\zeta}{C^\zeta}\l(1-\frac{\xi}{8\zeta C^{\zeta}}\r)\,\lesssim\, \frac{1}{k}\E[G^n_k] \,\lesssim\, \frac{2\zeta}{C^{\zeta}}$,
\item $\frac{1}{k^2}\E\l[(G^n_k)^2\r]\lesssim \frac{\xi}{C^{2\zeta}}.$
\end{enumerate}
\end{lemma}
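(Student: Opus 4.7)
Proof plan. The upper bounds in parts 1 and 2 will drop out by substituting $k \sim Cn^\beta$ directly into Lemmas \ref{lem:Gk_1st_upper} and \ref{lem:Gk2_upper}. Since $\beta = (\zeta-1)/\zeta$, we have $n/k \sim n^{1/\zeta}/C$ and hence $(n/k)^{\zeta-1} \sim n^\beta/C^{\zeta-1}$. Dividing $2\zeta(n/k)^{\zeta-1}$ by $k \sim Cn^\beta$ gives the upper bound $\frac{2\zeta}{C^\zeta}$; dividing $\xi(n/k)^{2(\zeta-1)} \sim \xi n^{2\beta}/C^{2(\zeta-1)}$ by $k^2 \sim C^2 n^{2\beta}$ gives part 2.

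For the lower bound in part 1, the first step is to derive a matching lower recursion for $\E[G^n_k]$. Starting from Lemma \ref{lem:Gk_1st} and the right-hand inequality of \eqref{eq:taylor}, I would bound
\[
1-\left(1-\tfrac{1}{2(k+1)}\right)^{G^n_{k+1}} \geq \frac{G^n_{k+1}}{2(k+1)} - \frac{(G^n_{k+1})^2}{8(k+1)^2},
\]
which after substitution yields
\[
\E[G^n_k \,|\, \mathscr{G}_{k+1}] \geq G^n_{k+1}\left(1+\frac{\zeta-1}{k+1}\right) - \frac{\zeta (G^n_{k+1})^2}{4(k+1)^2}.
\]
Taking expectations and iterating from $k$ up to $n$ (exactly as in \eqref{eq:Gk2_iter_pre_2}), I would get
\[
\E[G^n_k] \geq \E[G^n_n]\prod_{j=k+1}^n\left(1+\tfrac{\zeta-1}{j}\right) - \sum_{j=k+1}^n \frac{\zeta \E[(G^n_j)^2]}{4j^2}\prod_{l=k+1}^{j-1}\left(1+\tfrac{\zeta-1}{l}\right).
\]

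The next step is to estimate both pieces asymptotically. Since $\E[G^n_n] = 2\zeta$, Lemma \ref{lem:key_asym} gives $\prod_{j=k+1}^n(1+(\zeta-1)/j) \sim (n/k)^{\zeta-1}$, making the leading term $\sim 2\zeta (n/k)^{\zeta-1}$. For the error sum I would feed in the upper bound $\E[(G^n_j)^2] \lesssim \xi(n/j)^{2(\zeta-1)}$ from Lemma \ref{lem:Gk2_upper} together with $\prod_{l=k+1}^{j-1}(1+(\zeta-1)/l) \lesssim (j/k)^{\zeta-1}$, reducing everything to $\frac{\zeta\xi}{4}\,\frac{n^{2(\zeta-1)}}{k^{\zeta-1}}\sum_{j=k+1}^n j^{-\zeta-1}$; the remaining sum is bounded above by $\int_k^\infty j^{-\zeta-1}\,dj = 1/(\zeta k^\zeta)$, giving an error term $\lesssim \frac{\xi n^{2(\zeta-1)}}{4k^{2\zeta-1}}$.

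To conclude, I would divide through by $k$ and specialise $k \sim Cn^\beta$, using the key exponent identity $2(\zeta-1) - (2\zeta-1)\beta = \beta$ so that $\frac{n^{2(\zeta-1)}}{k^{2\zeta-1}\cdot k} \sim \frac{1}{C^{2\zeta}}$. This yields
\[
\frac{1}{k}\E[G^n_k] \gtrsim \frac{2\zeta}{C^\zeta} - \frac{\xi}{4C^{2\zeta}} = \frac{2\zeta}{C^\zeta}\left(1-\frac{\xi}{8\zeta C^\zeta}\right),
\]
as required. The main obstacle is propagating Lemmas \ref{lem:key_asym} and \ref{lem:Gk2_upper} uniformly in $j$ inside the summation: both results are asymptotic, but the bound on the product $\prod_{l=k+1}^{j-1}$ must hold for all $j$ up to $n$ simultaneously. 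I would handle this in the same $\epsilon$-style as in the proof of Lemma \ref{lem:Gk2_upper}, picking a single large $K$ beyond which both estimates hold with a uniform $(1+\epsilon)$ factor, absorbing a bounded number of early terms into an $o(1)$ correction, and then letting $\epsilon \downarrow 0$.
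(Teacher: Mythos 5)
Your proposal is correct and follows essentially the same route as the paper: the upper bounds come from substituting $k\sim Cn^\beta$ into Lemmas \ref{lem:Gk_1st_upper} and \ref{lem:Gk2_upper}, and the lower bound is obtained via the identical recursion $\E[G^n_k\|\mathscr{G}_{k+1}]\geq G^n_{k+1}(1+\frac{\zeta-1}{k+1})-\frac{\zeta}{4}(\frac{G^n_{k+1}}{k+1})^2$ from Lemma \ref{lem:Gk_1st} and \eqref{eq:taylor}, iterated and then controlled by Lemma \ref{lem:key_asym}, Lemma \ref{lem:Gk2_upper} and the integral comparison $\sum_{j>k}j^{-\zeta-1}\leq\frac{1}{\zeta}k^{-\zeta}$, with the same uniform-$\epsilon$ bookkeeping the paper uses.
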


\begin{proof}
Let us first prove the two asymptotic upper bounds.
Recall that $\beta\in(0,1)$ was defined in \eqref{eq:zeta_beta_def} and
note that $(1-\beta)\zeta=1$.
Thus, since $k\sim Cn^\beta$, we have $\l(\frac{n}{k}\r)^{\zeta-1}\sim\l(\frac{n}{Cn^\beta}\r)^{\zeta-1}=\frac{1}{C^{\zeta-1}}n^\beta\sim\frac{1}{C^\zeta}k$. The first upper bound now follows from Lemma \ref{lem:Gk_1st_upper} and the second upper bound follows from Lemma \ref{lem:Gk2_upper}.
It remains to prove the lower bound from the first claim of the lemma.
We have
\begin{align}
\E[G^n_k\|\mathscr{G}_{k+1}]
&\geq G^n_{k+1}-\frac{G^n_{k+1}}{k+1}+2\zeta\l(\frac{G^n_{k+1}}{2k+2}-\binom{G^n_{k+1}}{2}\frac{1}{4(k+1)^2}\r)\notag\\
&=G^n_{k+1}\l(1+\frac{\zeta-1}{k+1}\r)-\zeta\binom{G^n_{k+1}}{2}\frac{1}{2(k+1)^2}\notag\\
&\geq G^n_{k+1}\l(1+\frac{\zeta-1}{k+1}\r)-\frac{\zeta}{4}\l(\frac{G^n_{k+1}}{k+1}\r)^2.\label{eq:Gk_lower_control}
\end{align}
Here, the first line follows from Lemma \ref{lem:Gk_1st} and the right hand side of \eqref{eq:taylor},
and the second and third lines are elementary computations.
Iterating \eqref{eq:Gk_lower_control}, we obtain
\begin{align}
\E[G^n_k]
&\geq \E[G^n_n]\prod\limits_{j=k+1}^{n}\l(1+\frac{\zeta-1}{j}\r)-\frac{\zeta}{4}\sum\limits_{j=k+1}^n\frac{\E[(G^n_j)^2]}{j^2}\prod\limits_{l=k+1}^{j-1}\l(1+\frac{\zeta-1}{l}\r).\label{eq:Gk_lower_control_2}
\end{align}
We now seek to apply Lemma \ref{lem:key_asym}, but again we must take care to handle the summation over $j$.
Let $\epsilon>0$.
By Lemma \ref{lem:key_asym} there exists $K=K_\epsilon\in\N$ such that for all $j>k\geq K$ we have
$$
\l(\frac{j}{k}\r)^{\zeta-1}(1-\epsilon)\leq \prod_{l=k+1}^{j-1}\l(1+\frac{\zeta-1}{l}\r)\leq \l(\frac{j}{k}\r)^{\zeta-1}(1+\epsilon).$$
Similarly, by Lemma \ref{lem:Gk2_upper}, increasing $K$ if necessary, we may assume that for all $n\geq j\geq K$ we have
$\E[(G^n_j)^2]\leq (\frac{n}{j})^{2(\zeta-1)}(1+\epsilon)$.
We thus obtain from \eqref{eq:Gk_lower_control_2} that for $n\geq j>k\geq K$,
\begin{align}
\E[G^n_k]
&\geq 2\zeta\l(\frac{n}{k}\r)^{\zeta-1}(1-\epsilon)-\frac{\zeta\xi}{4}\sum\limits_{j=k+1}^n\l(\frac{n}{j}\r)^{2(\zeta-1)}\l(\frac{j}{k}\r)^{\zeta-1}(1+\epsilon)^2\notag\\
&=(1-\epsilon)(2\zeta)\l(\frac{n}{k}\r)^{\zeta-1}-(1+\epsilon)^2\frac{\zeta\xi}{4}\l(\frac{n}{k}\r)^{\zeta-1}n^{\zeta-1}\sum\limits_{j=k+1}^n j^{-\zeta-1}.\label{eq:Gk_lower_control_post}
\end{align}
We have $\zeta>1$, so
$\sum_{j=k+1}^n j^{-\zeta-1}\leq \int_{k}^{n} j^{-\zeta-1}\,dj\leq \frac{1}{\zeta}k^{-\zeta}.$
Thus, from \eqref{eq:Gk_lower_control_post} we obtain
\begin{align*}
\E[G^n_k]
&\geq (1-\epsilon)(2\zeta)\l(\frac{n}{k}\r)^{\zeta-1}-(1+\epsilon)^2\frac{\xi}{4}\l(\frac{n}{k}\r)^{\zeta-1}n^{\zeta-1}k^{-\zeta}\\
&=\l(\frac{n}{k}\r)^{\zeta-1}\l((1-\epsilon)(2\zeta)-(1+\epsilon)^2\frac{\xi}{4}\l(\frac{n}{k}\r)^{\zeta-1}\frac{1}{k}\r).
\end{align*}
Recall that $k\sim Cn^\beta$ and $(1-\beta)\zeta=1$. Therefore, increasing $k$ again if necessary, we may assume that for all $n\geq k\geq K$ we have $\l(\frac{n}{k}\r)^{\zeta-1}\frac{1}{k}\geq \frac{1}{C^\zeta}(1-\epsilon)$.
Thus for $n\geq k\geq K$ we have
\begin{align*}
\frac{1}{k}\E[G^n_k]
&\geq 
\frac{1}{C^\zeta}(1-\epsilon)\l((1-\epsilon)(2\zeta)-(1+\epsilon)^2\frac{\xi}{4}\frac{1}{C^\zeta}(1-\epsilon)\r)\\
&\geq
(1-\epsilon)^2\frac{2\zeta}{C^\zeta}\l(1-\frac{\xi}{8\zeta C^\zeta}\r)-3\epsilon\frac{\xi}{4C^\zeta}.
\end{align*}
The asymptotic lower bound claimed in the first part of the lemma now follows because $\epsilon>0$ was arbitrary. This completes the proof.
\end{proof}


\begin{proof}[Of Proposition \ref{prop:branching_phase_pickup}.]
Recall the definition on $N^n_{i,i'}$ in \eqref{eq:Nkip_def}.
Let $C>0$. We have
\begin{align}
\E\l[N^n_{Cn^\beta,n}\r]
&=\sum\limits_{k=Cn^\beta}^n\E\l[\sum\limits_{j=k+1}^n\1_{\l(\v{c}_j\in\v{c}_n^\downarrow\r)}\sum\limits_{l=1}^{R_j}\1_{\l(\v{p}_{j,l}\in\{\v{c}_{k},\v{s}_{k}\}\r)}\1_{\l(\v{p}_{j,l}=\v{s}_k\r)}\r]\notag\\
&=\sum\limits_{k=Cn^\beta}^n\E\l[\sum\limits_{j=k+1}^n\1_{\l(\v{c}_j\in\v{c}_n^\downarrow\r)}\sum\limits_{l=1}^{R_j}\1_{\l(\v{p}_{j,l}\in\{\v{c}_{k},\v{s}_{k}\}\r)}\r]\frac{1}{2}\notag\\
&=\frac{1}{2}\sum\limits_{k=Cn^\beta}^n\E\l[A^n_k\r].\label{eq:Nn_pre}
\end{align}
In the above, the first line follows from \eqref{eq:Nk_def}, \eqref{eq:Nkip_def} and from noting that $\v{s}_k\in\{\v{s}_k,\v{c}_k\}$.
To deduce the second line we use $(\dagger)$ which implies that, for each $j$, $\P[\v{p}_{j,l}=\v{s}_k\|\v{p}_{j,l}\in\{\v{s}_k,\v{c}_k\}, \v{c}_j\in\v{c}_n^\downarrow]=\frac12$.
The third line then follows by \eqref{eq:Gk_loss}.

Combining Lemmas \ref{lem:Gk_1st} and \ref{lem:Gk_1st_upper} we obtain $\E[A^n_k]=\frac{\E[G^n_{k+1}]}{k+1}\lesssim 2\zeta \frac{1}{k}\l(\frac{n}{k}\r)^{\zeta-1}$. 
Thus, for any $\epsilon>0$ there exists $K$ such that for all $n\geq k\geq K$, 
$\E[A^n_k]\leq 2\zeta \frac{1}{k}\l(\frac{n}{k}\r)^{\zeta-1}(1+\epsilon)$. 
Combing this with \eqref{eq:Nn_pre} obtains
\begin{align*}
\E\l[N^n_{Cn^\beta,n}\r]
&\leq (1+\epsilon)\zeta n^{\zeta-1}\sum\limits_{k=Cn^\beta}^n k^{-\zeta}\\
&\leq (1+\epsilon)\zeta n^{\zeta-1} \frac{(Cn^\beta)^{1-\zeta}}{\zeta-1}\\
&=(1+\epsilon)\frac{n^\beta}{\beta C^{\zeta-1}}.
\end{align*}
To deduce the final line of the above we use $\beta\zeta=\zeta-1$.
The proposition follows because $\epsilon>0$ was arbitrary.
\end{proof}

\subsection{The branching-coalescing phase of the genealogy}
\label{sec:ancestry_stablization}

We now turn our attention to look further backwards into the genealogy of $\v{c}^\downarrow_n$, in particular at the full range of times of order $n^\beta$.
It is during this window of time that coalescences become frequent.
With this in mind, we will now need to count potential parents of ancestors of $\v{c}^\downarrow_n$ \textit{without} multiplicity. In fact, it will also become useful to record their identities.
To this end, for convenience we reproduce \eqref{eq:H_def} here:
\begin{align}
\mc{H}^n_k&=\{\v{u}\in\mc{U}_{k}\-\v{u}\in\mc{P}_j\text{ for some }\v{c}_j\in\v{c}^\downarrow_n\text{ with }j\geq k+1\},\notag\\
H^n_{k}&=|\mc{H}^n_k|.\notag
\end{align}
Note that $\mc{H}^n_k$ is the set of balls that were born (non-strictly) before time $k$,
and were a potential parent of some $\v{c}_j\in\v{c}^\downarrow_n$ where $j>k$.
Thus, comparing to \eqref{eq:Gk_def}, $H^n_k$ is `$G^n_{k+1}$ counted without multiplicity'.
The apparently incongruity between $k$ and $k+1$ will not bother us, because we will shortly shift our emphasis entirely from $G^n_k$ to $H^n_k$. Moreover, in this section is it advantageous that $H^n_k$ relates to $\mc{U}_k$ and not to $\mc{U}_{k-1}$.
Let us now upgrade $(\dagger)$ and Lemma \ref{lem:branching_end_control} to handle $H^n_k$.
\begin{itemize}
\item[$(\dagger\dagger)$] The conditional distribution of $\mc{H}^n_k$ given $H^n_k$ is uniform on the set of subsets of $\mc{U}_{k}$ that have size $H^n_k$.
\end{itemize}
\begin{proof}[Of $(\dagger\dagger)$.] Recall the definition of $G^n_{k+1}$ from \eqref{eq:Gk_def}: it counts the number of times a parent of some $\v{c}^\downarrow_j$ (with $k+1\leq j\leq n$) was an element of $\mc{U}_{k}$. By $(\dagger)$, each such parent is a uniformly sampled element of $\mc{U}_{k}$, independently of all else.
\end{proof}

For the remainder of Section \ref{sec:ancestry_stablization}, we fix a pair of constants $c,C$ such that $0<c<C<\infty$. 
It is understood that they will be chosen dependent upon the common distribution of the $R_k$.
We assume, without loss of generality, that both $cn^\beta$ and $Cn^\beta$ are integer c.f.~Remark \ref{rem:Cinteger}.

\begin{lemma}
\label{lem:br_end_H}
Suppose that $k=k_n\sim Cn^\beta$. Then, as $n\to\infty$ we have
\begin{enumerate}
\item $\frac{2\zeta}{C^\zeta}\l(1-\frac{\xi}{8\zeta C^{\zeta}}\r)\,\lesssim\, \frac{1}{k}\E[H^n_k] \,\lesssim\, \frac{2\zeta}{C^{\zeta}}$,
\item $\frac{1}{k^2}\E\l[(H^n_k)^2\r]\lesssim \frac{\xi}{C^{2\zeta}}.$
\end{enumerate}
\end{lemma}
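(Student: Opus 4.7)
The starting point is the pointwise inequality $H^n_k\le G^n_{k+1}$, since both quantities enumerate exactly the same collection of potential-parent picks that land in $\mc{U}_k$ (namely those coming from $\v{c}_j\in\v{c}^\downarrow_n$ with $j\ge k+1$), but $G^n_{k+1}$ counts with multiplicity whereas $H^n_k$ does not. Given this, the upper bound in part (1) is immediate from Lemma \ref{lem:branching_end_control}(1) and the upper bound in part (2) follows from Lemma \ref{lem:Gk2_upper}, both applied with $k+1$ in place of $k$; since $k\sim Cn^\beta$ implies $k+1\sim Cn^\beta$ and $k/(k+1)\to 1$, the asymptotic constants pass through unchanged.

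The content of the lemma is the lower bound in part (1), which I would address via a balls-into-bins control of the coalescence discrepancy $G^n_{k+1}-H^n_k$. By $(\star)$ and $(\dagger)$ from Section \ref{sec:ancestry_branching}, conditional on $\mathscr{G}_{k+1}$ the $N:=G^n_{k+1}$ potential-parent picks landing in $\mc{U}_k$ are i.i.d.\ uniform on $\mc{U}_k$, and $H^n_k$ is exactly the number of distinct targets occupied. Writing $B=2(k+1)$, the right-hand side of \eqref{eq:taylor} yields
\begin{equation*}
\E\l[H^n_k\,\Big|\,\mathscr{G}_{k+1}\r]=B\l(1-\l(1-\tfrac{1}{B}\r)^{N}\r)\;\ge\; N-\frac{N(N-1)}{2B}.
\end{equation*}
Taking expectations, substituting $\E[N^2]\lesssim \xi(n/(k+1))^{2(\zeta-1)}$ from Lemma \ref{lem:Gk2_upper}, and using the same asymptotic identity $(n/(k+1))^{2(\zeta-1)}/(k+1)\sim k/C^{2\zeta}$ that appears in the proof of Lemma \ref{lem:branching_end_control} gives $\E[G^n_{k+1}-H^n_k]$ of order $\xi k/C^{2\zeta}$. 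Subtracting this from the lower bound on $\E[G^n_{k+1}]/k$ supplied by Lemma \ref{lem:branching_end_control} then yields an asymptotic lower bound of the claimed shape $\frac{2\zeta}{C^\zeta}\l(1-\mathrm{const}\cdot\xi/(\zeta C^\zeta)\r)$.

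The delicate point I anticipate is matching the precise coefficient $\frac{1}{8\zeta}$ appearing in the statement. A naive subtraction of the two bounds above picks up the loss $\xi/(4C^{2\zeta})$ once from Lemma \ref{lem:branching_end_control} and once from the balls-into-bins correction, giving effective coefficient $\frac{1}{4\zeta}$ rather than $\frac{1}{8\zeta}$. To recover the sharper constant I would replace the two-step approach by a direct backward recursion for $H^n_k$ itself, namely
\begin{equation*}
H^n_k=H^n_{k+1}-A'^n_k+B'^n_k,
\end{equation*}
where $A'^n_k=|\mc{H}^n_{k+1}\cap\{\v{s}_{k+1},\v{c}_{k+1}\}|$ has conditional mean $H^n_{k+1}/(k+2)$ by $(\dagger\dagger)$ applied at level $k+1$, and $B'^n_k=|\{\v{p}_{k+1,l}\}_{l=1}^{R_{k+1}}\setminus\mc{H}^n_{k+1}|\cdot\1_{(\v{c}_{k+1}\in\v{c}^\downarrow_n)}$ has a conditional mean computed by combining $(\dagger)$ for the sampling of the $\v{p}_{k+1,l}$ with $(\dagger\dagger)$ for the law of $\mc{H}^n_{k+1}$. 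This produces a one-step drift inequality of the same structural form as \eqref{eq:Gk_lower_control}, so that Lemma \ref{lem:key_asym} can be invoked in direct analogy with the proof of Lemma \ref{lem:branching_end_control}, absorbing the coalescence correction into a single telescoping sum and giving the stated coefficient.
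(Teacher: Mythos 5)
Your main argument is exactly the paper's proof of this lemma: the two upper bounds come from the pointwise inequality $H^n_k\le G^n_{k+1}$ together with Lemma \ref{lem:branching_end_control}, and the lower bound comes from the occupancy identity $\E[H^n_k\,|\,G^n_{k+1}]=|\mc{U}_k|\bigl(1-(1-\tfrac{1}{|\mc{U}_k|})^{G^n_{k+1}}\bigr)$ expanded via \eqref{eq:taylor}, giving $\E[H^n_k]\ge\E[G^n_{k+1}]-\tfrac{1}{4(k+1)}\E[(G^n_{k+1})^2]$ and then invoking Lemmas \ref{lem:branching_end_control} and \ref{lem:Gk2_upper}. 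Your worry about the coefficient is correct arithmetic but misdirected: this subtraction yields $\frac{2\zeta}{C^\zeta}\bigl(1-\frac{\xi}{4\zeta C^\zeta}\bigr)$, and that is also what the paper's own computation delivers -- indeed, at the one place the lower bound is subsequently used, in the proof of Proposition \ref{prop:stablization_bounds}, the paper quotes it in exactly this $\frac{1}{4\zeta}$ form ($\E[Y^n_0]\gtrsim\frac12\frac{2\zeta}{C^\zeta}(1-\frac{\xi}{4\zeta C^\zeta})$). The $\frac{1}{8\zeta}$ in the statement is best read as a bookkeeping slip rather than a sharper fact you must reproduce: every downstream application (the Paley--Zygmund step, the choice of $\mc{X}^n_0$, and positivity of the bracket in the proof of Theorem \ref{thm:extensive_condensation_graph}) only needs the bracket to be at least $\tfrac12$, which holds for either constant once $C>\xi^{1/\zeta}$ and $\zeta>1$.

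Concerning your proposed repair via a direct backward recursion for $H^n_k$: the recursion itself is valid (it is precisely the transition underlying Lemma \ref{lem:Xnj_markov}), but it will not recover the sharper coefficient, and it is in any case unnecessary. In the one-step drift the removal term is exactly linear in $H^n_{k+1}$ by $(\dagger\dagger)$, but the gain term must be thinned by collisions of the new picks with $\mc{H}^n_{k+1}$, and the lower bound in \eqref{eq:Brab} then produces a quadratic penalty of order $\frac{\zeta}{2}\frac{(H^n_{k+1})^2}{(k+1)^2}$, i.e.\ twice the $\frac{\zeta}{4}$ coefficient appearing in \eqref{eq:Gk_lower_control}. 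Iterating as in Lemma \ref{lem:branching_end_control} -- where you would still need the second-moment input $\E[(H^n_j)^2]\le\E[(G^n_{j+1})^2]$ from Lemma \ref{lem:Gk2_upper} over the whole range $k\le j\le n$, so the two-step structure is not really avoided -- accumulates a total loss of $\frac{\xi}{2C^{2\zeta}}$, which is again the constant $\frac{1}{4\zeta}$ you already have. So simply record the lower bound with the slightly weaker constant and note that it suffices for all later uses, rather than invest in the direct recursion.
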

\begin{proof}
It is immediate that $H^n_k\leq G^n_{k+1}$.
Thus both asymptotic upper bounds follow from their counterparts in Lemma \ref{lem:branching_end_control}.
For the lower bound, by symmetry
we have 
$\E[H^n_k\|G^n_{k+1}]=2k\P[\v{b}\in\mc{H}^n_k\|G^n_{k+1}],$ 
where $\v{b}$ is any fixed ball in $\in\mc{U}_k$. 
By $(\dagger)$ we have $\P[\v{b}\notin\mc{H}^n_k\|G^n_{k+1}]=\big(1-\frac{1}{2k}\big)^{G^n_{k+1}}$, 
so $\E[H^n_k\|G^n_{k+1}]=2k\big(1-(1-\frac{1}{2k})^{G^n_{k+1}})$.
Using \eqref{eq:taylor} and taking expectations, we thus obtain that
$\E[G^n_{k+1}]-\frac{1}{2}\frac{1}{2k}\E[(G^n_{k+1})^2]\leq \E[H^n_k].$
Then, using Lemma \ref{lem:branching_end_control} again we have
$\frac{1}{k}\E[H^n_k]\gtrsim \frac{2\zeta}{C^\zeta}-\frac14\frac{\xi}{C^{2\zeta}}$ 
as required. 
\end{proof}

It will be helpful to work with proportions of cue balls rather than with their absolute number. We set
\begin{align}
Z^{n}_j=\frac{1}{2(Cn^\beta-j)+2},\quad\quad\quad\quad Y^{n}_j=Z^{n}_j H^n_{Cn^\beta-j}\label{eq:Ynj_def}
\end{align}
defined for $j=0,1,\ldots,(C-c)n^\beta$. In words, $Z^n_j$ is one over the the number of balls born (non-strictly) before time $Cn^\beta-j$, and $Y^{n}_j$ is the proportion of such balls that constitute $\mc{H}^n_{Cn^\beta-j}$.
The indexing in \eqref{eq:Ynj_def} is in preparation for finding a fluid limit, as $n\to\infty$, of the process $k\mapsto \frac{1}{k}H^n_{k}$ considered \textit{backwards in time} -- that is as $j$ increases and $k$ decreases, during the critical window $k\in[cn^\beta, Cn^\beta]$.
This is somewhat tricky because the process $H^n_k$ is non-Markov, with respect to its generated filtration, and also time-inhomogeneous.
We will see that these technical difficulties may be overcome by taking the limit of $(Y^n_j, Z^n_j)$ under a suitable rescaling of time.

We are now ready to state the major result of this section, Proposition \ref{prop:stablization_bounds}, which will come as a consequence of the aforementioned fluid limit.
We write $Y^n_u=Y^n_j$ for any $u\in[j,j+1)$.

\begin{prop}\label{prop:stablization_bounds}
If $0<c<C<\infty$ and $C>\xi^{1/\zeta}$ then
\begin{equation}\label{eq:stablization_bounds}
\P\l[\text{for all }s\in[0,1],\;Y^n_{s(C-c)n^\beta}\geq \frac{\zeta}{\zeta+e^{c/C}(C-s(C-c))^{2\zeta}}\r]\gtrsim \frac{\zeta^2}{4\xi}.
\end{equation}
\end{prop}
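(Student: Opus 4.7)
I would split the proof into two parts. First, a Paley--Zygmund second-moment argument at the initial time $s=0$ produces a non-vanishing probability that $Y^n_0$ is well above the envelope value there. Second, a propagation step (a discrete ODE approximation driven by conditional drift and variance estimates) shows that, conditional on this good starting value, the envelope is maintained throughout $s\in[0,1]$. The constant $\zeta^2/(4\xi)$ in \eqref{eq:stablization_bounds} comes entirely from the first step; the propagation succeeds with probability $\to 1$.

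For the first step, Lemma \ref{lem:br_end_H} gives $\E[Y^n_0]\gtrsim (\zeta/C^\zeta)(1-\xi/(8\zeta C^\zeta))$ and $\E[(Y^n_0)^2]\lesssim \xi/(4C^{2\zeta})$. The hypothesis $C>\xi^{1/\zeta}$ keeps the correction $(1-\xi/(8\zeta C^\zeta))$ bounded away from $0$, so the Paley--Zygmund inequality at threshold $\tfrac12\E[Y^n_0]$ yields $\P[Y^n_0\geq \tfrac12\E[Y^n_0]]\gtrsim \zeta^2/(4\xi)$. For $n$ large, $\tfrac12\E[Y^n_0]\asymp \zeta/(2C^\zeta)$ comfortably exceeds the envelope value $\zeta/(\zeta+e^{c/C}C^{2\zeta})\asymp \zeta/C^{2\zeta}$ at $s=0$, so the envelope is satisfied initially on this event.

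For the second step, I would compute the conditional one-step dynamics of $Y^n_j$ using $(\dagger\dagger)$. Going from $k+1$ to $k$, the set $\mc{H}^n$ loses $\v{c}_{k+1}$ and $\v{s}_{k+1}$ when they lie in $\mc{H}^n_{k+1}$ (each with conditional probability $Y^n_j$) and, on the event $\v{c}_{k+1}\in\mc{H}^n_{k+1}$, gains the potential parents of $\v{c}_{k+1}$ that are not already in $\mc{H}^n_{k+1}\cap\mc{U}_k$; each such potential parent is a fresh uniform sample from $\mc{U}_k$ and so coincides with an existing member of $\mc{H}^n_{k+1}\cap\mc{U}_k$ with conditional probability $\approx Y^n_j$. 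Expanding to leading order in $1/k$ gives
\[
\E[Y^n_{j+1}-Y^n_j\mid Y^n_j]=\frac{\zeta\,Y^n_j(1-Y^n_j)}{k+1}+O(k^{-2}),
\]
together with a conditional second-moment bound of order $1/k$ (here $\E[R^2]<\infty$ is used). After the rescaling $j=s(C-c)n^\beta$, this is the drift of a logistic-type ODE on $[0,1]$ with $0$ as an unstable and $1$ as a stable fixed point, so a path started at $Y^n_0$ of order $1/C^\zeta$ is driven toward $1$ as $s$ grows. A Doob decomposition combined with a martingale concentration argument shows the rescaled path tracks the deterministic trajectory up to $o(1)$ uniformly over $s\in[0,1]$. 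The envelope in \eqref{eq:stablization_bounds} is then verified as a pathwise lower bound on $Y^n$ using these drift and variance estimates; the particular functional form, with exponent $2\zeta$ and factor $e^{c/C}$ in the denominator, is an explicit choice that sits strictly below the deterministic trajectory arising from the Paley--Zygmund starting value and so leaves room for the martingale fluctuations to be absorbed.

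\textbf{Main obstacle.} The hardest step will be the uniform-in-$s$ envelope control near $s=0$, where $Y^n$ is close to the unstable fixed point $0$ of the ODE and a small amount of stochastic noise can tip the process below the envelope. The Paley--Zygmund buffer from the first step is critical here; to leverage it one must combine the leading-order drift with the conditional second-moment bounds from $(\dagger\dagger)$ to control the martingale part of the Doob decomposition uniformly in $s$, and to verify that the explicit sub-trajectory defining the envelope is genuinely dominated by the random path with the required asymptotic probability. This is also where the delicate balance between the critical-window length $(C-c)n^\beta$ and the initial condition $\approx \zeta/C^\zeta$ plays a role: a larger $C$ would give a longer window but a smaller initial condition, so neither helps one escape from $0$ faster.
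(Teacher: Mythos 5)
Your proposal is correct in outline, and its first step (Paley--Zygmund applied to $Y^n_0$ via the first and second moment bounds of Lemma \ref{lem:br_end_H}, with $C>\xi^{1/\zeta}$ keeping the correction factor above $\tfrac12$) is exactly the paper's opening move, including the fact that the constant $\zeta^2/(4\xi)$ comes entirely from this step. Where you genuinely diverge is the propagation. The paper does \emph{not} run a discrete stochastic-approximation argument: it embeds $(Y^n_j,Z^n_j)$ into a continuous-time Markov process, proves tightness via Aldous's criterion and generator convergence in the sense of Ethier--Kurtz to the logistic ODE \eqref{eq:stablization_ODE}, transfers the pathwise lower bound through the Portmanteau theorem applied to the open set $D_\delta$ of Lemma \ref{lem:portmanteau_prep}, and separately controls the random time change $T^n_j$ against $t(s)$ (Lemma \ref{lem:ts}); because weak convergence needs a limiting law for the initial condition and Lemma \ref{lem:br_end_H} gives only moment bounds, the paper also inserts an artificial thinning of $\mc{H}^n_{Cn^\beta}$ to a set $\mc{M}$ of prescribed size on an event of probability exactly $\zeta^2/(4\xi)$, and uses monotonicity $\hat{Y}^n_j\leq Y^n_j$. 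Your route -- condition on the Paley--Zygmund event, compute the one-step drift $\zeta Y(1-Y)/(k+1)+\mc{O}(k^{-2})$ (which is exactly the paper's generator expansion in Lemmas \ref{lem:Xnj_markov} and \ref{lem:Xt_conv}, including the use of \eqref{eq:Brab} and $\E[R^2]<\infty$), and then track the ODE by a Doob decomposition plus maximal-inequality/Gronwall estimate -- dispenses with the continuous-time embedding, the time-change lemma, and, more interestingly, the thinning trick, since a pathwise comparison only needs a lower bound on $Y^n_0$ on an order-one event together with a concentration event of probability tending to one (so its conditional probability given the good event also tends to one). What the paper's route buys is that all uniform-in-time control is outsourced to standard weak-convergence theorems; what your route buys is a more elementary and more quantitative argument. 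To make it airtight you should (i) take the conditional drift with respect to the reverse-time filtration of $(Y^n_j,Z^n_j)$, i.e.\ use the Markov property of Lemma \ref{lem:Xnj_markov} rather than conditioning on $Y^n_j$ alone; (ii) since $R$ is unbounded, use the conditional second moments and Doob's $L^2$ maximal inequality (not Azuma) to get the uniform $o_P(1)$ fluctuation bound, noting the summed conditional variances are $\mc{O}(n^{-\beta})$ over the $\mc{O}(n^\beta)$ steps of the window; and (iii) verify explicitly that the stated envelope lies uniformly below the logistic trajectory started from the Paley--Zygmund threshold $\approx\zeta/(4C^\zeta)$ -- this reduces to $4/(C^\zeta e^{c/C})<1$, which holds because $\xi>8\zeta^2+2$ forces $C^\zeta>\xi>4$, and the resulting gap is a positive constant independent of $n$, so it absorbs the $o_P(1)$ martingale error even near the unstable fixed point.
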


Let us comment a little further on the strategy we adopt to prove Proposition \ref{prop:stablization_bounds}, and outline where the formula \eqref{eq:stablization_bounds} comes from. 
We look to obtain a fluid limit for the $[0,1]^2$ valued process $(Y^n_j, Z^n_j)$, during time $j=0,1,\ldots,(C-c)n^\beta$.
We will use the framework of weak convergence.
To this end, we parametrize time using $s\in[0,1]$, resulting in times $s(C-c)n^\beta$, but we will see that it is also helpful to make the substitution $t=\log(\frac{C}{C-s(C-c)})$ after which, loosely speaking, the limit of $Y^n_j$ will turn out to be the ordinary differential equation
\begin{equation}\label{eq:stablization_ODE}
\frac{d y(t)}{dt}=2\zeta\, y(t)(1-y(t))
\end{equation}
run for time $t\in[0,\log(C/c)]$, starting from the initial condition $y(0)\approx Y^n_0$.
Equation \eqref{eq:stablization_ODE} is well known -- it is logistic growth at rate $2\zeta$.
The precise formulation of \eqref{eq:stablization_bounds} comes from the explicit solution to \eqref{eq:stablization_ODE}, which 
is 
$y(t)=\frac{A}{A-(A-1)e^{-2\zeta t}}$, where $y(0)=A$.
The limit of $Z^n_j$ will be zero; its presence is solely because $(Y^n_j, Z^n_j)$ is a time-homogeneous Markov process, whereas $(Y^n_j)$ alone is not.

The ODE \eqref{eq:stablization_ODE} has a stable fixed point at $1$ and an unstable fixed point at $0$.
Our initial condition $y(0)$ is positive, resulting in attraction towards $1$ as $t$ increases.
However, the value of $y(0)\approx \frac{2\zeta}{C^\zeta}$ tends to zero as $C\to\infty$, 
as a consequence of Lemma \ref{lem:br_end_H}.
We need to keep enough freedom to choose a large value for $C$ (as we did in Section \ref{sec:ext_cond_proof}).
Heuristically, as $C\to\infty$ we observe \eqref{eq:stablization_ODE} started with a vanishing initial displacement, of order $C^{-\zeta}$, away from its unstable fixed point at $y=0$.
It is not a priori clear if the time interval $t\in[0,\log(C/c)]$ gives long enough to actually escape from $0$; fortunately, we will see that it does. Proposition \ref{prop:stablization_bounds} comes from knowing that $Y^n_j$ behaves similarly to $y(t)$, for large $n$.

With Proposition \ref{prop:stablization_bounds} in hand, the proof of Proposition \ref{prop:ancestry_stablization} is straightforward, so we will give it now; then we will turn our attention to proving Proposition \ref{prop:stablization_bounds}.

\begin{proof}[Of Proposition \ref{prop:ancestry_stablization}, subject to Proposition \ref{prop:stablization_bounds}.]
We have $c\in(0,2)$ and $C>\max(c,\xi^{1/\zeta})$. Thus $\frac{C-2}{C-c}\in(0,1)$.
For $s\in[\frac{C-2}{C-c},1]$ we have
$$\frac{\zeta}{\zeta+e^{c/C}(C-s(C-c))^{2\zeta}}\geq \frac{\zeta}{\zeta+e^{c/C}4^\zeta}.$$
Thus, by Proposition \ref{prop:stablization_bounds}, with probability at least $\frac{\zeta^2}{4\xi}$, we have that
$Y^n_{s(C-c)n^\beta}\geq \frac{\zeta}{\zeta+e^{c/C}4^\zeta}$ for all $s\in[\frac{C-2}{C-c},1]$.
Recalling that $Y^n_j=H^n_{Cn^\beta-j}$, and noting that $s=\frac{C-2}{C-c}$ corresponds to $j=2n^\beta$ whilst $s=1$ corresponds to $j=cn^\beta$,
this gives
$H^n_j\geq \frac{\zeta}{\zeta+e^{c/C}4^\zeta}$ for all $j=cn^\beta,\ldots,2n^\beta$.
The result follows.
\end{proof}


The remainder of this section will focus on proof of Propositon \ref{prop:stablization_bounds}. As we have mentioned, $j\mapsto Y^{n}_j$ is not time-homogeneous, which is due to its dependence on $Z^n_j$. 
However, $X^n_j=(Y^n_j,Z^n_j)$ \textit{is} time-homogeneous, as we will now show.
We require some notation. We define (the function $\cdot'$ as)
\begin{equation*}
z'=\l(\frac{1}{z}-2\r)^{-1}
\end{equation*}
and note that $Z^n_{j+1}=(Z^n_j)'$. 
Let $\mc{B}_{r,a,b}$ be an independent random variable defined as follows.
Take $a\in\N$ boxes, $b\in\N$ of which are marked, and distribute $r$ balls (uniformly at random, with replacement) into these $a$ boxes; $\mc{B}_{r,a,b}$ is the number of newly occupied boxes that are not marked.

\begin{lemma}
\label{lem:Xnj_markov}
Fix $n\in\N$. The process $j\mapsto X^n_j$ is a time-homogeneous $[0,1]^2$ valued Markov process, in discrete time $j=0,1,\ldots,(C-c)n^\beta$.
The law of its one-step transition, given that $X^n_j=(y,z)$, is that of 
\begin{equation}\label{eq:Yj_pregen}
(y,z)\mapsto
\l(y\tfrac{z'}{z}-z'I_0+z'I_1\l(-1+\mc{B}_{R,1/z',y/z}\r),\;z'\r)\\
\end{equation}
where 
$(I_0,I_1)$ is a pair of Bernoulli random variables with distribution given by
$\P[I_0=1]=\P[I_1=1]=y$ and $\P[I_0=1,I_1=1]=\frac{y^2-yz}{1-z}$,
and $R$ is an independent sample of the common distribution of the $(R_j)$.
\end{lemma}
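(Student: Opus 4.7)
The plan is as follows. First, I would dispose of the deterministic coordinate by unpacking the definitions in \eqref{eq:Ynj_def}: setting $k=Cn^\beta-j$ gives $|\mc{U}_k|=2k+2$ and $|\mc{U}_{k-1}|=2k$, so $Z^n_j=1/(2k+2)$ and $Z^n_{j+1}=1/(2k)$, which matches $(1/Z^n_j-2)^{-1}$. Hence $Z^n_{j+1}=(Z^n_j)'$ is a deterministic function of $Z^n_j$ alone.

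The core of the proof is a one-step analysis of the reverse-time transition $\mc{H}^n_k\mapsto \mc{H}^n_{k-1}$. Since $\mc{U}_k=\mc{U}_{k-1}\cup\{\v{c}_k,\v{s}_k\}$, and since the additional cue ball whose potential parents appear in $\mc{H}^n_{k-1}$ but not in $\mc{H}^n_k$ is exactly $\v{c}_k$ (and only if $\v{c}_k$ itself is a potential ancestor of $\v{c}_n$), a direct case analysis from \eqref{eq:H_def} gives
\begin{equation*}
\mc{H}^n_{k-1}=\bigl(\mc{H}^n_k\cap\mc{U}_{k-1}\bigr)\cup\bigl(\mc{P}_k\cdot\1_{\v{c}_k\in\v{c}_n^\downarrow}\bigr).
\end{equation*}
Writing $I_0=\1_{\v{s}_k\in\mc{H}^n_k}$ and $I_1=\1_{\v{c}_k\in\mc{H}^n_k}$ and noting that $\v{c}_k\in\v{c}_n^\downarrow$ iff $\v{c}_k\in\mc{H}^n_k$, taking cardinalities yields
\begin{equation*}
H^n_{k-1}=H^n_k-I_0-I_1+I_1\cdot\bigl|\mc{P}_k\setminus(\mc{H}^n_k\cap\mc{U}_{k-1})\bigr|.
\end{equation*}

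Third, I would extract the conditional laws appearing on the right-hand side. By $(\dagger\dagger)$, conditional on $H^n_k$ the set $\mc{H}^n_k$ is uniform over $H^n_k$-element subsets of $\mc{U}_k$; elementary counting yields the marginal and joint distribution of $(I_0,I_1)$ claimed in the lemma, namely $\P[I_0=1]=\P[I_1=1]=H^n_k/|\mc{U}_k|=y$ and $\P[I_0=I_1=1]=H^n_k(H^n_k-1)/[|\mc{U}_k|(|\mc{U}_k|-1)]=(y^2-yz)/(1-z)$. Conditional on $(I_0,I_1)$ and $H^n_k$, the intersection $\mc{H}^n_k\cap\mc{U}_{k-1}$ is still a uniformly chosen subset of $\mc{U}_{k-1}$. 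The potential parents $\v{p}_{k,1},\ldots,\v{p}_{k,R_k}$ of $\v{c}_k$ are i.i.d.~uniform samples from $\mc{U}_{k-1}$, drawn at time $k$, and are therefore independent of $\mc{H}^n_k$ (which is a functional only of sampling performed at times $\geq k+1$). Hence the number of distinct such draws that land outside the marked subset $\mc{H}^n_k\cap\mc{U}_{k-1}$ has the law $\mc{B}_{R,1/z',\cdot}$ defined in the lemma. Combining this with the identity for $H^n_{k-1}$ above and multiplying by $Z^n_{j+1}=z'$ produces the expression on the right-hand side of \eqref{eq:Yj_pregen}.

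Finally, the Markov and time-homogeneity properties follow from the observation that every conditional law produced above depends on the history up to step $j$ only through $(H^n_k,|\mc{U}_k|)$, which is a function of $X^n_j=(y,z)$; and moreover the common distribution of the $R_k$ and the uniform sampling mechanism do not depend on $k$ or $n$. The main obstacle, and the step I would handle most carefully, is the conditional-independence bookkeeping in the third step: one needs to argue that, given $X^n_j$, the pair $(I_0,I_1)$, the residual randomness in $\mc{H}^n_k\cap\mc{U}_{k-1}$ beyond its size, the variable $R_k$, and the sampling $\v{p}_{k,\cdot}$ are all independent in the appropriate sense, so that their contributions combine to give precisely the transition displayed in \eqref{eq:Yj_pregen}. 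This rests on the fact that the sampling at time $k$ was carried out strictly before (in construction order) any of the sampling used to build $\mc{H}^n_k$.
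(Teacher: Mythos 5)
Your proposal is correct and follows essentially the same route as the paper: it analyses the reverse-time transition $\mc{H}^n_k\mapsto\mc{H}^n_{k-1}$ (removal of $\v{s}_{k},\v{c}_{k}$ when present, addition of the potential parents of $\v{c}_{k}$ on the event $\v{c}_{k}\in\v{c}_n^\downarrow$, which is indeed equivalent to $\v{c}_{k}\in\mc{H}^n_k$), obtains the joint law of $(I_0,I_1)$ from $(\dagger\dagger)$ and exchangeability, identifies the newly added parents as a $\mc{B}$-variable using that the time-$k$ sampling is independent of everything built from times $\geq k+1$, and concludes time-homogeneity and the Markov property exactly as the paper does. Your marked set $\mc{H}^n_k\cap\mc{U}_{k-1}$ (of size $H^n_k-I_0-I_1$ rather than $y/z$) is if anything slightly more careful than the third argument displayed in \eqref{eq:Yj_pregen}, but this is the same harmless idealization the paper itself makes, so there is no substantive discrepancy.
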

\begin{proof}
Consider the one-step transition of $X^n_j$ to $X^n_{j+1}$
where $j=Cn^\beta-k$.
Suppose that $X^n_j$ is at location $(y,z)\in[0,1]^2$, so that
$$y=\frac{H^n_{k}}{|\mc{U}_{k}|},\quad\quad z=\frac{1}{|\mc{U}_{k}|}.$$
In the $z$ coordinate of $X^n_\cdot$, we see the deterministic movement $Z^n_{j}\mapsto Z^n_{j+1}$, which is simply $z\mapsto z'$.
To understand the random movement within the $y$ coordinate, we must first consider the transition $\mc{H}^n_k\mapsto\mc{H}^n_{k-1}$.
To construct $\mc{H}^n_{k-1}$ from $\mc{H}^n_{k}$ we must do both of:
\begin{enumerate}
\item check if $\v{s}_{k}\in \mc{H}^n_{k}$; if it is then we must remove $\v{s}_{k}$,
\item check if $\v{c}_{k}\in \mc{H}^n_{k}$; if it is then we must remove $\v{c}_{k}$ and add in any potential parents of $\v{c}_{k}$ that were not already there.
\end{enumerate}
By $(\dagger\dagger)$, the events $\v{s}_{k}\in \mc{H}^n_{k}$ and $\v{c}_{k}\in \mc{H}^n_{k}$ are not independent.
However, by $(\dagger\dagger)$ and using exchangeability we do have
\begin{align}
\P\l[\v{s}_{k}\in \mc{H}^n_{k}| H^n_{k}\r]=\P\l[\v{c}_{k}\in \mc{H}^n_{k}| H^n_{k}\r]
=\frac{H^n_{k}}{|\mc{U}_{k}|}
&=y\label{eq:IcIs1}\\
\P\l[\v{s}_{k}\in \mc{H}^n_{k}, \v{c}_{k}\in \mc{H}^n_{k}| H^n_{k}\r]
=\frac{H^n_{k}}{|\mc{U}_{k}|}\frac{H^n_{k}-1}{|\mc{U}_{k}|-1}
&=\frac{y^2-yz}{1-z}.\label{eq:IcIs2}
\end{align}
Let $(I_0,I_1)$ denote a pair of correlated Bernoulli random variables, taking values in $\{0,1\}^2$, with the distribution of $\l(\1_{\l(\v{s}_{k}\in \mc{H}^n_{k}\r)},\1_{\l(\v{c}_{k}\in \mc{H}^n_{k}\r)}\r)$ given $H^n_{k}$, that is
$$\P[I_0=1]=\P[I_1=1]=y,\quad\quad\P[I_\v{c}=1,I_\v{c}=1]=\frac{y^2-yz}{1-z}.$$
We claim that, on the event $\{\v{c}_k\in\v{c}^\downarrow_n\}$, the number of potential parents of $\v{c}_k$ that are in $\mc{H}^n_{k}\sc\mc{H}^n_{k-1}$ has the same distribution as an independent copy of
$\mc{B}_{R_k,1/z', y/z}$.
To see this, note that $\mc{H}^n_{k-1}=y/z$ and $|\mc{U}_{k-1}|=1/z'$. 
The claim now follows from $(\dagger\dagger)$ and the definition of $\mc{B}_{r,a,b}$.

Note that we took $j=Cn^\beta-k$, so the transition $Y^n_j\mapsto Y^n_{j+1}$ is precisely the transition of
$Z^n_jH^n_{k}\mapsto Z^n_{j+1}H^n_{k-1}$.
Putting this together with all the above, we obtain that, conditionally given $X^n_j=(y,z)$, the transition $X^n_{j}\mapsto X^n_{j+1}$ has the same law as
\begin{equation*}
(y,z)\mapsto
\l(y\tfrac{z'}{z}-z'I_0+z'I_1\l(-1+\mc{B}_{R_k,1/z',y/z}\r),\;z'\r),
\end{equation*}
as required.
Conditionally given the value of $(y,z)$, the law of this transition does not depend on time (i.e.~on $j$ or $k$). Thus $(X^n_j)$ is a time-homogeneous process.
Note that $I_0,I_1$ are all measurable with respect to $\sigma(X^n)$, and that the distribution of $(I_0,I_1)$ depends only on the current state $X^n_j=(y,z)$. 

Hence, $(X^n_j)$ is Markov with respect to the filtration generated by its own motion, the i.i.d.~sequence $(R_{Cn^\beta},R_{Cn^\beta-1},\ldots,R_1)$, and the i.i.d.~random functions $\{\mc{B}_{\cdot,i,i'}\-i'\leq i\}$. (Strictly, we must include a new independent copy of the latter on each time-step.)
\end{proof}

In order to take a fluid limit, it will be convenient to work in continuous time.
Essentially this just means embedding $X^n_j$ as a the jump chain of a continuous time Markov process,
but we also need to handle one more technicality:
our fluid limit only exists around the time of the critical window $[cn^\beta,Cn^\beta]$.
Consequently, once the critical window of time has passed, it is convenient to make the pre-limiting processes behave exactly like their limit. Note that the critical window of time ends when $Z^n_j$ rises above $\frac{1}{2cn^\beta+2}$.
So, we define a continuous time Markov process $\mc{X}^{n}_t$, taking values in $[0,1]^2$, as follows. We write $\mc{X}^{n}_t=(\mc{Y}^n_t,\mc{Z}^n_t)$.
\begin{itemize}
\item Whilst $\mc{Z}^n_t<\frac{1}{2cn^\beta+2}$, the process $\mc{X}^{n}_t$ evolves with its jump chain having the same dynamics as $(Y^{n}_j,Z^n_j)$. The holding time of step $j\mapsto j+1$ is exponential with mean $Z^n_j$.
\item If $\mc{Z}^n_t\geq \frac{1}{2cn^\beta+2}$, then $\mc{Z}^n_t$ remains constant and $\mc{Y}^n_t$ evolves deterministically according to \eqref{eq:stablization_ODE}.
\end{itemize}
We have not yet chosen an initial state for $\mc{X}^n_0$ (and we will not do so, yet). However, the following lemma is immediate:

\begin{lemma}\label{lem:XXcouple}
Suppose that $X^n_0=\mc{X}^n_0$. Let $(E^n_j)$ be a sequence of independent random variables with distribution $E_j\sim Exp(1/z^n_j)$, and define
$T^n_j=\sum_{l=1}^j E^n_j.$
Then, there exists a coupling between $(E^n_j)$, $(X^n_j)$ and $(\mc{X}^n_t)$, such that $X^n_j=\mc{X}^n_{T^n_j}$ for all $j$.
\end{lemma}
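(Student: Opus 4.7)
The plan is to build $\mc{X}^n_{\cdot}$ directly out of $(X^n_j)$ and $(E^n_j)$ using the standard jump-chain representation of a continuous-time Markov chain. The key enabling observation is that $Z^n_j=\frac{1}{2(Cn^\beta-j)+2}$ is a \emph{deterministic} function of $j$, so the holding-time distribution at the $j$-th state of $\mc{X}^n_\cdot$ depends only on $j$ (equivalently, only on the $z$-coordinate of the current state, which is itself a deterministic function of $j$). This is exactly what permits the $(E^n_j)$ to be taken independent of the discrete chain $(X^n_j)$, which is the setup granted in the hypothesis.

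Concretely, I would place $(X^n_j)$ and $(E^n_j)$ on a common probability space with the required independent marginals and $X^n_0=\mc{X}^n_0$, set $T^n_j=\sum_{l=1}^j E^n_l$, and then \emph{define} $\mc{X}^n_t$ by the c\`adl\`ag prescription
\begin{equation*}
\mc{X}^n_t = X^n_j \quad \text{for } t\in[T^n_j,T^n_{j+1}),\ j<(C-c)n^\beta,
\end{equation*}
and, for $t\geq T^n_{(C-c)n^\beta}$, freeze $\mc{Z}^n_t$ at $\frac{1}{2cn^\beta+2}$ and let $\mc{Y}^n_t$ evolve deterministically from $Y^n_{(C-c)n^\beta}$ according to \eqref{eq:stablization_ODE}. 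The identity $X^n_j=\mc{X}^n_{T^n_j}$ is then literally automatic from the construction.

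All that remains is to check that the $\mc{X}^n_\cdot$ so built carries the law described immediately before the lemma. Its jump chain is, by construction, the time-homogeneous Markov chain of Lemma \ref{lem:Xnj_markov}; its $j$-th holding time is $E^n_j\sim \Exp(1/Z^n_j)$, with mean $Z^n_j$, which matches the stipulated holding-time distribution since $Z^n_j$ is a function of the current $z$-coordinate of $\mc{X}^n_\cdot$; and after the critical window closes the process evolves deterministically via \eqref{eq:stablization_ODE}, again as required. Since a time-homogeneous continuous-time Markov process is determined by its jump chain together with the state-dependent holding-time distributions, this identifies the law of $\mc{X}^n_\cdot$. I do not expect a genuine obstacle here: the proof is essentially the standard construction of a continuous-time Markov chain from an independent jump chain and an independent collection of exponential inter-jump times, made available in our setting precisely because the holding-time means are deterministic functions of the jump index.
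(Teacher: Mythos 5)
Your construction is correct and is exactly the reasoning the paper has in mind: the paper states the lemma without proof ("the following lemma is immediate"), precisely because, as you observe, the holding-time means $Z^n_j$ are deterministic functions of the jump index, so the standard jump-chain-plus-independent-exponential-holding-times construction realises $\mc{X}^n_\cdot$ on the same space as $(X^n_j)$ with $X^n_j=\mc{X}^n_{T^n_j}$ by definition. Nothing further is needed.
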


Let $\mc{X}_t=(\mc{Y}_t,\mc{Z}_t)$ be the time-homogeneous Markov process taking values in $[0,1]^2$ in which the first coordinate evolves according to \eqref{eq:stablization_ODE}, and the second coordinate stays constant.
It is easily seen that both $\mc{X}^n_t$ and $\mc{X}_t$ are time-homogeneous strongly Markov processes.
We now begin a sequence of lemmas which will lead us to the proof of Proposition \ref{prop:stablization_bounds}.

We will establish in Lemmas \ref{lem:X_tightness} and \ref{lem:Xt_conv} that $\mc{X}^n_t$ converges weakly to $\mc{X}_t$, from which
it follows immediately that $\mc{Y}^n_t$ converges weakly to $\mc{Y}_t$.
We must then work back to deduce a corresponding result about $Y^n_j$ and the solution to \eqref{eq:stablization_ODE}.
Let us now outline the strategy for doing so.

Let $\mathscr{D}(E)$ denote the Skorohod space of {\cadlag} paths mapping $[0,\infty)\to E$.
We know exactly how $\mc{Y}_t$ behaves; it follows the ODE \eqref{eq:stablization_ODE} which has the explicit solution \eqref{eq:ytA}.
The content of Proposition \ref{prop:stablization_bounds} is essentially that $Y^n_{s(C-c)n^\beta}$, where $t(s)=\log(\frac{C}{C-s(C-c)})$, behaves in approximately the same way as $\mc{Y}_t$ (and this leads to the lower bound).
In order to deduce this from weak convergence we need to use the Portmanteau theorem, which requires that a suitable subset of the Skorohod space $\mathscr{D}([0,1]^2)$ is open; this appears as Lemma \ref{lem:portmanteau_prep}.
We also need to control the time change $t(s)$ and handle the fact that $\mc{Y}_t$ has continuous time but $Y^n_j$ has discrete time; this is Lemma \ref{lem:ts}.
Lastly, our information concerning the initial condition $Y^n_0=\mc{Y}^n_0$ is rather weak. Lemma \ref{lem:br_end_H} provides only asymptotic bounds on its expectation. In the final step of the proof of Proposition \ref{prop:stablization_bounds}, which comes at the end of this section, we employ an artificial trick to regain some control over this initial condition.

\begin{lemma}\label{lem:X_tightness}
The sequence of processes $(\mc{X}^n)$ is tight in $\mathscr{D}([0,1]^2)$.
\end{lemma}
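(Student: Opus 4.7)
The state space $[0,1]^2$ is compact, so each marginal law $\P\circ(\mc{X}^n_t)^{-1}$ is trivially tight. The plan is to apply Aldous' tightness criterion (e.g.~Theorem~16.10 of Billingsley's \emph{Convergence of Probability Measures}), which reduces the claim to showing that for every $T>0$ and $\epsilon>0$,
$$\lim_{\delta\downarrow 0}\limsup_{n\to\infty}\sup_{\tau\leq T,\ h\leq\delta}\P\bigl[|\mc{X}^n_{\tau+h}-\mc{X}^n_\tau|\geq\epsilon\bigr]=0,$$
where $\tau$ ranges over stopping times in the natural filtration of $\mc{X}^n$. By the strong Markov property and Markov's inequality, it is enough to bound $\E[|\mc{X}^n_{\tau+h}-\mc{X}^n_\tau|]$ uniformly in $n$ and $\tau$ by $O(\delta)$.

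The main technical ingredient is an estimate on the expected cumulative variation of $\mc{X}^n$ over a short time interval. From Lemma~\ref{lem:Xnj_markov}, using $I_0,I_1\in\{0,1\}$, $\mc{B}_{R,\cdot,\cdot}\leq R$, and the identity $z'/z = 1+O(z)$, every jump of $\mc{X}^n$ satisfies $|\Delta\mc{X}^n|\leq K z'(R+1)$ for an independent copy $R$ of the offspring variable and some absolute constant $K$. Since the holding time preceding such a jump is exponential with mean $z$, the instantaneous rate of expected absolute variation equals $K(\E[R]+1)\,z'/z = K(\E[R]+1)\bigl(1+O(z)\bigr)$. Integrating over a time interval of length $h$ in the stochastic regime $\mc{Z}^n<(2cn^\beta+2)^{-1}$, the expected total variation of $\mc{X}^n$ is at most $K(\E[R]+1)\,h\,\bigl(1+O(c^{-1}n^{-\beta})\bigr)$, which is $O(h)$ uniformly in $n$.

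In the deterministic regime $\mc{Z}^n\geq(2cn^\beta+2)^{-1}$, the ODE~\eqref{eq:stablization_ODE} has right-hand side bounded by $2\zeta$ on $[0,1]$, so $\mc{X}^n$ is $2\zeta$-Lipschitz in time thereafter. Combining the two regimes gives $\E[|\mc{X}^n_{\tau+h}-\mc{X}^n_\tau|]\leq K'\delta$ uniformly in $n$ and $\tau\leq T$, which closes the Aldous bound via Markov's inequality.

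The main obstacle is that, as $n\to\infty$, jumps of $\mc{X}^n$ occur at unbounded rate of order $n^\beta$, so a naive count of jumps in a short interval is useless for tightness. The crux of the argument is to control expected \emph{total variation} rather than jump counts, exploiting the fact that each jump has size proportional to $z$ and thus cancels the diverging rate $1/z$; the hypothesis $\E[R]<\infty$ enters here in an essential way.
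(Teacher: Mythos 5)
Your argument is correct and rests on the same two pillars as the paper's proof — Aldous' criterion plus the cancellation of jump sizes of order $z$ against the jump rate of order $1/z$ — but the implementation is genuinely different. The paper first reduces to real-valued processes by applying Corollary 3.9.1 of \cite{EthierKurtz1986} to Lipschitz test functions of $(\mc{Y}^n,\mc{Z}^n)$, and then verifies Aldous' condition pathwise: the cumulative jump magnitude over a short window is stochastically dominated by a Poisson process with deterministic jump size $\wt{c}n^{-\beta}$ and rate $2Cn^\beta+2$, and the number of jumps in time $\theta$ is controlled by an exponential tail bound for sums of exponential variables (Theorem 5.1 of \cite{Janson2018}). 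You instead work directly with the $[0,1]^2$-valued process and bound the \emph{expected} total variation over $[\tau,\tau+h]$ by $(\text{jump rate})\times(\text{mean jump size})\times h=\mc{O}(h)$, finishing with Markov's inequality. Your first-moment route is more elementary (no tail bounds, no reduction to scalar processes) and in one respect more robust: since $R$ is unbounded, individual jumps of $\mc{Y}^n$ are of size $\mc{O}(z'(R+1))$ rather than uniformly $\mc{O}(n^{-\beta})$, and your bound invokes $\E[R]<\infty$ exactly where it is needed, whereas the paper's domination by a fixed-jump-size Poisson process tacitly uses a uniform jump bound (it would be repaired by dominating with a compound Poisson process with jumps proportional to $(R+1)n^{-\beta}$, or by your moment argument). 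What the paper's route buys is that, by testing against Lipschitz $f$, it never has to worry about which multivariate form of Aldous' criterion applies, and the high-probability control of the jump count is reused in spirit elsewhere. Two points to tighten in a final write-up: justify ``integrating the instantaneous rate of expected variation'' via the compensator of the increasing variation process (Dynkin's formula or optional stopping), using time-homogeneity and the strong Markov property at $\tau$ to get uniformity over stopping times; and cite a version of Aldous' criterion valid for a compact (or Polish) state space, e.g.\ Theorem 3.8.6 of \cite{EthierKurtz1986} with the compact containment condition being automatic here, since Billingsley's Theorem 16.10 is stated for real-valued paths.
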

\begin{proof}
By Corollary 3.9.1 of \cite{EthierKurtz1986}, it suffices to check that $(f(\mc{Y}^n_\cdot,\mc{X}^n_\cdot))$ is tight in $\mathscr{D}(\R)$, for every Lipschitz $f:[0,1]^2\to\R$. Let us fix such an $f$ and write $\mc{W}^n_t=f(\mc{Y}^n_t,\mc{Z}^n_t)$.
We will use a standard criterion of \cite{Aldous1978} to check tightness of $(\mc{W}^n)$. To this end, for each $n\in\N$ let $\tau_n$ be a stopping time, with respect to the generated filtration of $\mc{W}^n$. Note that since $\sigma(\mc{W}^n_t)\sw\sigma(\mc{X}^n_t)$, it follows immediately that $\tau_n$ is also a stopping time with respect to the filtration generated by $\mc{X}^n$. We use the latter filtration for the remainder of this proof.
The criterion of \cite{Aldous1978} requires us to show that for all $\epsilon>0$ there exist $\theta>0$ and $N\in\N$ such that for all $n\geq N$ and $s\in(0,\theta)$,
\begin{equation}\label{eq:tightness}
\P\l[|\mc{W}^n_{\tau_n+s}-\mc{W}^n_{\tau_n}|\geq \epsilon\r]\leq\epsilon.
\end{equation}
We will spend the remainder of the proof establishing that this equation holds.
Let $||f||_{Lip}$ denote the Lipshitz constant of $f$ with respect to the $L^\infty$ norm on $[0,1]^2$.
Thus,
\begin{align}\label{eq:tightness_YZ}
|\mc{W}^n_{\tau_n+s}-\mc{W}^n_{\tau_n}|\leq ||f||_{Lip}\l(|\mc{Z}^n_{\tau_n+s}-\mc{Z}^n_{\tau_n}|+|\mc{Y}^n_{\tau_n+s}-\mc{Y}^n_{\tau_n}|\r).
\end{align}
For $t$ such that $t\geq\inf\{u\-\mc{Z}^n_u\geq\frac{1}{2cn^\beta}\}$, $\mc{Z}^n_t$ remains constant and $\mc{Y}^n_t\in[0,1]$ evolves smoothly and deterministically according to the ODE \eqref{eq:stablization_ODE}. Thus, using \eqref{eq:tightness_YZ} it is easily seen that \eqref{eq:tightness} holds during this region of time.

It remains to consider $t\leq \inf\{u\-\mc{Z}^n_u\geq\frac{1}{2cn^\beta}\}$. During this region of time $\mc{X}^n$ is a jump process and the rate at which $\mc{X}^n$ jumps is bounded above by $2Cn^\beta+2$.
At these jumps, the change in magnitude of $\mc{Z}^n$ is uniformly (over the jumps) $\mc{O}(n^{-2\beta})$ and the change in $\mc{Y}^n$ is uniformly $\mc{O}(n^{-\beta})$.
Thus there exists some constant $\wt{c}\in(0,\infty)$ such that both changes in magnitude are bounded above by $\wt{c}n^{-\beta}$. Thus, as time progresses, the sum of the magnitudes of the jumps is, in both cases, stochastically bounded above by a Poisson process $\mc{V}^n_t$ that makes upwards jumps of size $\wt{c}n^{-\beta}$ at rate $2Cn^\beta+2$.  Since the jumps of $\mc{Y}^n$ and $\mc{Z}^n$ occur at the same points in time, in fact we can use a single (coupled) copy of $\mc{V}^n$ to bound them both. Thus, from, \eqref{eq:tightness_YZ}
\begin{align}\label{eq:tightness_V}
|\mc{W}^n_{\tau_n+s}-\mc{W}^n_{\tau_n}|\leq 2||f||_{Lip}\,\mc{V}^n_{s}
\end{align}

Let $T^n$ denote the time taken for the first $4\theta Cn^\beta$ jumps made by $\mc{V}^n$ (rounded upwards).
From part (iii) of Theorem 5.1 of \cite{Janson2018},
which gives tail bounds on sums of exponential random variables,
\begin{align}
\P[T^n\leq \theta]
&\leq \exp\l(-(2Cn^\beta+2)(2\theta)(\tfrac12-1-\log(\tfrac12))\r)\notag\\
&\leq \exp\l(-\theta Cn^\beta (\log 16-2)\r).\label{eq:Vn_fast}
\end{align}
Moreover, note that on the event $T^n>\theta$, $\mc{V}^n$ makes at most $4\theta Cn^\beta$ jumps during time $[0,\theta]$, so noting that $\mc{V}^n$ is an increasing process we have
\begin{align}\label{eq:Vn_slow}
T^n>\theta \quad\ra\quad \sup_{s\in[0,\theta]}\mc{V}^n_{s}\leq \frac{4\theta Cn^\beta}{\wt{c}n^\beta}\sim\frac{4\theta C}{\wt{c}}.
\end{align}

Let $\epsilon>0$. Choose $\theta=\wt{c}\epsilon/(16C||f||_{Lip})$,
which implies that the right hand side of \eqref{eq:Vn_slow} is bounded above by $\epsilon/(4||f||_{Lip})$,
and thus from \eqref{eq:tightness_V} whenever $T^n>\theta$ we have $|\mc{W}^n_{\tau_n+s}-\mc{W}^n_{\tau_n}|\leq\epsilon/2$ for all $s\in[0,\theta]$.
Choose $N=\l(\epsilon \theta C(\log 16-2)\r)^{-1/\beta}$, which implies that
for all $n\geq N$ the right hand side of \eqref{eq:Vn_fast} is bounded above by $e^{-1/\epsilon}$ and hence also by $\epsilon$ itself. Thus, by conditioning on the event $\{T^n\leq\theta\}$ we obtain that for all $s\in[0,\theta]$ and $n\geq N$,
$
\P\l[|\mc{W}^n_{\tau_n+s}-\mc{W}^n_{\tau_n}|\geq \epsilon\r]\leq \epsilon.
$
This establishes \eqref{eq:tightness} and thus completes the proof.
\end{proof}

\begin{lemma}\label{lem:Xt_conv}
Suppose $\mc{X}^n_0$ converges in law to $\mc{X}_0$. Then $\mc{X}^{n}$ converges weakly to $\mc{X}$ in $\mathscr{D}([0,1]^2)$.
\end{lemma}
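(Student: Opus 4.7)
The plan is to combine the tightness from Lemma \ref{lem:X_tightness} with a generator convergence argument. By Prokhorov's theorem, tightness gives relative compactness in $\mathscr{D}([0,1]^2)$, so it suffices to show that every subsequential weak limit $\tilde{\mc{X}}$ of $(\mc{X}^n)$ has the law of $\mc{X}$ started from $\tilde{\mc{X}}_0 \stackrel{d}{=} \mc{X}_0$. I would identify $\tilde{\mc{X}}$ as the unique solution to the martingale problem for the generator $\mc{L} f(y,z) = 2\zeta\, y(1-y)\,\partial_y f(y,z)$.

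For each $f \in C^2([0,1]^2)$ the process $M^{n,f}_t = f(\mc{X}^n_t) - f(\mc{X}^n_0) - \int_0^t \mc{L}^n f(\mc{X}^n_s)\,ds$ is a martingale, where $\mc{L}^n$ is the extended generator of $\mc{X}^n$. While $\mc{Z}^n_t < (2cn^\beta+2)^{-1}$, Lemma \ref{lem:Xnj_markov} tells us that jumps occur at rate $1/z$ with the transition $(y,z) \mapsto (yz'/z - z' I_0 + z' I_1(-1 + \mc{B}_{R,1/z',y/z}),\,z')$; above this threshold $\mc{L}^n$ coincides with $\mc{L}$ by construction. The heart of the proof is a Taylor expansion in the small parameter $z = O(n^{-\beta})$. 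Using $\E[I_0] = \E[I_1] = y$, the conditional independence of $\mc{B}_{R,1/z',y/z}$ from $(I_0, I_1)$, and
$$\E\bigl[\mc{B}_{R,1/z',y/z} \,\big|\, R, y, z\bigr] = (1/z' - y/z)\bigl(1 - (1-z')^R\bigr),$$
I would expand both sides to leading order, invoking $\E[R] = 2\zeta$ and $\E[R^2] < \infty$ to obtain
$$\frac{1}{z}\E\bigl[\Delta\mc{Y}^n \,\big|\, (y,z)\bigr] = 2\zeta\, y(1-y) + O(z), \qquad \frac{1}{z}\E\bigl[(\Delta\mc{Y}^n)^2 \,\big|\, (y,z)\bigr] = O(z),$$
uniformly in $(y,z) \in [0,1]^2$. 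Since the $z$-coordinate contributes only an $O(z)$ rate of change per unit time, this yields $\mc{L}^n f \to \mc{L} f$ uniformly for every $f \in C^2([0,1]^2)$.

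Next I would pass to the limit along any weakly convergent subsequence, using Skorohod representation to place $\mc{X}^n \to \tilde{\mc{X}}$ on a common probability space with almost sure convergence in $\mathscr{D}([0,1]^2)$. Combined with boundedness of $f$ and $\mc{L} f$, dominated convergence lets me conclude that $\tilde M^f_t = f(\tilde{\mc{X}}_t) - f(\tilde{\mc{X}}_0) - \int_0^t \mc{L} f(\tilde{\mc{X}}_s)\,ds$ is a martingale for every $f \in C^2([0,1]^2)$, initially for continuity points of $t \mapsto \tilde{\mc{X}}_t$ and then for all $t$ by right-continuity. Since $\mc{L}$ corresponds to the Lipschitz ODE system $\dot y = 2\zeta y(1-y)$, $\dot z = 0$, which has unique solutions from every initial condition, the martingale problem for $\mc{L}$ admits a unique solution, so $\tilde{\mc{X}} \stackrel{d}{=} \mc{X}$. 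Every subsequential limit therefore coincides, yielding $\mc{X}^n \Rightarrow \mc{X}$.

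The main obstacle is the Taylor expansion of the generator: the bookkeeping of the correlated Bernoullis $(I_0, I_1)$ and of the allocation variable $\mc{B}_{R,1/z',y/z}$ needs to be carried out carefully enough to extract the precise coefficient $2\zeta\, y(1-y)$, while absorbing the remainder into an $O(z)$ term that vanishes uniformly on $[0,1]^2$. The hypothesis $\E[R^2] < \infty$ is essential here, as it controls $\E[(\Delta\mc{Y}^n)^2]$ so that the stochastic fluctuations vanish in the limit and only the deterministic drift survives.
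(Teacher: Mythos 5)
Your proposal is correct, and its analytic core coincides with the paper's: both arguments hinge on computing the generator of $\mc{X}^n$ from the one-step transition \eqref{eq:Yj_pregen}, Taylor-expanding in the small parameter $z=\mc{O}(n^{-\beta})$ on the region $z<\tfrac{1}{2cn^\beta+2}$ (above the threshold the generators agree by construction), and extracting the drift $2\zeta y(1-y)\frac{\p f}{\p 1}$ with a uniformly vanishing error, using $\E[R]=2\zeta$ and $\E[R^2]<\infty$ together with the mean of $\mc{B}_{r,1/z',y/z}$ (you use the exact formula $(1/z'-y/z)(1-(1-z')^r)$, the paper uses the two-sided bound \eqref{eq:Brab}; either suffices). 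Where you diverge is in how the convergence theorem is assembled: the paper combines the uniform generator convergence \eqref{eq:gen_control_uniform} with the tightness of Lemma \ref{lem:X_tightness} and simply invokes Theorem 4.8.10 of \cite{EthierKurtz1986}, whereas you re-derive that step by hand via Prokhorov, Skorohod representation, passage of the martingale property to subsequential limits, and well-posedness of the limiting martingale problem. Your route is more self-contained but carries extra obligations that you only sketch: you must justify that $M^{n,f}$ is a martingale for the (for fixed $n$, bounded-rate) jump-hold process, that the martingale property survives the limit despite possible discontinuities of the limit path (countably many, so harmless), and that the martingale problem for the first-order operator $\mc{Q}f=2\zeta y(1-y)\frac{\p f}{\p 1}$ is well-posed --- true because it generates the deterministic flow of the Lipschitz ODE \eqref{eq:stablization_ODE} with frozen second coordinate, but this deserves a line of proof (e.g.\ showing zero quadratic variation of the coordinate processes). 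The paper's citation buys brevity; your unpacking buys transparency at the cost of these routine verifications, and your second-moment bound $\frac1z\E[(\Delta\mc{Y}^n)^2\,|\,(y,z)]=\mc{O}(z)$ (which indeed requires $\E[R^2]<\infty$, in force throughout Section \ref{sec:extensive_condensation}) is exactly the ingredient that makes the remainder in the second-order Taylor expansion negligible.
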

\begin{proof}
First, we establish an elementary inequality relating to $\mc{B}_{r,a,b}$. 
Recall the definition of $\mc{B}_{r,a,b}$ (just above Lemma \ref{lem:Xnj_markov}) in terms of placing balls into marked and unmarked boxes.
Let $0<r\leq a-b$. We claim that 
\begin{equation}
\label{eq:Brab}
r(1-\frac{b}{a}-\frac{r}{a})\leq \E[\mc{B}_{r,a,b}]\leq r(1-\frac{b}{a}).
\end{equation}
To see \eqref{eq:Brab}, note that
we can bound $\mc{B}_{r,a,b}$ from above by counting the total number of balls placed into unmarked boxes; this is binomial with $r$ trials and success probability $\frac{a-b}{a}=1-\frac{b}{a}$.
We can bound $\mc{B}_{r,a,b}$ below by noting that at most $r$ unmarked boxes will be chosen in total, so if we place the $r$ balls in turn, then each time we place a ball the chance of it being placed into an (as yet) unoccupied unmarked box is at least $\frac{a-b-r}{a}=1-\frac{b}{a}-\frac{r}{a}$; hence $\mc{B}_{r,a,b}$ is stochastically bounded below by a binomial with $r$ trials and success probability $1-\frac{b}{a}-\frac{r}{a}$. Thus \eqref{eq:Brab} holds.

Now, we will show weak convergence of $\mc{X}^{n}$ to $\mc{X}$.
The argument rests on applying Theorem 4.8.10 of \cite{EthierKurtz1986}, which requires us to establish that the Markov generators $\mc{Q}_n$, of $\mc{X}^{n}$, and $\mc{Q}$, of $\mc{X}$, are close, in a suitable sense.
We will denote partial derivatives of $f$ with respect to its first and second coordinate as $\frac{\p f}{\p 1}$ and $\frac{\p f}{\p 2}$ respectively.
We take the domain of $\mc{Q}_n$ to be the set of real valued continuously differentiable functions on $[0,1]^2$, and note that the generator of $\mc{X}_t$, with this same domain, is
\begin{equation}\label{eq:G_def}
\mc{Q}f(y,z)=2\zeta y(1-y)\frac{\p f}{\p 1}(y,z).
\end{equation}
Using \eqref{eq:Yj_pregen} from Lemma \ref{lem:Xnj_markov} and recalling the definition of $\mc{X}^n_t$, the generator of $\mc{X}^{n}_t$ is
\begin{align}
\mc{Q}_nf(y,z)
&=\1\l\{z\geq \tfrac{1}{2cn^\beta+2}\r\}2\zeta y(1-y)\frac{\p f}{\p 1}(y,z)\label{eq:z_uniform_0}\\
&+\1\l\{z<\tfrac{1}{2cn^\beta+2}\r\}\frac{1}{z}\Bigg(\mc{O}(z^2)\label{eq:z_uniform_1}\\
&\hspace{2pc}+(1-y)^2\l[f(y\tfrac{z'}{z},z')-f(y,z)\r]
+y(1-y)\l[f(y\tfrac{z'}{z}-z',z')-f(y,z)\r]\label{eq:Gn_def_0}\\
&\hspace{2pc}+y(1-y)\sum\limits_{r=1}^\infty\P[R=r]\sum\limits_{b=0}^r\P\l[\mc{B}_{r,1/z',y/z}=b\r]\l[f(y\tfrac{z'}{z}+z'(b-1),z')-f(y,z)\r]\notag\\
&\hspace{2pc}+y^2\sum\limits_{r=1}^\infty\P[R=r]\sum\limits_{b=0}^r\P\l[\mc{B}_{r,1/z',y/z}=b\r]\l[f(y\tfrac{z'}{z}+z'(b-2),z')-f(y,z)\r]\Bigg)
\label{eq:Gn_def}
\end{align}
Here, the $\mc{O}(z^2)$ term has subsumed an $\mc{O}(z)$ term coming from $\P[I_0=I_1=1]=\frac{y^2-yz}{1-z}=y^2+\mc{O}(z)$; noting that by Taylor's theorem
$f(y\tfrac{z'}{z}+\mc{O}(z'))-f(y,z)=\mc{O}(z),$
so that after multiplication in the above only the $y^2$ part is non-negligible.

We will show that
\begin{equation}\label{eq:gen_control_uniform}
\sup_{y,z\in[0,1]}\l|\mc{Q}_nf(y,z)-\mc{Q}f(y,z)\r|\to 0
\end{equation}
as $n\to\infty$. With this equation and Lemma \ref{lem:X_tightness} in hand it is straightforward to see that Theorem 4.8.10 of \cite{EthierKurtz1986} applies, with the desired conclusion.
We now give the proof of \eqref{eq:gen_control_uniform}. We begin by examining the two terms in \eqref{eq:Gn_def_0}. Take $y\in[0,1]$ and $z\in[0,\frac{1}{2cn^\beta+2})$. For such $z$, we have $z=\mc{O}(n^{-\beta})$ and
\begin{align}
\frac{z'-z}{z}&=\frac{2z}{1-2z}=\mc{O}(n^{-\beta}),\label{eq:z_control_1}\\
\frac{z'-z}{z^2}&=\frac{2}{1-2z}=2+\mc{O}(n^{-\beta}),\label{eq:z_control_2}
\end{align}
uniformly in such $z$.
From Taylor's theorem, and then using \eqref{eq:z_control_1} and \eqref{eq:z_control_2} we have
\begin{align}
\frac{1}{z}(1-y)^2\l[f(y\tfrac{z'}{z},z')-f(y,z)\r]
&=\frac{1}{z}(1-y)^2\l\{\frac{\p f}{\p 1}(y,z)\l[y\frac{z'}{z}-y\r]+\frac{\p f}{\p 2}(y,z)\l[z'-z\r]\r\}\notag\\
&=(1-y)^2(2y)\frac{\p f}{\p 1}(y,z)+\mc{O}(n^{-\beta})\label{eq:gen_inter_1}.
\end{align}
Similarly,
\begin{align}
\frac{1}{z}y(1-y)\l[f(y\tfrac{z'}{z}-z',z')-f(y,z)\r]
&=y(1-y)(2y-1)\frac{\p f}{\p 1}(y,z)+\mc{O}(n^{-\beta})\label{eq:gen_inter_2}.
\end{align}
Similarly again,
\begin{align}
&\frac{1}{z}y(1-y)\sum\limits_{r=1}^\infty\P[R=r]\sum\limits_{b=0}^r\P\l[\mc{B}_{r,1/z',y/z}=b\r]
\l(f(y\tfrac{z'}{z}+z'(b-1),z')-f(y,z)\r)\notag\\
=&\,\frac{1}{z}y(1-y)\sum\limits_{r=1}^\infty\P[R=r]\sum\limits_{b=0}^r\P\l[\mc{B}_{r,1/z',y/z}=b\r]
\l(\frac{\p f}{\p 1}(y,z)\l[y\frac{z'}{z}+z'(b-1)-y\r]+\frac{\p f}{\p 2}(y,z)\l[z'-z\r]\r)\notag\\
=&\,y(1-y)\frac{\p f}{\p 1}(y,z)\sum\limits_{r=1}^\infty\P[R=r]\sum\limits_{b=0}^r\P\l[\mc{B}_{r,1/z',y/z}=b\r]\l(2y+b-1\r)+\mc{O}(n^{-\beta})\notag\\
=&\,y(1-y)\frac{\p f}{\p 1}(y,z)\sum\limits_{r=1}^\infty\P[R=r]\l(2y+r(1-y)+\mc{O}(rz')-1\r)+\mc{O}(n^{-\beta})\notag\\
=&\,\,y(1-y)\big(2y+2\zeta(1-y)-1\big)\frac{\p f}{\p 1}(y,z)+\mc{O}(n^{-\beta}).\label{eq:gen_inter_3}
\end{align}
Here, to deduce the fourth line from third,
we use \eqref{eq:Brab} along with \eqref{eq:z_control_1} to give that
$\E[\mc{B}_{r,1/z',y/z}]=r(1-y)+\mc{O}(rz')+\mc{O}(n^{-\beta})$.
Then, to deduce the final line we note that $\sum_{r=1}^\infty \P[R=r]\mc{O}(rz')=\mc{O}(\E[R]z')=\mc{O}(n^{-\beta})$ and that $\zeta=\frac12\E[R]$.

Finally, using much the same calculations as in \eqref{eq:gen_inter_3} we obtain
\begin{align}
&\frac{1}{z}y^2\sum\limits_{r=1}^\infty\P[R=r]\sum\limits_{b=0}^r\P\l[\mc{B}_{r,1/z',y/z}=b\r]
\l(f(y\tfrac{z'}{z}+z'(b-2),z')-f(y,z)\r)\notag\\
=&\,\,y^2\big(2y+2\zeta(1-y)-2\big)\frac{\p f}{\p 1}(y,z)+\mc{O}(n^{-\beta}).\label{eq:gen_inter_4}
\end{align}

Putting \eqref{eq:gen_inter_1}, \eqref{eq:gen_inter_2}, \eqref{eq:gen_inter_3} and \eqref{eq:gen_inter_4} into \eqref{eq:Gn_def}, after a brief calculation (in which the terms containing $y^3$ cancel each other out) we obtain that
\begin{align}
\mc{Q}_nf(y,z)
&=\mc{Q}f(y,z)+\mc{O}(n^{-\beta})\label{eq:Gn_approx}
\end{align}
It is straightforward to check that the $\mc{O}(n^{-\beta})$ in \eqref{eq:Gn_approx} is uniform over $y,z\in[0,1]$.
This comes from the presence of the indicators in \eqref{eq:z_uniform_0} and \eqref{eq:z_uniform_1}, which ensure that
whenever $\mc{Q}_n(y,z)$ and $\mc{Q}(y,z)$ differ we must have $z\in[0,\frac{1}{2cn^\beta+2})$, and the fact that our calculations
above only contain non-negative powers of $y\in[0,1]$.
Thus, equation \eqref{eq:gen_control_uniform} follows immediately, which completes the proof.
\end{proof}

Let $t\mapsto y(t;A)$ denote the (unique) solution to \eqref{eq:stablization_ODE} subject to the condition $y(0)=A\in[0,1]$. That is,
\begin{equation}\label{eq:ytA}
y(t;A)=\frac{A}{A-(A-1)e^{-2\zeta t}}.
\end{equation}
Note that $y(t;\cdot)$ has fixed points at $A=0$ and $A=1$; the former is unstable and the latter is stable.
Given $A\in(0,1)$, the map $t\mapsto y(t;A)$ is a strictly increasing function of $t$, with $y(t;A)\to 1$ as $t\to\infty$ and $y(t;A)\to 0$ as $t\to-\infty$. Moreover, noting that $0\leq y'(t)\leq 4\zeta$, it is easily seen that for $A,B\in(0,1)$,
\begin{equation}\label{eq:y_supcont}
\lim\limits_{B\to A}\sup_{t\in\R}\l|y(t;B)-y(t;A)\r|=0.
\end{equation}
Given a function $f\in \mathscr{D}([0,1])$ we set $y^\delta_f(t)=y(t-\delta;f(0))$.

\begin{lemma}\label{lem:portmanteau_prep}
For all $\delta>0$, the following set is an open subset of $\mathscr{D}([0,1])$:
$$D_\delta=\l\{f\in\mathscr{D}([0,1])\-f(0)\in(0,1)\text{ and }\inf_{t\in[0,\infty)}f(t)-y^\delta_f(t)>0\r\}$$
\end{lemma}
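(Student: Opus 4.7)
The plan is to show $D_\delta$ is open by verifying that if $f\in D_\delta$ and $f_n\to f$ in the Skorohod $J_1$ topology on $\mathscr{D}([0,1])$, then $f_n\in D_\delta$ for all sufficiently large $n$. The two defining conditions are handled separately.

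The initial condition $f(0)\in(0,1)$ is essentially immediate: admissible time changes in the Skorohod topology fix the origin, so $f\mapsto f(0)$ is continuous on $\mathscr{D}([0,1])$, and since $(0,1)$ is open in $[0,1]$ we obtain $f_n(0)\in(0,1)$ for all large $n$. As a by-product, \eqref{eq:y_supcont} combined with $f_n(0)\to f(0)$ upgrades this to the global uniform convergence $\|y^\delta_{f_n}-y^\delta_f\|_\infty\to 0$ on $\R$, which is the key ingredient for the second condition.

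For the dominance condition, set $\gamma:=\inf_{t\geq 0}(f(t)-y^\delta_f(t))>0$ and aim to show $\inf_{t\geq 0}(f_n(t)-y^\delta_{f_n}(t))\geq \gamma/2$ for all large $n$. The central trick is that the infimum is invariant under reparametrization of time: for any increasing bijection $\lambda:[0,\infty)\to[0,\infty)$ and any function $h$, $\inf_t h(t)=\inf_t h(\lambda(t))$. Using the definition of Skorohod convergence, pick increasing bijections $\lambda_n$ with $\lambda_n(0)=0$ such that, on every compact $[0,T]$, both $\sup_t|\lambda_n(t)-t|\to 0$ and $\sup_t|f_n(\lambda_n(t))-f(t)|\to 0$. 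Writing $g_n=f_n-y^\delta_{f_n}$ and $g=f-y^\delta_f$, bijectivity gives $\inf_t g_n(t)=\inf_t g_n(\lambda_n(t))$, with
\[ g_n(\lambda_n(t))-g(t)=\bigl[f_n(\lambda_n(t))-f(t)\bigr]-\bigl[y^\delta_{f_n}(\lambda_n(t))-y^\delta_f(t)\bigr]. \]
The first bracket tends to $0$ uniformly on compacts by Skorohod convergence; the second is dominated by $\|y^\delta_{f_n}-y^\delta_f\|_\infty+L_\delta|\lambda_n(t)-t|$, where $L_\delta=\zeta/2$ is a global Lipschitz constant for $y^\delta_f$ inherited from the bound $\max_y|2\zeta y(1-y)|\leq \zeta/2$ on the ODE's velocity. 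Both summands tend to $0$, the first globally and the second uniformly on compacts.

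The main obstacle, and the reason the argument is not a one-liner, is that the infimum ranges over the unbounded interval $[0,\infty)$ whereas Skorohod convergence is only local. I expect to close this gap by a standard localization/subsequence argument: pick $(t_n)$ with $g_n(\lambda_n(t_n))$ within $1/n$ of $\inf_t g_n(\lambda_n(t))$; if a subsequence $t_{n_k}$ remains bounded and converges to some $t^*$, then the compact-interval estimates force $g_{n_k}(\lambda_{n_k}(t_{n_k}))\to g(t^*\pm)\geq \gamma$, using that both left and right limits of $g$ at $t^*$ are bounded below by $\gamma=\inf_t g$; if instead $t_n\to\infty$, one exploits $\liminf_{t\to\infty} g(t)\geq \gamma$ (immediate from $\inf_t g\geq\gamma$) together with the global uniform bound on $y^\delta_{f_n}-y^\delta_f$ and the $[0,1]$-boundedness of the $f_n$'s to reach the same lower bound. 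In either case, $\liminf_n \inf_t g_n(t)\geq \gamma$, which suffices.
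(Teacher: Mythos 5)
Your skeleton is the same as the paper's: continuity of $f\mapsto f(0)$ under Skorohod convergence, \eqref{eq:y_supcont} to get $\sup_t|y^\delta_{f_n}(t)-y^\delta_f(t)|\to 0$, and then a comparison of $g_n=f_n-y^\delta_{f_n}$ with $g=f-y^\delta_f$ (the paper phrases this as closedness of the complement and simply asserts that $\inf_t g_n\le 0<\epsilon\le\inf_t g$ forces $d_{Sk}(g_n,g)\ge\epsilon$). Your compact-time estimates (time-change invariance of the infimum, the global bound from \eqref{eq:y_supcont}, the Lipschitz constant $\zeta/2$ for solutions of \eqref{eq:stablization_ODE}) are fine and are essentially the paper's argument with the bookkeeping made explicit. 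The genuine gap is the branch you yourself flag as the crux, $t_n\to\infty$. Skorohod convergence on $[0,\infty)$ controls $f_n\circ\lambda_n-f$ only on compact time windows, so knowing $\liminf_{t\to\infty}g(t)\ge\gamma$ says nothing about $g_n$ at late times: take $f_n=f$ on $[0,n]$ and $f_n\equiv 0$ on $(n,\infty)$; then $f_n\to f$ in $\mathscr{D}([0,1])$, yet $\inf_t g_n(t)\le -y^\delta_{f_n}(n+1)<0$, so no argument from your hypotheses can yield $\liminf_n\inf_t g_n\ge\gamma$. Worse, the bound you claim at infinity is impossible in principle: since $f_n\le 1$ and $y^\delta_{f_n}(t)=y(t-\delta;f_n(0))\to 1$ whenever $f_n(0)\in(0,1)$, one has $g_n(t)\le 1-y^\delta_{f_n}(t)\to 0<\gamma$ as $t\to\infty$.

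The same observation applied to $f$ itself shows $\inf_{t\in[0,\infty)}(f(t)-y^\delta_f(t))\le 0$ for every $f\in\mathscr{D}([0,1])$ with $f(0)\in(0,1)$, so with the infimum literally taken over $[0,\infty)$ the set $D_\delta$ is empty and the lemma is only vacuously true; your proof purports to verify the sequential openness criterion non-vacuously, and that verification fails exactly at the step above. To be fair, the paper's own proof is loose at the same spot: with the metric of \cite{EthierKurtz1986}, which discounts late times exponentially, a dip of $g_n$ below $0$ occurring at a very late time $T$ only forces $d_{Sk}(g_n,g)$ to be of order $\epsilon e^{-T}$, not at least $\epsilon$. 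The intended reading, and the only one actually used in the proof of Proposition \ref{prop:stablization_bounds}, is an infimum over the finite window $t\in[0,\log(C/c)]$ (equivalently $s\in[0,1]$); with that reading the problematic case disappears, and your compact-interval estimates close the argument completely. As written, however, your treatment of the unbounded-time case is not repairable and needs to be replaced by restricting the time horizon (or by observing the vacuity), so the proposal has a genuine gap.
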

\begin{proof}
Let $d_{Sk}$ denote the usual Skorohod metric on $\mathscr{D}([0,1])$, see e.g.~equation (3.5.2) of \cite{EthierKurtz1986}. We will show that
$D'_\delta=\mathscr{D}([0,1])\sc D_\delta$
is closed. Let $f_j,f\in\mathscr{D}([0,1])$ be such that $d_{Sk}(f_j,f)\to 0$ (as $j\to\infty$) and $f_j\in D'_\delta$. It remains only to show that $f\in D'_\delta$. Note that if $f(0)=0$ or $f(0)=1$ then it is automatic that $f\in D'_\delta$, so without loss of generality consider $f(0)\in(0,1)$.

Set $g_j=f_j-y^\delta_{f_j}$ and $g=f-y^\delta_f$, both elements of $\mathscr{D}([0,1])$. Then $g_j-g=(f_j-f)+(y^\delta_{f}-y^\delta_{f_j})$. It follows from $d_{Sk}(f_j,f)\to 0$ that $f_j(0)\to f(0)\in(0,1)$, and using \eqref{eq:y_supcont} we have also that $\sup_t|y^\delta_{f_j}(t)-y^\delta_f(t)|\to 0$; hence (using the definition of $d_{Sk}$) we have that $d_{Sk}(g_j,g)\to 0$.

We have $f_j\in D'_\delta$, so $\inf_{t\in[0,\infty)} g_j(t)\leq0 $.
Suppose, aiming for a contradiction, that $f\notin D_\delta$. Then $f\in D_\delta$, so there exists $\epsilon>0$ such that $\inf_{t\in[0,\infty)}g(t)\geq \epsilon$, which implies that $d_{Sk}(g_j,g)\geq\epsilon$; but this is impossible since $d_{Sk}(g_j,g)\to 0$.
Therefore we must have $f\in D_\delta$.
\end{proof}

\begin{lemma}\label{lem:ts}
Let $t=t(s)=\log\l(\frac{C}{C-s(C-c)}\r)$. Then $t$ is a bijective transformation between $[0,1]\leftrightarrow [0,\log(C/c)]$. Moreover, for any $\epsilon>0$ we have
\begin{equation}\label{eq:Tnt}
\P\l[\sup\limits_{s\in[0,1]}\l|T^n_{s(C-c)n^\beta}-t(s)\r|>\epsilon\r]\lesssim \frac{1}{\epsilon^2}\frac{1}{4cn^{\beta}}.
\end{equation}
\end{lemma}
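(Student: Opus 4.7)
The plan is to reduce this lemma to a standard martingale concentration estimate combined with a Riemann-sum computation. Bijectivity of $t:[0,1] \to [0,\log(C/c)]$ is immediate: $t$ is continuous with $t(0)=0$ and $t(1)=\log(C/c)$, and $t'(s) = (C-c)/(C-s(C-c))>0$ for $s\in[0,1]$, so $t$ is a strictly increasing continuous bijection. For the probabilistic bound I would decompose via the triangle inequality,
\[
\sup_{s \in [0,1]} \bigl|T^n_{s(C-c)n^\beta} - t(s)\bigr|
\leq \sup_{j \leq (C-c)n^\beta} \bigl|T^n_j - \E[T^n_j]\bigr|
+ \sup_{s \in [0,1]} \bigl|\E[T^n_{\lfloor s(C-c)n^\beta\rfloor}] - t(s)\bigr|,
\]
and treat the stochastic and deterministic parts separately.

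For the stochastic term, $T^n_j$ is a sum of independent exponentials, so $M^n_j = T^n_j - \E[T^n_j]$ is a martingale in its natural filtration with $\E[(M^n_j)^2] = \sum_{l=0}^{j-1}(z^n_l)^2$. Doob's $L^2$ maximal inequality then yields
\[
\P\Bigl[\sup_{j\leq(C-c)n^\beta}|M^n_j|>x\Bigr]
\leq \frac{1}{x^2}\sum_{l=0}^{(C-c)n^\beta-1}\frac{1}{(2(Cn^\beta-l)+2)^2}.
\]
The change of variables $m=Cn^\beta-l$ transforms the variance sum into $\sum_{m=cn^\beta+1}^{Cn^\beta}(2m+2)^{-2}$, which is bounded above by $\int_{cn^\beta}^\infty (4m^2)^{-1}\,dm = 1/(4cn^\beta)$; this is the source of the factor $1/(4cn^\beta)$ appearing in \eqref{eq:Tnt}.

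The deterministic term is handled by recognizing the expectation $\E[T^n_{\lfloor s(C-c)n^\beta\rfloor}] = \sum_{l=0}^{\lfloor s(C-c)n^\beta\rfloor-1} z^n_l$ as a Riemann sum for $\int_0^{s(C-c)n^\beta} \frac{dl}{2(Cn^\beta-l)+2}$. The same change of variables $m = Cn^\beta - l$ evaluates this integral to a logarithm asymptotic (up to a harmless multiplicative constant absorbed into the normalization) to $t(s)$ as $n\to\infty$; both the Riemann discretization error and the contribution of the additive $+2$ in the denominator are uniformly $O(n^{-\beta})$ in $s$. Since this deterministic supremum is therefore $o(1)$, for any fixed $\delta>0$ and all $n$ large enough the event $\{\sup_s|T^n_{s(C-c)n^\beta}-t(s)|>\epsilon\}$ is contained in $\{\sup_j|M^n_j|>\epsilon-\delta\}$, whose probability is $\lesssim 1/\bigl((\epsilon-\delta)^2\cdot 4cn^\beta\bigr)$ by the stochastic estimate; letting $\delta\downarrow 0$ gives \eqref{eq:Tnt}. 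The main obstacle is really just bookkeeping: the sup is over continuous $s\in[0,1]$ rather than over integer $j$, but $T^n$ is a step function of its integer index and $s\mapsto t(s)$ is uniformly continuous, so the discrepancy between adjacent integer-index step values and the continuous parameter $s$ contributes only an additional $O(n^{-\beta})$ error, absorbed into the same deterministic term.
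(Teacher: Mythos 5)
Your overall strategy is the paper's: bijectivity is immediate, the supremum over continuous $s$ is reduced to the integer grid via the step-function nature of $T^n$ and uniform continuity of $t$, the fluctuation term is handled by Doob's $L^2$ maximal inequality for the martingale $M^n_j=T^n_j-\E[T^n_j]$ with the variance sum bounded by $\int_{cn^\beta}^\infty(2m)^{-2}\,dm=\frac{1}{4cn^\beta}$, and the mean is identified by a Riemann-sum computation; your $\epsilon-\delta$ bookkeeping at the end is a harmless variant of the paper's direct use of the maximal inequality at level $\epsilon$.

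The one step that cannot stand as written is the deterministic identification, where you evaluate the Riemann sum ``up to a harmless multiplicative constant absorbed into the normalization.'' There is no normalization available to absorb anything: the lemma asserts closeness to the specific function $t(s)=\log\frac{C}{C-s(C-c)}$, and this exact constant is what is used downstream (in the proof of Proposition \ref{prop:stablization_bounds}, via $e^{-2\zeta t(s)}=\bigl(\tfrac{C-s(C-c)}{C}\bigr)^{2\zeta}$ in the explicit ODE solution \eqref{eq:ytA}). So you must prove $\E\bigl[T^n_{\lfloor s(C-c)n^\beta\rfloor}\bigr]=t(s)+\mc{O}(n^{-\beta})$ uniformly in $s$, constant and all. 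This is precisely where care is needed: if you compute consistently with the convention you used for the variance, namely holding times $E^n_l$ of mean $Z^n_l=\frac{1}{2(Cn^\beta-l)+2}$, then $\sum_l Z^n_l$ is a Riemann sum for $\frac12\log\frac{C}{C-s(C-c)}=\frac12\,t(s)$, the ``harmless constant'' is a factor $\tfrac12$, the deterministic supremum is of order one rather than $o(1)$, and your concluding containment of events fails. Conversely, the normalization under which the mean comes out as $t(s)$ on the nose (effectively $\E[E_l]=\frac1l$, $\var(E_l)=\frac1{l^2}$ at the re-indexed urn time $l$, which is how the paper's own display computes the mean) would change your variance constant from $\frac{1}{4cn^\beta}$ to $\frac{1}{cn^\beta}$. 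In other words, the mean and variance computations in your write-up quietly live in normalizations differing by exactly the factor you waved away; to complete the proof you must fix one convention for the $E^n_l$, verify the mean matches $t(s)$ up to uniform $\mc{O}(n^{-\beta})$ under it, and carry the corresponding constant through the variance bound, rather than declaring the constant harmless.
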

\begin{proof}
The first claim is trivial. For the second, recall Remark \ref{rem:Cinteger}, and note that $t(s)$ is a uniformly continuous function of $s\in[0,1]$.
Thus, it suffices to prove \eqref{eq:Tnt} with the supremum over $s$ restricted to $s^n_j=\frac{j}{(C-c)n^\beta}$ for $j=0,1,\ldots,(C-c)n^\beta$. For such $j$ we have
$T^n_{s^n_j}=\sum_{l=Cn^\beta-s_j(C-c)n^\beta+1}^{Cn^\beta} E^n_l.$
Since the $(E^n_l)$ are independent it follows that
$M^n_j=T^n_{s^n_j}-\E[T^n_{s^n_j}]$
is a square integrable martingale (with parameter $j$, with respect to its generated filtration).
The maximal inequality gives $\P[\sup_j|M^n_j|>\epsilon]\leq \epsilon^{-2}\E[(M^n_J)^2]$ where $J=(C-c)n^\beta-1$. Using independence we have
\begin{align*}
\E\l[(M^n_J)^2\r]
=\sum\limits_{l=cn^\beta+1}^{Cn^\beta}\var(E_l)
=\sum\limits_{l=cn^\beta+1}^{Cn^\beta}\frac{1}{l^2}
\leq \int_{cn^\beta}^{\infty}\frac{1}{(2l)^2}\,dl
\leq \frac{1}{4cn^\beta}.
\end{align*}
Moreover,
$$\E[T^n_{s^n_j}]=\sum\limits_{l=Cn^\beta-s_j^n(C-c)n^\beta+1}^{Cn^\beta}\frac{1}{l}=\int_{Cn^\beta-s_j^n(C-c)n^\beta}^{Cn^\beta}\frac{1}{2l}\,dl+\mc{O}(n^{-\beta})$$
from which we obtain $\E[T^n_{s^n_j}]=t(s^n_j)+\mc{O}(n^{-\beta})$. The result follows.
\end{proof}

\begin{proof}[Of Proposition \ref{prop:stablization_bounds}.]
We have $C>\xi^{1/\zeta}$. We will show first that 
\begin{equation}\label{eq:Cstar2}
\P\l[Y^n_0\geq \frac{\zeta}{4C^\zeta}\r]\geq \frac{\zeta^2}{4\xi}.
\end{equation}
We have $Y^n_0=\frac{1}{2Cn^\beta+2}H^n_{Cn^\beta}$.
From the Paley-Zygmund inequality and Lemma \ref{lem:br_end_H} we thus have
\begin{align*}
\P\l[Y^n_0\geq \frac12\E[Y^n_0]\r]
\,\geq \,
\frac14\frac{(\E[H^n_{Cn^\beta}])^2}{\E[(H^n_{Cn^\beta})^2]}
\,\gtrsim\, 
\frac14\frac{\frac{4\zeta^2}{C^{2\zeta}}(1-\frac{\xi}{8\zeta C^\zeta})^2}{\xi/C^{2\zeta}}
\,\geq \,
\frac14\frac{\zeta^2(\frac12)^2}{\xi}
\,=\,
\frac{\zeta^2}{4\xi}.
\end{align*}
Note that in the final inequality above we also used that $C>\xi^{1/\zeta}$ implies $1-\frac{\xi}{8\zeta C^\zeta}\geq\frac12$.
Similarly, from Lemma \ref{lem:br_end_H} we have 
$\E[Y^n_0]\gtrsim \frac12\frac{2\zeta}{C^\zeta}(1-\frac{\xi}{4\zeta C^\zeta})\geq \frac{\zeta}{2C^\zeta}$, 
so we have that
$\P[Y^n_0\geq \frac12\E[Y^n_0]]\leq \P[Y^n_0\geq \frac{\zeta}{4C^\zeta}]$,
from which \eqref{eq:Cstar2} follows.

Equation \eqref{eq:Cstar2} is all that we know about the $Y^n_0$, so we now look to gain some `artificial' control over the initial conditions used in the limit.
To this end, independently of all else, let $(I_n)$ be a sequence of independent $\{0,1\}$ valued random variables such that
$$\P[I_n=1]=\frac{\zeta^2/4\xi}{\P\l[Y^n_0\geq \frac{\zeta}{4C^\zeta}\r]}.$$
Define a set $\mc{M}\sw\mc{U}_{Cn^\beta}$ as follows:
\begin{itemize}
\item If $\frac{1}{Cn^\beta}H^n_{Cn^\beta}\geq \frac{\zeta}{C^\zeta}$ and $I_n=1$, then let $\mc{M}$ be a uniformly random subset of $\mc{H}^n_{Cn^\beta}$ of size $\frac{\zeta}{C^{\zeta-1}}n^\beta$ (which we will assume to be an integer c.f.~Remark \ref{rem:Cinteger}).
\item Otherwise, let $\mc{M}$ be the empty set.
\end{itemize}
For $j=0,\ldots,(C-c)n^\beta$ we define
\begin{equation}\label{eq:H_hat}
\hat{\mc{H}}^n_{Cn^\beta-j}=\mc{H}^n_{Cn^\beta-j}\cap\l(\bigcup_{b\in\mc{M}}b^\downarrow\r).
\end{equation}
In words, to define $\hat{\mc{H}}^n_{Cn^\beta-j}$ we artificially remove all balls that are not within $\mc{M}$ from the genealogy at time $Cn^\beta$, and from that point onwards (looking backwards in time) we only include balls that were potential ancestors of balls in $\mc{M}$.

We define $\hat{H}^n_{Cn^\beta-j}=|\hat{\mc{H}}^n_{Cn^\beta-j}|$, define $\hat{Y}^n_j$ using \eqref{eq:Ynj_def} with $\hat{H}^n_j$ in place of $H^n_j$, define $\hat{Z}^n_j=Z^n_j$, and define $\hat{X}^n_j=(\hat{Y}^n_j,\hat{Z}^n_j)$.
It is immediate that all these quantities evolve according to the same dynamics as their counterparts without $\hat{\cdot}$s (but with different initial conditions for the first coordinate).
Moreover, \eqref{eq:H_hat} implies that $\hat{H}^n_{Cn^\beta-j}\leq H^n_{Cn^\beta-j}$ and, consequently,
\begin{equation}\label{eq:YYj}
\hat{Y}^n_j\leq Y^n_j
\end{equation}
for all $j$. From the definition of $\mc{M}$ we have that
\begin{equation*}
\hat{Y}^n_0=
\begin{cases}
\frac{\zeta}{4C^\zeta} & \text{ with probability }\frac{\zeta^2}{4\xi} \\
0 & \text{ otherwise.}
\end{cases}
\end{equation*}

Recall the processes $\mc{X}^n_t=(\mc{Y}^n_t,\mc{Z}^n_t)$ and $\mc{X}_t=(\mc{Y}_t,\mc{Z}_t)$. Take their initial states to be
\begin{align}
\mc{X}^n_0=\hat{X}^n_0&=
\begin{cases}
(\frac{\zeta}{4C^\zeta},\frac{1}{2Cn^\beta+2}) & \text{ with probability }\frac{\zeta^2}{4\xi}\\
(0,0) & \text{ otherwise}.
\end{cases} \label{eq:Xn0}\\
\mc{X}_0&=
\begin{cases}
(\frac{\zeta}{4C^\zeta},0) & \text{ with probability }\frac{\zeta^2}{4\xi}\\
(0,0) & \text{ otherwise}. \label{eq:X0}
\end{cases}
\end{align}
Since we have $\mc{X}^n_0=\hat{X}^n_0$ it follows from Lemma \ref{lem:XXcouple} that we can couple $\mc{X}^n_\cdot$ and $\hat{X}^n_\cdot$ in such a way that for all $j=0,1,\ldots,(C-c)n^\beta$, $\hat{X}^n_j=\mc{X}^n_{T^n_j}$.
Hence, in particular
\begin{equation}\label{eq:YYtj}
\hat{Y}^n_j=\mc{Y}^n_{T^n_j}.
\end{equation}
By \eqref{eq:Xn0} and \eqref{eq:X0} we have that $\mc{X}^n_0$ converges to $\mc{X}_0$. 
Hence, by Lemma \ref{lem:Xt_conv}, $\mc{X}^n_t$ converges weakly to $\mc{X}_t$ in $\mathscr{D}([0,1]^2)$, and thus $\mc{Y}^n_t$ converges weakly to $\mc{Y}_t$ in $\mathscr{D}([0,1])$.

Let $\delta>0$, to be chosen later, and recall the set $D_\delta$ from Lemma \ref{lem:portmanteau_prep}.
The path $t\mapsto \mc{Y}_t$ has dynamics given by \eqref{eq:stablization_ODE}, and with probability $\frac{\zeta^2}{4\xi}$ has initial condition within $(0,1)$; on this event we have $\mc{Y}_\cdot\in D_\delta$.
Thus, by Lemma \ref{lem:portmanteau_prep} and the Portmanteau theorem we have
\begin{equation}\label{eq:YnDdelta}
P[\mc{Y}^n_\cdot\in D_\delta] \gtrsim\frac{\zeta^2}{4\xi}.
\end{equation}

On the event $\mc{Y}^n_\cdot\in D_\delta$, we have $\mc{Y}^n_0\in(0,1)$, which by the definition of $\mc{Y}^n_0$ in \eqref{eq:Xn0} implies that $\mc{Y}^n_0=\frac{\zeta}{C^\zeta}$.
Moreover, 
on this event we have $\inf_t \mc{Y}^n_t-y(t-\delta;\frac{\zeta}{C^\zeta})>0$ and in particular,
\begin{equation}\label{eq:yTns_pre0}
\inf_t \mc{Y}^n_t-y(t-\delta;\tfrac{\zeta}{C^\zeta})>0,
\end{equation}
where $t=t(s)$ is as in Lemma \ref{lem:ts}, and $s\in[0,1]$.
By Lemma \ref{lem:ts} we have that
\begin{equation}\label{eq:Ttn_error}
\P\l[\sup_{s\in[0,1]}|T^n_{s(C-c)n^\beta}-t(s)|\leq\epsilon\r]\gtrsim 1-\frac{1}{\epsilon^2}\frac{1}{cn^\beta},
\end{equation}
where $\epsilon>0$ is to be chosen later.
Conditioning also on the event in \eqref{eq:Ttn_error}, and recalling that $t\mapsto y(t-\delta;\frac{\zeta}{C^\zeta})$ is increasing, we obtain from \eqref{eq:yTns_pre0} that, for all $s\in[0,1]$,
\begin{equation}\label{eq:yTns_pre1}
\mc{Y}^n_{T^n_{s(C-s)n^\beta}}-y(t(s)-\delta-\epsilon;\tfrac{\zeta}{C^\zeta})>0.
\end{equation}
Using \eqref{eq:ytA} and \eqref{eq:YYtj}, equation \eqref{eq:yTns_pre1} becomes
\begin{equation*}
\hat{Y}^n_{s(C-c)n^\beta}>\frac{\zeta/C^\zeta}{\zeta/C^\zeta-(\zeta/C^\zeta-1)\frac{(C-s(C-c))^{2\zeta}}{C^{2\zeta}}e^{2\zeta(\delta+\epsilon)}}
\end{equation*}
Choose $\delta=\epsilon=\frac{c}{4\zeta C}$, and after a short calculation we obtain
\begin{align}
\hat{Y}^n_{s(C-c)n^\beta}
&>\frac{\zeta}{\zeta+e^{c/C}(C-s(C-c))^{2\zeta}}.\label{eq:yTns_pre2}
\end{align}
To sum up, after accounting for the error terms incurred by conditioning on the events in \eqref{eq:YnDdelta} and \eqref{eq:Ttn_error}, we have that \eqref{eq:yTns_pre2} holds with probability $\gtrsim \frac{\zeta^2}{4\xi}$.
Proposition \ref{prop:stablization_bounds} follows immediately from this result and equation \eqref{eq:YYj}.
\end{proof}



\section{Affine preferential attachment and addition of multiple edges}
\label{sec:general_addition}

Several authors, dating at least as far back as \cite{DorogovtsevEtAl2000}, allow an extra parameter $\alpha$,
which controls the extent to which new vertices prefer to attach to existing high degree vertices.
In the classical model, the effect of $\alpha$ is that
when a new edge samples which vertex to attach to,
the existing vertices are weighted according to $\alpha+\deg_n(v)$, instead of just $\deg_n(v)$.
This mechanism is sometimes known as `affine' preferential attachment.
In PAC, we may apply the same mechanism to the sampling of potential parents.

The corresponding modification of the urn process in Section \ref{sec:urn_coupling} is that
each source ball is assigned activity $1+\alpha$, whilst cue balls have activity $1$.
Here, activity is meant in sense of (drawing balls from) generalized P\'{o}lya urns; a ball with activity $a>0$ is drawn with probability proportional to $a$.
If $\alpha$ is an integer, then at the level of the urn process this mechanism is equivalent to adding $\alpha$ new source balls,
all of colour $F_n$, on the $n^{th}$ step of the process.
For the Galton-Watson process of Section \ref{sec:gw_coupling}, the effect is that the probability of $\frac12$ for $\v{p}$ to be a source ball is replaced by $\frac{1+\alpha}{1+(1+\alpha)}$, corresponding to the idea that source balls have activity $1+\alpha$ and cue balls have activity $1$.  The offspring distribution \eqref{eq:gw_offspring} is thus modified to
\begin{equation}\label{eq:gw_offspring_alpha}
\P[M=m]=
\begin{cases}
\frac{1+\alpha}{2+\alpha} & \text{ if }m=0\\
\frac{1}{2+\alpha}\P[R=m] & \text{ if }m\in\N.
\end{cases}
\end{equation}
With these modifications, the coupling described in Section \ref{sec:gw_coupling} carries over. Now, the Galton-Watson process is supercritical when $\E[M]=\frac{1}{2+\alpha}\E[R]>1$, so the appropriate modification of Corollary \ref{cor:condensation} is that condensation now occurs if $\E[R]>2+\alpha$.
With \eqref{eq:gw_offspring_alpha} in place of \eqref{eq:gw_offspring}, equation \eqref{eq:limiting_colour} continues to hold.

An alternative, and equally natural, extension is to allow new vertices to connect to more than one existing vertex.
Models of this type are considered by, for example, \cite{BianconiBarabasi2001} and \cite{DereichOrtgiese2014}.
In PAC, we may permit each new vertex $v_n$ to connect to a random number $V_n$ of existing vertices,
with each such vertex sampled independently according to PAC mechanism.
We may also allow the sequence $(V_n)$ to be random; for simplicity we will assume it is an i.i.d.~sequence.
For the urn process associated to PAC, this means that on the $n^{th}$ step of time we would add $V_n$ new source balls, all of the same colour,
plus $V_n$ new cue balls whose colours would be inherited independently of each other using the usual mechanism.
In this case, the balance of source balls versus cue balls remains exactly even,
with the result that Corollary \ref{cor:condensation} requires no modification.
Note that, in order to obtain this result (in particular, to carry over Lemma \ref{lem:Kn_control}) we must assume that the i.i.d.~random variables $V_n$ have finite expectation.

\medskip

\noindent
\textbf{Acknowledgement.}
We thank the anonymous referee, for extensive comments that greatly improved the presentation of the article.

\appendix

\section{Appendix}
\label{app:key_asym}
\numberwithin{equation}{section}

We give a proof of Lemma \ref{lem:key_asym}.
The case $\gamma_j=0$ can be found within Exercise 8.3 of \cite{Hofstad2016}, and may be established using the Gamma function and Stirling's inequality. 
We give an elementary argument which also covers $\gamma_j\neq 0$.

Let us first consider the case in which $\gamma_j=0$ for all $j$.
In this case, Lemma \ref{lem:key_asym} is implied by the following inequality: for all $\alpha>0$ and all $k,n\in\N$ such that $2\alpha+1< k\leq n<\infty$ it holds that
\begin{equation}\label{eq:key_lemma_A}
\l(\frac{n}{k}\r)^{\alpha}\l(1-\frac{\alpha+\alpha^2}{k}\r)\,\leq\, \prod\limits_{j=k}^n\l(1+\frac{\alpha}{j}\r)\,\leq\, \l(\frac{n}{k}\r)^{\alpha}\l(1+\frac{1}{n}\r)^\alpha.
\end{equation}
The proof of \eqref{eq:key_lemma_A} proceeds as follows. We first note that
\begin{equation}\label{eq:key_lemma_B}
\prod\limits_{j=k}^n\l(1+\frac{\alpha}{j}\r)=\exp\l(\sum\limits_{j=k}^n\log\l(1+\frac{\alpha}{j}\r)\r).
\end{equation}
We use the inequality $\log(1+x)\leq x$ to obtain, for the upper bound,
\begin{align*}
\eqref{eq:key_lemma_B}
\leq \exp\l(\sum\limits_{j=k}^n\frac{\alpha}{j}\r)
\leq \exp\l(\int_k^{n+1}\frac{\alpha}{j}\,dj\r)
=\l(\frac{n}{k}\r)^\alpha\l(1+\frac{1}{n}\r)^\alpha.
\end{align*}
Here, we use that $\frac{1}{j}$ is a decreasing function of $j$, to bound the $\sum$ with an $\int$. This establishes the upper bound claimed in \eqref{eq:key_lemma_A}. For the lower bound, we use that $x-\frac{x^2}{2}\leq \log(1+x)$, to obtain
\begin{align*}
\eqref{eq:key_lemma_B}
\geq \exp\l(\sum\limits_{j=k}^n \frac{\alpha}{j}-\frac{\alpha^2}{2j^2}\r)
\geq \exp\l(\int_{k-1}^n \frac{\alpha}{j}-\frac{\alpha^2}{2j^2}\,dk\r)
=\l(\frac{n}{k}\r)^\alpha\l(1-\frac{1}{k}\r)^\alpha\l(1-\frac{\alpha^2}{2(k-1)}\r).
\end{align*}
Here, we use that $x-\frac{x^2}{2}$ is an increasing function of $x\in(0,\frac12)$, along with the assumption that $2\alpha+1< k$ to ensure that $\frac{\alpha}{k}\in(0,\frac12)$ and $k>1$. For the final line, we use that $e^{-x}\leq 1-x$ for $x\geq 0$. 
To finish we note that 
$(1-\frac{1}{k})^\alpha(1-\frac{\alpha^2}{2(k-1)})\geq (1-\frac{\alpha}{k})(1-\frac{\alpha^2}{k})\geq 1-\frac{\alpha+\alpha^2}{k}$ by \eqref{eq:taylor}.
Thus, we have established both sides of \eqref{eq:key_lemma_A}.

In the general case in which $\sum_j|\gamma_j|<\infty$, the same calculations as above results in multiplication by an additional term containing $\exp(\pm\sum_{j=k}^n|\gamma_j|)$, after the integration step. 
These terms have no effect on the asymptotic behaviour (up to first order as $k,n\to\infty$) because $\sum_{j=k}^n|\gamma_j|\to 0$ as $k,n\to\infty$.

\bibliographystyle{plainnat}
\bibliography{scalefree}

\end{document}